\def\beq{\begin{equation}}
\def\eeq{\end{equation}}
\def\E{\mathbb{E}}
\def\R{\mathbb{R}}
\newtheorem{assumption}{Assumption}
\newtheorem{example}{Example}
\newcommand{\half}{\mbox{${1 \over 2}$}}
\def\ba{\begin{array}}
\def\ea{\end{array}}
\def\beann{\begin{eqnarray*}}
\def\eeann{\end{eqnarray*}}
\def\bea{\begin{eqnarray}}
\def\eea{\end{eqnarray}}
\def\BT{\begin{theorem}}
\def\ET{\end{theorem}}
\def\BL{\begin{lemma}}
\def\EL{\end{lemma}}
\def\BC{\begin{corollary}}
\def\EC{\end{corollary}}
\def\BE{\begin{example}}
\def\EE{\end{example}}
\def\BD{\begin{definition}}
\def\ED{\end{definition}}
\def\BR{\begin{remark}}
\def\ER{\end{remark}}
\def\BAS{\begin{assumption}}
\def\EAS{\end{assumption}}
\def\BI{\begin{itemize}}
\def\EI{\end{itemize}}
\def\BMP{\begin{minipage}{9.5cm}}
\def\EMP{\end{minipage}}
\def\MPT{\begin{minipage}{11.5cm}}
\def\EPT{\end{minipage}}
\def\la{\langle}
\def\ra{\rangle}
\newtheorem{remark}[theorem]{Remark}
\title{Tensor Methods for Minimizing Convex Functions with H\"{o}lder Continuous Higher-Order Derivatives}
\author{G.N. Grapiglia\thanks{Departamento de Matem\'atica, Universidade Federal do Paran\'a, Centro Polit\'ecnico, Cx. postal 19.081, 81531-980, Curitiba, Paran\'a, Brazil (grapiglia@ufpr.br). This author was supported by the National Council for Scientific and Technological Development - Brazil (grants 401288/2014-5 and 406269/2016-5) and by the European Research Council (ERC) under the European Union’s Horizon 2020 research and innovation programme (grant agreement No. 788368).}
        \and Yu. Nesterov\thanks{Center for Operations Research and Econometrics (CORE), Catholic University of Louvain
(UCL), 34 voie du Roman Pays, 1348 Louvain-la-Neuve,
Belgium (Yurii.Nesterov@uclouvain.be). This project has received funding from the European Research Council (ERC) under the European Union’s Horizon 2020 research and innovation programme (grant agreement No. 788368).}}
\date{November 28, 2019}
\begin{document}

\maketitle

\begin{abstract}
In this paper we study $p$-order methods for unconstrained minimization of convex functions that are $p$-times differentiable ($p\geq 2$) with $\nu$-H\"{o}lder continuous $p$th derivatives. We propose tensor schemes with and without acceleration. For the schemes without acceleration, we establish iteration complexity bounds of $\mathcal{O}\left(\epsilon^{-1/(p+\nu-1)}\right)$ for reducing the functional residual below a given $\epsilon\in (0,1)$. Assuming that $\nu$ is known, we obtain an improved complexity bound of $\mathcal{O}\left(\epsilon^{-1/(p+\nu)}\right)$ for the corresponding accelerated scheme. For the case in which $\nu$ is unknown, we present a universal accelerated tensor scheme with iteration complexity of $\mathcal{O}\left(\epsilon^{-p/[(p+1)(p+\nu-1)]}\right)$. A lower complexity bound of $\mathcal{O}\left(\epsilon^{-2/[3(p+\nu)-2]}\right)$ is also obtained for this problem class.

\end{abstract}

\begin{keywords}
unconstrained minimization, high-order
methods, tensor methods, H\"older condition, worst-case global complexity
bounds
\end{keywords}

\begin{AMS}
49M15, 49M37, 58C15, 90C25, 90C30
\end{AMS}

\pagestyle{myheadings}
\thispagestyle{plain}
\markboth{Tensor Methods for Minimizing Convex Functions}{G.N. Grapiglia and Yu. Nesterov}

\section{Introduction}
\setcounter{equation}{0}

\subsection{Motivation}
In \cite{NP}, it was shown that a suitable cubic
regularization of the Newton method (CNM) takes at most
$\mathcal{O}(\epsilon^{-1/2})$ iterations to reduce the
functional residual below a given precision $\epsilon>0$ when the objective is a twice-differentiable convex
function with a Lipschitz continuous Hessian. A better
complexity bound of $\mathcal{O}(\epsilon^{-1/3})$ was
shown in \cite{NES3} for an accelerated version of CNM.
Auxiliary problems in these methods consist in the
minimization of a third-order regularization of the
second-order Taylor approximation of the objective
function around the current i\-te\-ra\-te. A natural
generalization is to consider auxiliary problems in which
one minimizes a $(p+1)$-order regularization of the $p$th-order Taylor approximation of the objective function,
resulting in tensor methods. Unconstrained
optimization by tensor methods is not a new subject (see,
for example, \cite{SCH,BOU}). In the context of convex
optimization, accelerated tensor methods (as
described above) were first considered by Baes
\cite{BAES}. However, the author did not realize that
under a proper choice of the regularization coefficient
the auxiliary problems become convex. This important
observation was done in a recent paper \cite{NES6}, where
tensor methods with and wi\-thout
acce\-le\-ra\-tion were proposed for unconstrained
minimization of $p$-times differentiable convex functions
with Lipschitz continuous $p$th derivatives. An
iteration complexity bound of
$\mathcal{O}(\epsilon^{-1/p})$ was proved for the method
without acceleration, while an improved bound of
$\mathcal{O}(\epsilon^{-1/(p+1)})$ was proved for the
accelerated tensor method.

In the present paper, we study tensor methods (with and without acceleration) that can handle convex functions with $\nu$-H\"{o}lder continuous $p$th derivatives ($p\geq 2$) and allow the inexact solution of auxiliary problems (in the sense of [4]). Specifically, our contribution is threefold:
\begin{enumerate}
\item For the schemes without acceleration, we establish iteration complexity bounds of $\mathcal{O}\left(\epsilon^{-1/(p+\nu-1)}\right)$ for reducing the functional residual below a given $\epsilon\in (0,1)$.
\item[]
\item Assuming that $\nu$ is known, we obtain an improved complexity bound of $\mathcal{O}\left(\epsilon^{-1/(p+\nu)}\right)$ for the corresponding accelerated scheme. For the case in which $\nu$ is unknown, we present a \textit{universal} accelerated tensor scheme with iteration complexity of $\mathcal{O}\left(\epsilon^{-p/[(p+1)(p+\nu-1)]}\right)$.
\item[]
\item A lower complexity bound of $\mathcal{O}\left(\epsilon^{-2/[3(p+\nu)-2]}\right)$ is also obtained, from which we conclude that our accelerated nonuniversal scheme is nearly optimal.
\item[]
\end{enumerate}

The methods and results presented here extend in a significant way the contributions in \cite{BAES,GN,GN2,NES6}. Indeed, \cite{GN,GN2} deal only with second-order schemes ($p=2$) which require the exact solution of the auxiliary problems. On the other hand, the $p$-order methods proposed in \cite{BAES,NES6} are restricted to the Lipschitz case ($\nu=1$), assuming that the Lipschitz constant is known and that the auxiliary problems are solved exactly. We believe that the development of $p$-order methods with affordable trial steps and automatic adjustment to the objective's function class (universality) constitutes a fundamental step towards implementable high-order methods for convex optimization. 

\subsection{Contents}
The paper is organized as follows. In Section 2, we define
our problem. In Section 3, we present tensor
methods without acceleration and establish their
convergence properties. In Section 4, we present complexity
results for accelerated schemes. Finally, in Section 5 we
obtain lower complexity bounds for tensor methods under
the H\"{o}lder condition. All necessary auxiliary results
are included in Appendix A.

\subsection{Notations and Generalities}

In what follows, we denote by $\E$ a finite-dimensional
real vector space, and by $\E^*$ its {\em dual} space,
composed by linear functionals on $\E$. The value of
function $s \in \E^*$ at point $x \in \E$ is denoted by
$\la s, x \ra$. Given a self-adjoint positive definite
operator $B:\E \to \E^*$ (notation $B \succ 0$), we can
endow these spaces with conjugate Euclidean norms:
$$
\ba{rcl}
\| x \| & = & \la B x, x \ra^{1/2},\; x\in \E, \quad \| s
\|_* \; = \; \la s, B^{-1} s \ra^{1/2}, \; s \in \E^*.
\ea
$$
For a smooth function $f:\text{dom}\,f\to \R$ with convex and
open domain $\text{dom}\,f\subset\E$, denote by $\nabla f(x)$
its gradient and by $\nabla^{2}f(x)$ its Hessian evaluated at
point $x\in\text{dom}\,f$. Note that $\nabla f(x)\in\E^{*}$ and
$\nabla^{2}f(x)h\in\E^{*}$ for $x\in\text{dom}\,f$ and $h\in\E$.

For any integer $p\geq 1$, denote by
\begin{equation*}
D^{p}f(x)[h_{1},\ldots,h_{p}]
\end{equation*}
the directional derivative of function $f$ at $x$ along directions
$h_{i}\in\E$, $i=1,\ldots,p$. In particular, for any $x\in\text{dom}\,f$ and $h_{1},h_{2}\in\E$ we have
\begin{equation*}
Df(x)[h_{1}]=\langle\nabla f(x),h_{1}\rangle\quad\text{and}\quad D^{2}f(x)[h_{1},h_{2}]=\langle\nabla^{2}f(x)h_{1},h_{2}\rangle.
\end{equation*}
For $h_{1}=\ldots=h_{p}=h\in\E$, we use notation
$D^{p}f(x)[h]^{p}$. Then the $p$th-order Taylor
approximation of function $f$ at $x\in\text{dom}\,f$ can
be written as follows:
\begin{equation}
f(x+h)=\Phi_{x,p}(x+h)+o(\|h\|^{p}),\,\,x+h\in\text{dom}\,f,
\label{eq:1.1}
\end{equation}
where
\begin{equation}
\Phi_{x,p}(y)\equiv f(x)+\sum_{i=1}^{p}\dfrac{1}{i!}D^{i}f(x)[y-x]^{i},\,\,y\in\E.
\label{eq:1.2}
\end{equation}
Note that $D^{p}f(x)[\,.\,]$ is a symmetric $p$-linear
form. Its norm is defined by
\begin{equation*}
\|D^{p}f(x)\|=\max_{h_{1},\ldots,h_{p}}\left\{\left|D^{p}f(x)[h_{1},\ldots,h_{p}]\right|\,:\,\|h_{i}\|\leq 1,\,i=1,\ldots,p\right\}.
\end{equation*}
In fact, it can be shown that (see, e.g., \cite{BAN})
\begin{equation*}
\|D^{p}f(x)\|=\max_{h}\left\{\left|D^{p}f(x)[h]^{p}\right|\,:\,\|h\|\leq 1\right\}.
\end{equation*}
Similarly, since
$D^{p}f(x)[.\,,\ldots,\,.]-D^{p}f(y)[.,\ldots,.]$ is also
a symmetric $p$-linear form for fixed
$x,y\in\text{dom}\,f$, we can define
 \begin{equation*}
\|D^{p}f(x)-D^{p}f(y)\|=\max_{h}\left\{\left|D^{p}f(x)[h]^{p}-D^{p}f(y)[h]^{p}\right|\,:\,\|h\|\leq 1\right\}.
\end{equation*}

\section{Problem Statement}

In this paper we consider methods for solving the following minimization problem
\begin{equation}
\min_{x\in\E}\,f(x),
\label{eq:2.1}
\end{equation}
where $f:\E\to\R$ is a convex $p$-times differentiable
function ($p\geq 2$). We assume that there exists at least one optimal
solution $x^{*}\in\E$ for problem (\ref{eq:2.1}). Let us
characterize the level of smoothness of the objective $f$
by the system of H\"{o}lder constants
\begin{equation}
H_{f,p}(\nu)\equiv\sup_{x,y\in\E}\left\{\dfrac{\|D^{p}f(x)-D^{p}f(y)\|}{\|x-y\|^{\nu}}\,:\,x\neq y\right\},\,\,0\leq \nu\leq 1.
\label{eq:2.3}
\end{equation}
Then, from (\ref{eq:2.3}) and from the integral form of
the mean-value theorem, it follows that
\begin{equation}
|f(y)-\Phi_{x,p}(y)|\leq\dfrac{H_{f,p}(\nu)}{p!}\|y-x\|^{p+\nu},
\label{eq:2.4}
\end{equation}
\begin{equation}
\|\nabla f(y)-\nabla\Phi_{x,p}(y)\|_{*}\leq\dfrac{H_{f,p}(\nu)}{(p-1)!}\|y-x\|^{p+\nu-1},
\label{eq:2.5}
\end{equation}
for all $x,y\in\E$. Given $x\in\E$, if $H_{f,p}(\nu)<+\infty$ and $H\geq H_{f,p}(\nu)$, by (\ref{eq:2.4}) we have
\begin{equation}
f(y)\leq \Phi_{x,p}(y)+\dfrac{H}{p!}\|y-x\|^{p+\nu},\,\,y\in\E.
\label{eq:2.6}
\end{equation}
This property motivates the use of the following class of models of $f$ around $x\in\E$:
\begin{equation}
\Omega_{x,p,H}^{(\alpha)}(y)=\Phi_{x,p}(y)+\dfrac{H}{p!}\|y-x\|^{p+\alpha},\,\,\alpha\in [0,1].
\label{eq:2.7}
\end{equation}
In particular, as long as $H\geq H_{f,p}(\nu)$, by
(\ref{eq:2.6}) we have
\begin{equation}
f(y)\leq \Omega_{x,p,H}^{(\nu)}(y),\quad y\in\E.
\label{eq:2.8}
\end{equation}

\section{Tensor schemes without acceleration}

If we assume that $H_{f,p}(\nu)<+\infty$ for some $\nu\in
[0,1]$, there are two possible situations: either $\nu$ is
known, or $\nu$ is unknown. We cover both cases in a
single framework by introducing parameter
\begin{equation}
\alpha=\left\{\begin{array}{ll} \nu,&\text{if}\,\,\nu\,\,\text{is known},\\
                                1, &\text{if}\,\,\nu\,\,\text{is unknown}.
              \end{array}
       \right.
\label{eq:3.1}
\end{equation}
Let $\epsilon\in (0,1)$ be the target precision. At the
beginning of the $t$th iteration one has an estimate
$x_{t}$ for the solution of (\ref{eq:2.1}) and a scaling
coefficient $M_{t}$. A trial point $x_{t}^{+}$ is computed
as an approximate solution to the auxiliary
problem
\begin{equation}
\min_{y\in\E}\,\Omega_{x_{t},p,M_{t}}^{(\alpha)}(y),
\label{eq:3.2}
\end{equation}
with $\alpha$ given by (\ref{eq:3.1}). Similarly to \cite{Birgin}, the trial point $x^{+}_{t}$ must satisfy the following conditions:
\begin{equation}
\Omega_{x_{t},p,M_{t}}^{(\alpha)}(x_{t}^{+})\leq f(x_{t})\quad\text{and}\quad \|\nabla\Omega_{x_{t},p,M_{t}}^{(\alpha)}(x_{t}^{+})\|_{*}\leq\theta\|x_{t}^{+}-x_{t}\|^{p+\alpha-1},
\label{eq:3.3}
\end{equation}
where $\theta\geq 0$ is a user-defined parameter. When (\ref{eq:3.2}) is not convex, then $x_{t}^{+}$ is not necessarily an approximation of its global solution. If the descent condition
\begin{equation}
f(x_{t})-f(x_{t}^{+})\geq\dfrac{1}{8(p+1)!M_{t}^{\frac{1}{p+\alpha-1}}}\|\nabla f(x_{t}^{+})\|_{*}^{\frac{p+\alpha}{p+\alpha-1}},
\label{eq:3.5}
\end{equation}
holds, then $x_{t}^{+}$ is accepted and we define
$x_{t+1}=x_{t}^{+}$. Otherwise, constant $M_{t}$ is
increased until the corresponding trial point $x_{t}^{+}$
is accepted. We will see that this process is well defined
in the sense that there exists $M_{\nu}>0$ such that
$M_{t}\leq M_{\nu}$ for all $t$. This general scheme can
be summarized in the following way.
\\[0.1cm]
\begin{mdframed}
\noindent\textbf{Algorithm 1. Tensor Method}
\\[0.2cm]
\noindent\textbf{Step 0.} Choose $x_{0}\in\E$ and $\theta\geq 0$.  Set $\alpha$ by (\ref{eq:3.1}) and $t:=0$.\\
\noindent\textbf{Step 1.} Find $0<M_{t}\leq M_{\nu}$ such that (\ref{eq:3.5}) holds for an approximate solution $x_{t}^{+}$ to (\ref{eq:3.2}) satisfying conditions (\ref{eq:3.3}).\\
\noindent\textbf{Step 2.} Set $x_{t+1}=x_{t}^{+}$.\\
\noindent\textbf{Step 3.} Set $t:=t+1$ and go back to Step 1.
\end{mdframed}

\begin{remark}
Regarding the approximate solution of the auxiliary problems, it is easy to see that $x_{t}^{+}$ satisfying (3.3) can be computed by any monotone optimization scheme that drives the gradient of the objective to zero. In \cite{GN3}, we investigated the possible use of gradient methods with Bregman distance for the case $p=3$ and $\alpha=\nu$. Specifically, under assumptions H1 and H2 below, we showed that if $\left\{y_{k}\right\}_{k=0}^{T}$ is a sequence generated by these methods applied to $\min_{y\in\mathbb{E}}\Omega_{x_{t},3,M_{t}}^{(\nu)}(y)$ with 
\begin{equation*}
\|\nabla f(y_{k})\|_{*}>\epsilon\quad\text{and}\quad\|\nabla\Omega_{x_{t},3,M_{t}}^{(\nu)}(y_{k})\|_{*}>\theta\|y_{k}-x_{0}\|^{2+\nu},\quad\text{for}\,\, k=0,\ldots,T,
\end{equation*}
then
\begin{equation*}
T\leq\left\{\begin{array}{ll} \mathcal{O}\left(\epsilon^{-(3+\nu)}\right),&\text{if}\,\,M_{t}<[6/(3+\nu)]H_{f,3}(\nu)\,\,\text{and}\,\,\nu\neq 0,\\
                              \mathcal{O}\left(\epsilon^{-\frac{(3+\nu)}{2}}\right),&\text{if}\,\,M_{t}=[6/(3+\nu)]H_{f,3}(\nu)\,\,\text{and}\,\,\nu\neq 0,\\
                              \mathcal{O}\left(\log(\epsilon^{-1})\right),&\text{if}\,\,M_{t}>[6/(3+\nu)]H_{f,3}(\nu).\\
            \end{array}
      \right.
\end{equation*}
For more details, see Theorems 3.8 and 3.10 in \cite{GN3}.
\end{remark}

\noindent To analyze the convergence of Algorithm 1, we introduce the
following assumptions:
\begin{itemize}
\item[\textbf{H1}] $H_{f,p}(\nu)<+\infty$ for some $\nu\in [0,1]$.
\item[\textbf{H2}] The level sets of $f$ are bounded, that is, $\max_{x\in \mathcal{L}(x_{0})}\|x-x^{*}\|\leq D_{0}\in (1,+\infty)$ for $\mathcal{L}(x_{0})\equiv\left\{x\in\E\,:\,f(x)\leq f(x_{0})\right\}$, with $x_{0}$ being the starting point.
\end{itemize}
The next theorem establishes global convergence rate for
Algorithm 1.

\begin{theorem}
\label{thm:3.1}
Suppose that H1 and H2 are true and let $\left\{x_{t}\right\}_{t=0}^{T}$ be a sequence generated by Algorithm 1. Denote by $m$ the first iteration number such that
\begin{equation*}
f(x_{m})-f(x^{*})\leq 2[8(p+1)!]^{p+\alpha-1}M_{\nu}D_{0}^{p+\alpha},
\end{equation*}
and assume that $m < T$. Then
\begin{equation}
m\leq\dfrac{1}{\ln\left(\frac{p+\alpha}{p+\alpha-1}\right)}\ln\max\left\{1,\log_{2}\dfrac{f(x_{0})-f(x^{*})}{[8(p+1)!]^{p+\alpha-1}M_{\nu}D_{0}^{p+\alpha}}\right\}
\label{eq:3.13}
\end{equation}
and, for all $k$, $m<k\leq T$, we have
\begin{equation}
f(x_{k})-f(x^{*})\leq\dfrac{[24p(p+1)!]^{p+\alpha-1}M_{\nu}D_{0}^{p+\alpha}}{(k-m)^{p+\alpha-1}}.
\label{eq:3.14}
\end{equation}
\end{theorem}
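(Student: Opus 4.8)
The plan is to reduce everything to a single scalar recursion for the functional residuals $\delta_t:=f(x_t)-f(x^{*})$ and then analyze it in the two regimes separated by the threshold defining $m$. First I would record that the acceptance test (3.5) forces $f(x_{t+1})=f(x_t^+)\le f(x_t)$, so the whole sequence stays in $\mathcal{L}(x_0)$ and H2 gives $\|x_{t+1}-x^{*}\|\le D_0$. Convexity then yields $\delta_{t+1}\le\langle\nabla f(x_{t+1}),x_{t+1}-x^{*}\rangle\le\|\nabla f(x_{t+1})\|_*D_0$, i.e. $\|\nabla f(x_{t+1})\|_*\ge \delta_{t+1}/D_0$. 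Substituting this into (3.5), using $x_{t+1}=x_t^+$ and the uniform bound $M_t\le M_\nu$, I obtain
$$
\delta_t-\delta_{t+1}\ \ge\ \frac{1}{8(p+1)!\,M_\nu^{1/(p+\alpha-1)}D_0^{(p+\alpha)/(p+\alpha-1)}}\,\delta_{t+1}^{\,q},\qquad q:=\frac{p+\alpha}{p+\alpha-1}.
$$
Writing $\Delta:=[8(p+1)!]^{p+\alpha-1}M_\nu D_0^{p+\alpha}$ (so that the threshold for $m$ is $2\Delta$), a direct check shows the coefficient above equals $\Delta^{-(q-1)}$, so in the normalized variables $r_t:=\delta_t/\Delta$ the recursion becomes simply
$$
r_t-r_{t+1}\ \ge\ r_{t+1}^{\,q},\qquad q-1=\frac{1}{p+\alpha-1}.
$$

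For the bound (3.13) on $m$ I would exploit the superlinear regime hidden in this inequality: dropping the nonnegative term $r_{t+1}$ on the right gives $r_t\ge r_{t+1}^{q}$, hence $r_{t+1}\le r_t^{1/q}$ and, by induction, $r_t\le r_0^{\,q^{-t}}$. Since $m$ is the first index with $r_m\le 2$, it suffices to find the first $t$ for which $r_0^{\,q^{-t}}\le 2$; taking logarithms twice turns this into $q^{t}\ge\log_2 r_0$, i.e. $t\ge \ln(\log_2 r_0)/\ln q$, which is exactly (3.13) once $r_0=(f(x_0)-f(x^{*}))/\Delta$ and the factor $\max\{1,\cdot\}$ (covering $r_0\le 2$, where $m=0$) are inserted.

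For the sublinear rate (3.14), valid for $k>m$, I would pass to $b_t:=r_t^{-1/(p+\alpha-1)}$, which is increasing since $r_t$ decreases. The key observation is that for $t\ge m$ monotonicity gives $r_{t+1}\le r_m\le 2$, and on this bounded range the recursion forces a uniform per-step increase of $b_t$. Concretely, from $r_t\ge r_{t+1}\big(1+r_{t+1}^{1/(p+\alpha-1)}\big)$ one gets
$$
b_{t+1}-b_t\ \ge\ \frac{1}{w}\Big[1-(1+w)^{-1/(p+\alpha-1)}\Big],\qquad w:=r_{t+1}^{1/(p+\alpha-1)}\in\big(0,\,2^{1/(p+\alpha-1)}\big],
$$
and since the bracket is concave in $w$ and vanishes at $w=0$, the chord estimate from the origin bounds the right-hand side below by its value at $w=2^{1/(p+\alpha-1)}$. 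The remaining, and I expect principal, technical obstacle is to verify that this value is at least $\tfrac{1}{3p}$ for all admissible $p$ and $\alpha$; this is precisely what produces the constant $24p(p+1)!=3p\cdot 8(p+1)!$ in (3.14). Granting it, telescoping $b_{t+1}-b_t\ge \tfrac{1}{3p}$ from $m$ to $k-1$ and discarding the nonnegative term $b_m$ yields $r_k^{-1/(p+\alpha-1)}\ge (k-m)/(3p)$, that is $r_k\le (3p)^{p+\alpha-1}/(k-m)^{p+\alpha-1}$; multiplying through by $\Delta$ gives exactly (3.14).
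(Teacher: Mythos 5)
Your proposal is correct in structure and follows the same route as the paper: your reduction to the scalar recursion $r_t-r_{t+1}\ge r_{t+1}^{q}$ is exactly the paper's derivation of (3.18) (your normalization by $\Delta=[8(p+1)!]^{p+\alpha-1}M_\nu D_0^{p+\alpha}$ is in fact the consistent one; the paper's definition of $\delta_k$ carries a typo with exponent $p$). The only real difference is that where the paper outsources the analysis of this recursion to Lemma 1.1 of \cite{GN} --- inequality (1.2) for the bound on $m$ and inequality (1.3) for the sublinear tail --- you re-derive both statements: your iteration $r_{t+1}\le r_t^{1/q}$, i.e.\ $\ln r_t\le q^{-t}\ln r_0$, is precisely (1.2), and your substitution $b_t=r_t^{-1/(p+\alpha-1)}$ with a uniform per-step increase is the standard proof mechanism behind (1.3). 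This buys self-containedness, at the price of the one inequality you left unproven.

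That flagged ``principal technical obstacle'' is genuine as a missing step, but it is true and closes in two lines via Bernoulli's inequality, so your route does go through. For $s=\frac{1}{p+\alpha-1}\in(0,1]$ and $w\ge 0$, Bernoulli gives $(1+w)^{1-s}\le 1+(1-s)w$, hence
\[
1-(1+w)^{-s}\;=\;1-\frac{(1+w)^{1-s}}{1+w}\;\ge\;1-\frac{1+(1-s)w}{1+w}\;=\;\frac{sw}{1+w}.
\]
Evaluating your chord bound at $w=W=2^{s}\le 2$ then yields
\[
\frac{1-(1+W)^{-s}}{W}\;\ge\;\frac{s}{1+W}\;\ge\;\frac{s}{3}\;=\;\frac{1}{3(p+\alpha-1)}\;\ge\;\frac{1}{3p},
\]
which is exactly the per-step constant you need; telescoping then gives (3.14) with $24p(p+1)!=3p\cdot 8(p+1)!$, matching the paper's own constant computation $8(p+1)!\,(p+\alpha-1)\bigl(1+2^{1/(p+\alpha-1)}\bigr)\le 24p(p+1)!$. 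One last remark on the bound for $m$: your argument shows $r_t\le 2$ for every integer $t\ge \ln(\log_2 r_0)/\ln q$, which strictly gives $m\le\bigl\lceil \ln(\log_2 r_0)/\ln q\bigr\rceil$ rather than (3.13) verbatim; arguing through $r_{m-1}>2$, as the paper does, gives $m\le 1+\ln(\log_2 r_0)/\ln q$. This off-by-one imprecision is inherited from the paper's own proof and has no effect on the claimed complexity.
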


\begin{proof}
By Step 1 in Algorithm 1, we have
\begin{equation}
M_{k}\leq M_{\nu},\,\,k=0,\ldots,T-1.
\label{eq:3.16}
\end{equation}
Thus, in view of (\ref{eq:3.5}), (\ref{eq:3.16}), and H2, for $k=0,\ldots,T-1$ we have
\begin{eqnarray}
f(x_{k})-f(x_{k+1})&\geq &\dfrac{1}{8(p+1)!}\left[\dfrac{1}{M_{k}}\right]^{\frac{1}{p+\alpha-1}}\|\nabla f(x_{k+1})\|_{*}^{\frac{p+\alpha}{p+\alpha-1}}\nonumber\\
                  &\geq &\dfrac{1}{8(p+1)!}\left[\dfrac{1}{M_{\nu}}\right]^{\frac{1}{p+\alpha-1}}\|\nabla f(x_{k+1})\|_{*}^{\frac{p+\alpha}{p+\alpha-1}}\label{eq:3.17}\\
                  & \geq &\dfrac{1}{8(p+1)!}\left[\dfrac{1}{M_{\nu}D_{0}^{p+\alpha}}\right]^{\frac{1}{p+\alpha-1}}\left(\|\nabla f(x_{k+1})\|_{*}\|x_{k+1}-x^{*}\|\right)^{\frac{p+\alpha}{p+\alpha-1}}\nonumber\\
                  & \geq &\dfrac{1}{8(p+1)!}\left[\dfrac{1}{M_{\nu}D_{0}^{p+\alpha}}\right]^{\frac{1}{p+\alpha-1}}\left(f(x_{k+1})-f(x^{*})\right)^{\frac{p+\alpha}{p+\alpha-1}},
\label{eq:3.18}
\end{eqnarray}
where the last inequality is due to the convexity of $f$. Now, denoting
\begin{equation*}
\delta_{k}=\dfrac{f(x_{k})-f(x^{*})}{[8(p+1)!]^{p+\alpha-1}M_{\nu}D_{0}^{p+\alpha}}
\end{equation*}
we see from (\ref{eq:3.18}) that this sequence satisfies condition (1.1) of Lemma 1.1 in \cite{GN} with $u=\frac{p+\alpha}{p+\alpha-1}$. Note that $m$ is the first iteration for which $\delta_{m}\leq 2$. Using Lemma 1.1 in \cite{GN}, this inequality allow us to obtain the upper bound (\ref{eq:3.13}) for $m$ and also simplifies our final bound for the functional residual. Indeed, if $m>0$, then $\delta_{0}>2$ and, in view of inequality (1.2) of Lemma 1.1 in \cite{GN}, we have
\begin{equation*}
\ln\,2\leq \ln\,\delta_{m-1}\leq\left(\dfrac{p+\alpha-1}{p+\alpha}\right)^{m-1}\ln\,\delta_{0}\Longrightarrow \left(\dfrac{p+\alpha}{p+\alpha-1}\right)^{m-1}\ln\,2\leq\ln\,\delta_{0}
\end{equation*}
\begin{equation*}
\Longrightarrow \left(\dfrac{p+\alpha}{p+\alpha-1}\right)^{m-1}\leq\dfrac{\ln\,\delta_{0}}{\ln\,2}=\log_{2}\,\delta_{0}.
\end{equation*}
Thus, $ m\leq
\dfrac{\ln\,\delta_{0}}{\ln\left(\frac{p+\alpha}{p+\alpha-1}\right)}$,
and so (\ref{eq:3.13}) holds. Consequently, from
inequality (1.3) of Lemma 1.1 in \cite{GN} we get the
following rate of convergence:
\begin{equation*}
\delta_{k}\leq\left[\dfrac{1+\delta_{m}^{u-1}}{(u-1)(k-m)}\right]^{\frac{1}{u-1}}
\end{equation*}
that is,
\begin{equation*}
\dfrac{f(x_{k})-f(x^{*})}{[8(p+1)!]^{p+\alpha-1}M_{\nu}D_{0}^{p+\alpha}}\leq\left[\dfrac{(p+\alpha-1)(1+2^{\frac{1}{p+\alpha-1}})}{k-m}\right]^{p+\alpha-1}.
\end{equation*}
Therefore,
\begin{eqnarray*}
f(x_{k})-f(x^{*})&\leq &\dfrac{[8(1+2^{\frac{1}{p+\alpha-1}})(p+\alpha-1)(p+1)!]^{p+\alpha-1}M_{\nu}D_{0}^{p+\alpha}}{(k-m)^{p+\alpha-1}}\\
&\leq &
\dfrac{[24p(p+1)!]^{p+\alpha-1}M_{\nu}D_{0}^{p+\alpha}}{(k-m)^{p+\alpha-1}}.
\end{eqnarray*}
\end{proof}

If we assume that $\nu$ and $H_{f,p}(\nu)$ are known, by Lemma \ref{lem:extraA.2}, we can set
\begin{equation*}
M_{t}=M_{\nu}\equiv\max\left\{\dfrac{3H_{f,p}(\nu)}{2},3\theta (p-1)!\right\}.
\end{equation*}
Here, by (\ref{eq:3.1}) the corresponding version of Algorithm 1 takes at most \\ $\mathcal{O}(\epsilon^{-1/(p+\nu-1)})$ iterations to generate $x_{k}$ such that $f(x_{k})-f(x^{*})\leq \epsilon$ for a given $\epsilon\in (0,1)$. However, in most practical problems, $H_{f,p}(\nu)$ is not known. To deal with this situation, we can consider the following adaptive version of Algorithm 1:
\\[0.2cm]
\begin{mdframed}
\noindent\textbf{Algorithm 2. Adaptive Tensor Method}
\\[0.2cm]
\noindent\textbf{Step 0.} Choose $x_{0}\in\E$, $H_{0}>0$ and $\theta\geq 0$.  Set $\alpha$ by (\ref{eq:3.1}) and $t:=0$.\\
\noindent\textbf{Step 1.} Set $i:=0$.\\
\noindent\textbf{Step 1.1} Compute an approximate solution $x_{t,i}^{+}$ to
$
\min\limits_{y\in\E}\,\Omega^{(\alpha)}_{x_{t},p,2^{i}H_{t}}(y)$,
such that
\begin{equation*}
\Omega^{(\alpha)}_{x_{t},p,2^{i}H_{t}}(x_{t,i}^{+})\leq f(x_{t})\quad\text{and}\quad \|\nabla \Omega^{(\alpha)}_{x_{t},p,2^{i}H_{t}}(x_{t,i}^{+})\|_{*}\leq\theta\|x_{t,i}^{+}-x_{t}\|^{p+\alpha-1}.
\end{equation*}
\noindent\textbf{Step 1.2.} If
\begin{equation*}
f(x_{t})-f(x_{t,i}^{+})\geq \dfrac{1}{8(p+1)!(2^{i}H_{t})^{\frac{1}{p+\alpha-1}}}\|\nabla f(x_{t,i}^{+})\|_{*}^{\frac{p+\alpha}{p+\alpha-1}}
\end{equation*}
holds, set $i_{t}:=i$ and go to Step 2. Otherwise, set $i:=i+1$ and go to Step 1.1.\\
\noindent\textbf{Step 2.} Set $x_{t+1}=x_{t,i_{t}}^{+}$ and $H_{t+1}=2^{i_{t}-1}H_{t}$.\\
\noindent\textbf{Step 3.} Set $t:=t+1$ and go to Step 1.
\end{mdframed}

Let us define the following function of $\epsilon>0$:
\begin{equation}
N_{\nu}(\epsilon)=\left\{\begin{array}{ll}
\max\left\{\dfrac{3H_{f,p}(\nu)}{2},3\theta
(p-1)!\right\},&\text{if}\,\,\alpha = \nu,\\
\max\left\{\theta,\left(\dfrac{3H_{f,p}(\nu)}{2}\right)^{\frac{p}{p+\nu-1}}
\left(\dfrac{4R(\epsilon)}{\epsilon}\right)^{\frac{1-\nu}{p+\nu-1}}\right\},
&\text{if}\,\,\alpha = 1,
                         \end{array}
                  \right.
\label{eq:3.6}
\end{equation}
where
\begin{equation}
R(\epsilon)=\max_{x\in\E}\left\{\|x-x^{*}\|\,:\,f(x)\leq f(x^{*})+\epsilon\right\}.
\label{eq:3.7}
\end{equation}
The next lemma provides upper bounds on $H_{t}$ and on the
number of calls of the oracle\footnote{By \textit{calls of the oracle} we mean the joint computation of $f$ and its derivatives.}.

\begin{lemma}
\label{lem:3.1}
Suppose that H1 and H2 are true. Given $\epsilon>0$, assume that $\left\{x_{t}\right\}_{t=0}^{T}$ is a sequence generated by Algorithm 2 such that
\begin{equation}
f(x_{0})-f(x^{*})\geq\epsilon,
\label{eq:3.8}
\end{equation}
\begin{equation}
f(x_{t,i}^{+})-f(x^{*})\geq\epsilon,\quad i=0,\ldots,i_{t}\,\,\text{and}\,\, t=0,\ldots,T.
\label{eq:3.9}
\end{equation}
Then,
\begin{equation}
H_{t}\leq\max\left\{H_{0},N_{\nu}(\epsilon)\right\},\quad\text{for}\,\,t=0,\ldots,T.
\label{eq:3.10}
\end{equation}
Moreover, the number $O_{T}$ of calls of the oracle after $T$ iterations is bounded as follows:
\begin{equation}
O_{T}\leq 2T+\log_{2}\max\left\{H_{0},N_{\nu}(\epsilon)\right\}-\log_{2}H_{0}.
\label{eq:3.11}
\end{equation}
\end{lemma}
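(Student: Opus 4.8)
The plan is to prove the bound (\ref{eq:3.10}) on $H_{t}$ by induction on $t$, and then to deduce the oracle count (\ref{eq:3.11}) by a telescoping argument. The engine of both parts is a single \emph{acceptance-threshold} property: under the standing pre-target assumptions (\ref{eq:3.8})--(\ref{eq:3.9}), the inner loop of Algorithm 2 accepts the trial point as soon as the regularization parameter $2^{i}H_{t}$ reaches the level $N_{\nu}(\epsilon)$ defined in (\ref{eq:3.6}). Concretely, I would first establish that whenever $2^{i}H_{t}\geq N_{\nu}(\epsilon)$ the descent test of Step 1.2 holds, so that the first accepted index $i_{t}$ never needs to grow past the smallest power of two that lifts $H_{t}$ above $N_{\nu}(\epsilon)$.

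For the inductive proof of (\ref{eq:3.10}), the base case $H_{0}\leq\max\{H_{0},N_{\nu}(\epsilon)\}$ is immediate. For the step, assume $H_{t}\leq\max\{H_{0},N_{\nu}(\epsilon)\}$ and recall from Step 2 that $H_{t+1}=2^{i_{t}-1}H_{t}$. If $i_{t}=0$, then $H_{t+1}=H_{t}/2\leq\max\{H_{0},N_{\nu}(\epsilon)\}$. If $i_{t}\geq 1$, then the index $i_{t}-1$ was rejected in Step 1.2, so by the contrapositive of the acceptance-threshold property — applied at the iterate $x_{t,i_{t}-1}^{+}$, which satisfies $f(x_{t,i_{t}-1}^{+})-f(x^{*})\geq\epsilon$ by (\ref{eq:3.9}) — we must have $2^{i_{t}-1}H_{t}<N_{\nu}(\epsilon)$; hence $H_{t+1}=2^{i_{t}-1}H_{t}<N_{\nu}(\epsilon)\leq\max\{H_{0},N_{\nu}(\epsilon)\}$. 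In both cases the induction closes.

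The main obstacle is the acceptance-threshold property itself, and it splits along the two regimes of $\alpha$ in (\ref{eq:3.1}). When $\alpha=\nu$ the required level $N_{\nu}(\epsilon)=\max\{3H_{f,p}(\nu)/2,3\theta(p-1)!\}$ is $\epsilon$-independent and the claim is the content of Lemma \ref{lem:A.4}: once $M\geq N_{\nu}(\epsilon)$, the Hölder bounds (\ref{eq:2.4})--(\ref{eq:2.5}) make $\Omega_{x_{t},p,M}^{(\nu)}$ a sufficiently accurate model and the descent test is forced. The delicate case is $\alpha=1$ (unknown $\nu$), where a regularization of order $p+1$ is applied although the $p$th derivative is only $\nu$-Hölder. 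Here I would combine the majorization error (\ref{eq:2.4}) and the gradient bound (\ref{eq:2.5}) — both carrying the mismatched exponents $p+\nu$ and $p+\nu-1$ — with the inexact-solution conditions on $\nabla\Omega$, and absorb the exponent gap through a Young-type inequality that trades it for an additive error of order $\epsilon$. The scale at which the step is measured is controlled by $R(\epsilon)$ from (\ref{eq:3.7}), while the residual lower bound (\ref{eq:3.9}) keeps that error within the $\epsilon$-budget; balancing the two contributions is precisely what produces the second branch of $N_{\nu}(\epsilon)$, with its exponents $\tfrac{p}{p+\nu-1}$ and $\tfrac{1-\nu}{p+\nu-1}$.

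Finally, for (\ref{eq:3.11}) I would count oracle calls directly. At iteration $t$ the inner loop evaluates the trial points $x_{t,0}^{+},\ldots,x_{t,i_{t}}^{+}$, that is $i_{t}+1$ oracle calls, so $O_{T}=\sum_{t=0}^{T-1}(i_{t}+1)$. From $H_{t+1}=2^{i_{t}-1}H_{t}$ we get $i_{t}=1+\log_{2}(H_{t+1}/H_{t})$, and the sum telescopes:
$$O_{T}=2T+\log_{2}H_{T}-\log_{2}H_{0}.$$
Substituting the already-proved bound (\ref{eq:3.10}) on $H_{T}$ gives $\log_{2}H_{T}\leq\log_{2}\max\{H_{0},N_{\nu}(\epsilon)\}$, which is exactly (\ref{eq:3.11}).
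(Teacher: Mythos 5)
Your proof is correct and takes essentially the same route as the paper: the same induction on $t$ powered by the acceptance-threshold property --- which the paper supplies as Lemma \ref{lem:extraA.2} for $\alpha=\nu$ (not Lemma \ref{lem:A.4}, which treats the $\alpha=1$ model) and as Corollary \ref{cor:A.5} applied with $\delta=\epsilon/R(\epsilon)$ for $\alpha=1$, the bound $\|\nabla f(x_{t,i}^{+})\|_{*}\geq \epsilon/R(\epsilon)$ following from (\ref{eq:3.9}) and convexity --- and the same telescoping identity $O_{T}=2T+\log_{2}H_{T}-\log_{2}H_{0}$ for the oracle count. The only differences are cosmetic: you handle the $i_{t}=0$ case explicitly where the paper leaves it implicit, and your balancing sketch for the $\alpha=1$ threshold (which indeed produces the exponents $\frac{p}{p+\nu-1}$ and $\frac{1-\nu}{p+\nu-1}$) is carried out in the paper's appendix, Lemmas \ref{lem:A.3}--\ref{lem:A.4}, via a contradiction argument on the step length $\|x^{+}-\bar{x}\|$ rather than a Young-type inequality.
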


\begin{proof}
Let us prove (\ref{eq:3.10}) by induction. Clearly it
holds for $t=0$. Assume that (\ref{eq:3.10}) is true for
some $t$, $0\leq t\leq T-1$. If $\nu$ is known, then by
(\ref{eq:3.1}) we have $\alpha=\nu$. Thus, by H1 and Lemma
\ref{lem:extraA.2} the final value of $2^{i_{t}}H_{t}$
cannot exceed
\begin{equation*}
2\max\left\{\dfrac{3H_{f,p}(\nu)}{2},3\theta (p-1)!\right\},
\end{equation*}
since otherwise we should stop the line search earlier. Therefore,
\begin{equation*}
H_{t+1}=\frac{1}{2}2^{i_{t}}H_{t}\leq\max\left\{\dfrac{3H_{f,p}(\nu)}{2},3\theta (p-1)!\right\}=N_{\nu}(\epsilon)\leq\max\left\{H_{0},N_{\nu}(\epsilon)\right\},
\end{equation*}
that is, (\ref{eq:3.10}) holds for $t=t+1$.

On the other hand, if $\nu$ is unknown, we have $\alpha=1$. In view of (\ref{eq:3.7}), (\ref{eq:3.8}) and H2, it follows that
\begin{equation}
R(\epsilon)\leq R(f(x_{0})-f(x^{*}))=\max_{x\in\mathcal{L}(x_{0})}\|x-x^{*}\|\leq D_{0}<+\infty.
\label{eq:3.12}
\end{equation}
Thus, by (\ref{eq:3.9}) and Lemma A.5 in \cite{GN2} we have
$
\|\nabla f(x_{t+1})\|_{*}\geq\dfrac{\epsilon}{R(\epsilon)}.
$
In this case, it follows from Corollary \ref{cor:A.5} with $\delta=\epsilon/R(\epsilon)$ that
\begin{equation*}
2^{i_{t}}H_{t}\leq 2\max\left\{\theta,\left(\dfrac{3H_{f,p}(\nu)}{2}\right)^{\frac{p}{p+\nu-1}}\left(\dfrac{4R(\epsilon)}{\epsilon}\right)^{\frac{1-\nu}{p+\nu-1}}\right\}=2N_{\nu}(\epsilon).
\end{equation*}
Consequently, we also have
\begin{equation*}
H_{t+1}=\frac{1}{2}2^{i_{t}}H_{t}\leq N_{\nu}(\epsilon)\leq\max\left\{H_{0},N_{\nu}(\epsilon)\right\},
\end{equation*}
that is, (\ref{eq:3.10}) holds for $t+1$. This completes the induction argument.

Finally, note that at the $k$th iteration of Algorithm 2, the oracle is called $i_{k}+1$ times. Since $H_{k+1}=2^{i_{k}-1}H_{k}$, it follows that $i_{k}-1=\log_{2}H_{k+1}-\log_{2}H_{k}$. Thus, by (\ref{eq:3.10}) we get
\begin{eqnarray*}
O_{T}&=&\sum_{k=0}^{T-1}(i_{k}+1)=\sum_{k=0}^{T-1}\,2+\log_{2}H_{k+1}-\log_{2}H_{k}=2T+\log_{2}H_{T}-\log_{2}H_{0}\\
&\leq & 2T+\log_{2}\max\left\{H_{0},N_{\nu}(\epsilon)\right\}-\log_{2}H_{0}.
\end{eqnarray*}
\end{proof}

From Lemma \ref{lem:3.1}, we see that Algorithm 2 is a particular case of Algorithm 1 in which
\begin{equation*}
M_{t}=2^{i_{t}}H_{t}=2H_{t+1}\quad\text{and}\quad M_{\nu}=2\max\left\{H_{0},N_{\nu}(\epsilon)\right\}.
\end{equation*}
Thus, combining Theorem 3.2 and Lemma 3.3, we obtain the following result.
\begin{theorem}
\label{thm:3.2} Suppose that H1 and H2 are true. Given
$\epsilon\in (0,1)$, assume that
$\left\{x_{t}\right\}_{t=0}^{T}$ is a sequence generated
by Algorithm 2 such that 
\begin{equation}
f(x_{0})-f(x^{*})\geq\epsilon,
\label{eq:3.8plus}
\end{equation}
\begin{equation}
f(x_{t,i}^{+})-f(x^{*})\geq\epsilon,\quad i=0,\ldots,i_{t}\,\,\text{and}\,\, t=0,\ldots,T.
\label{eq:3.9plus}
\end{equation}
Denote by $m$ the first iteration number such that
\begin{equation*}
f(x_{m})-f(x^{*})\leq [8(p+1)!]^{p+\alpha-1}4\max\left\{H_{0},N_{\nu}(\epsilon)\right\}D_{0}^{p+\alpha},
\end{equation*}
and assume that $m<T$. Then
\begin{equation}
m\leq\dfrac{1}{\ln\left(\frac{p+\alpha}{p+\alpha-1}\right)}\ln\max\left\{1,\log_{2}\dfrac{f(x_{0})-f(x^{*})}{[8(p+1)!]^{p+\alpha-1}2\max\left\{H_{0},N_{\nu}(\epsilon)\right\}D_{0}^{p+\alpha}}\right\}
\label{eq:3.19}
\end{equation}
and
\begin{equation}
f(x_{T})-f(x^{*})\leq\dfrac{[24p(p+1)!]^{p+\alpha-1}2\max\left\{H_{0},N_{\nu}(\epsilon)\right\}D_{0}^{p+\alpha}}{(T-m)^{p+\alpha-1}}
\label{eq:3.20}
\end{equation}
Consequently,
\begin{equation}
T\leq m+\kappa_{1}^{(\nu)}[24p(p+1)!]\epsilon^{-\frac{1}{p+\nu-1}},
\label{eq:3.21}
\end{equation}
where
\begin{equation*}
\kappa_{1}^{(\nu)}=\left\{\begin{array}{ll} \left(2\max\left\{H_{0},\dfrac{3H_{f,p}(\nu)}{2},3\theta (p-1)!\right\}D_{0}^{p+\nu}\right)^{\frac{1}{p+\nu-1}},&\text{if}\,\,\nu\,\,\text{is known},\\
                                          \left(2\max\left\{H_{0},\theta,\left(\dfrac{3H_{f,p}(\nu)}{2}\right)^{\frac{p}{p+\nu-1}}\left(4D_{0}\right)^{\frac{1-\nu}{p+\nu-1}}\right\}D_{0}^{p+1}\right)^{\frac{1}{p}},&\text{if}\,\,\nu\,\,\text{is unknown}.
                         \end{array}
                  \right.
\end{equation*}
\end{theorem}

\begin{proof}
As mentioned above, by Lemma 3.3 we have
\begin{equation*}
2^{i_{t}}H_{t}=2(2^{i_{t}-1}H_{t})=2H_{t+1}\leq 2\max\left\{H_{0},N_{\nu}(\epsilon)\right\},\,\,t=0,\ldots,T-1.
\end{equation*}
Then, (\ref{eq:3.19}) and (\ref{eq:3.20}) follow directly from Theorem \ref{thm:3.1} with
\begin{equation*}
M_{\nu}=2\max\left\{H_{0},N_{\nu}(\epsilon)\right\}.
\end{equation*}
Now, combining (\ref{eq:3.9plus}) and (\ref{eq:3.20}), we obtain
\begin{equation*}
\epsilon\leq\dfrac{[24p(p+1)!]^{p+\alpha-1}2\max\left\{H_{0},N_{\nu}(\epsilon)\right\}D_{0}^{p+\alpha}}{(T-m)^{p+\alpha-1}}
\end{equation*}
and so,
\begin{equation}
T\leq m+\dfrac{[24p(p+1)!]\left(2\max\left\{H_{0},N_{\nu}(\epsilon)\right\}D_{0}^{p+\alpha}\right)^{\frac{1}{p+\alpha-1}}}{\epsilon^{\frac{1}{p+\alpha-1}}}.
\label{eq:3.22}
\end{equation}
If $\nu$ is known, then $\alpha=\nu$ and, by (\ref{eq:3.6}), we have
\begin{equation}
N_{\nu}(\epsilon)=\max\left\{\dfrac{3H_{f,p}(\nu)}{2},3\theta (p-1)!\right\}.
\label{eq:3.23}
\end{equation}
Thus, combining (\ref{eq:3.22}) and (\ref{eq:3.23}), we get (\ref{eq:3.21}). On the other hand, if $\nu$ is unknown, then $\alpha=1$ and, by (\ref{eq:3.6}), (\ref{eq:3.12}) and $\epsilon\in (0,1)$, we have
\begin{eqnarray}
N_{\nu}(\epsilon)&=&\max\left\{\theta,\left(\dfrac{3H_{f,p}(\nu)}{2}\right)^{\frac{p}{p+\nu-1}}\left(\dfrac{4R(\epsilon)}{\epsilon}\right)^{\frac{1-\nu}{p+\nu-1}}\right\}\nonumber\\
&\leq &\max\left\{\theta,\left(\dfrac{3H_{f,p}(\nu)}{2}\right)^{\frac{p}{p+\nu-1}}\left(4D_{0}\right)^{\frac{1-\nu}{p+\nu-1}}\right\}\epsilon^{-\frac{1-\nu}{p+\nu-1}}.
\label{eq:3.24}
\end{eqnarray}
In this case, combining (\ref{eq:3.22}) and (\ref{eq:3.24}) we also get (\ref{eq:3.21}).
\end{proof}

\begin{remark}
Note that for any $\theta$ in the interval
\begin{equation*}
0<\theta<\left\{\begin{array}{ll} \min\left\{\dfrac{H_{0}}{3(p-1)!},\dfrac{H_{f,p}(\nu)}{(p-1)!2}\right\},&\text{if $\nu$ is known},\\
                                  \min\left\{H_{0},\left(\dfrac{3 H_{f,p}(\nu)}{2}\right)^{\frac{p}{p+\nu-1}}(4D_{0})^{\frac{1-\nu}{p+\nu-1}}\right\},&\text{if $\nu$ is unknown},
                \end{array}
         \right.
\end{equation*}
the corresponding right-hand side in (3.21) has the same value as for $\theta=0$.
\end{remark}

Note that Algorithm 2 with $\alpha=1$ is a universal
scheme: it works for any H\"{o}lder parameter $\nu\in
[0,1]$ without using it explicitly. This algorithm can be
viewed as a generalization of the universal method (6.10)
in \cite{GN}. Looking at the efficiency bound
(\ref{eq:3.21}), for $\nu$ known and $\nu$ unknown, we see
that the universal scheme ensures the same dependence on
the accuracy $\epsilon$ as the nonuniversal scheme
($\alpha=\nu\neq 1$). Remarkably, this is not true for the
accelerated schemes obtained from the standard estimating
sequences technique, as we will see in the next section.

\section{Accelerated tensor schemes}

Similarly to Section 3, we shall consider a general accelerated tensor method parametrized by the constant $\alpha$ given in (\ref{eq:3.1}). Specifically, at the beginning of the $t$th iteration ($t>0$) one has an estimate $x_{t}$ for the solution of (\ref{eq:2.1}), an auxiliary vector $v_{t}$ and constants $A_{t},M_{t}>0$. A new vector $y_{t}$ is computed as a convex combination of $x_{t}$ and $v_{t}$:
\begin{equation}
y_{t}=(1-\gamma_{t})x_{t}+\gamma_{t}v_{t},
\label{eq:4.1}
\end{equation}
where
\begin{equation}
\gamma_{t}=\dfrac{a_{t}}{A_{t}+a_{t}}
\label{eq:4.2}
\end{equation}
with $a_{t}>0$ being computed from the equation
\begin{equation}
a_{t}^{p+\alpha}=\dfrac{1}{2^{(3p-1)}}\left[\dfrac{(p-1)!}{M_{t}}\right]\left(A_{t}+a_{t}\right)^{p+\alpha-1}
\label{eq:4.3}
\end{equation}
Then, a trial point $x_{t}^{+}$ is computed as an approximate solution to the auxiliary problem
\begin{equation}
\min_{x\in\E}\,\Omega_{y_{t},p,M_{t}}^{(\alpha)}(x),
\label{eq:4.4}
\end{equation}
such that
\begin{equation}
\Omega_{y_{t},p,M_{t}}^{(\alpha)}(x_{t}^{+})\leq f(y_{t})\quad\text{and}\quad \|\nabla\Omega_{y_{t},p,M_{t}}^{(\alpha)}(x_{t}^{+})\|_{*}\leq\theta\|x_{t}^{+}-y_{t}\|^{p+\alpha-1},
\label{eq:4.5}
\end{equation}
where $\theta\geq 0$ is a user-defined parameter. If the descent condition
\begin{equation}
\langle\nabla f(x_{t}^{+}),y_{t}-x_{t}^{+}\rangle \geq \dfrac{1}{4}\left[\dfrac{(p-1)!}{M_{t}}\right]^{\frac{1}{p+\alpha-1}}\|\nabla f(x_{t}^{+})\|_{*}^{\frac{p+\alpha}{p+\alpha-1}}
\label{eq:4.7}
\end{equation}
is satisfied, then $x_{t}^{+}$ is accepted, and we define
$x_{t+1}=x_{t}^{+}$.
Otherwise, constant $M_{t}$ is increased until the
corresponding trial point $x_{t}^{+}$ is accepted. As in
Algorithm 1, we assume that there exists $M_{\nu}>0$ such
that $M_{t}\leq M_{\nu}$ for all $t$. After obtaining
$x_{t+1}$, we set $A_{t+1}=A_{t}+a_{t}$ and compute
\begin{equation}
v_{t+1}=\arg\min_{x\in\E}\,\psi_{t+1}(x),
\label{eq:4.8}
\end{equation}
where
\begin{equation}
\psi_{t+1}(x)=\psi_{t}(x)+a_{t}\left[f(x_{t+1})+\langle\nabla f(x_{t+1}),x-x_{t+1}\rangle\right].
\label{eq:4.9}
\end{equation}
To initialize, we choose $x_{0}$ and we set $v_{0}=x_{0}$, $A_{0}=0$, and $\psi_{0}(x)=\frac{1}{p+\alpha}\|x-x_{0}\|^{p+\alpha}$. This general scheme can be summarized in the following way.
\begin{mdframed}
\noindent\textbf{Algorithm 3. Accelerated Tensor Method}
\\[0.05cm]
\noindent\textbf{Step 0.} Choose $x_{0}\in\E$, $H_{0}>0$. Set $\alpha$ by (\ref{eq:3.1}), $v_{0}=x_{0}$, $A_{0}=0$, and $t:=0$.\\
\noindent\textbf{Step 1.} Find $0<M_{t}\leq M_{\nu}$ such that (\ref{eq:4.7}) holds for an approximate solution $x_{t}^{+}$ to (\ref{eq:4.4}) satisfying (\ref{eq:4.5}), with $y_{t}$ being defined by (\ref{eq:4.1})-(\ref{eq:4.3}).\\
\noindent\textbf{Step 2.} Set $x_{t+1}=x_{t}^{+}$ and $A_{t+1}=A_{t}+a_{t}$, with $a_{t}>0$ obtained from (\ref{eq:4.3}).\\
\noindent\textbf{Step 3.} Define $\psi_{t+1}(.)$ by (\ref{eq:4.9}) and compute $v_{t+1}$ by (\ref{eq:4.8}).\\
\noindent\textbf{Step 4.} Set $t:=t+1$ and go back to Step 1.
\end{mdframed}

\begin{remark}
From the expression of $\psi_{t+1}(\,.\,)$ we can see that $\min_{x\in\E}\psi_{t+1}(x)$ admits a closed form solution, namely,
\begin{equation*}
v_{t+1}=x_{0}-\dfrac{B^{-1}s_{t+1}}{\|s_{t+1}\|^{\frac{p+\alpha-2}{p+\alpha-1}}},\quad s_{t+1}=\sum_{i=0}^{t}a_{i}\nabla f(x_{i+1}).
\end{equation*}
Regarding the computation of $a_{t}$, for $t=0$, (\ref{eq:4.3}) gives
\begin{equation*}
a_{0}=\dfrac{1}{2^{(3p-1)}}\left[\dfrac{(p-1)!}{M_{t}}\right].
\end{equation*}
For $t>0$, we have $A_{t}>0$. Thus, the computation of $a_{t}$ requires the solution of a univariate nonlinear equation of the form
\begin{equation*}
x^{p+\alpha}-B(A+x)^{p+\alpha-1}=0,\quad B,A>0.
\end{equation*}
Denoting $g(x)=x^{p+\alpha}-B(A+x)^{p+\alpha-1}$, it is easy to see that
\begin{equation*}
g(0)=-BA^{p+\alpha-1}<0\leq g(c),
\end{equation*}
where $c=\max\left\{A,2^{p+\alpha-1}B\right\}$. Since $g(\,.\,)$ is continuous, we can use the bisection Method to compute an approximation to $a^{*}\in (0,c]$ such that $g(a^{*})=0$. As can be seen in the proof of Theorem \ref{thm:B4.2}, our convergence results only require 
\begin{equation*}
0<a_{t}^{p+\alpha}\leq\dfrac{1}{2^{(3p-1)}}\left[\dfrac{(p-1)!}{M_{t}}\right](A_{t}+a_{t})^{p+\alpha-1}.
\end{equation*}
\end{remark}

The next result establishes the relationship between the
estimating functions $\psi_{t}(\cdot)$ and the objective
function $f(\cdot)$.
\begin{lemma}
\label{lem:4.1}
For all $t\geq 0$,
\beq\label{eq:4.10}
\ba{rcl}
\psi_{t}(x) & \leq & A_{t}f(x)+{1\over
(p+\alpha)}\|x-x_{0}\|^{p+\alpha},\,\,\forall x\in\mathbb{E}.
\ea
\eeq
\end{lemma}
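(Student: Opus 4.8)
The plan is to prove \eqref{eq:4.10} by induction on $t$, exploiting the additive recursion \eqref{eq:4.9} for $\psi_{t+1}$ together with the update $A_{t+1}=A_{t}+a_{t}$ and the convexity of $f$.

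First I would verify the base case $t=0$. By the initialization we have $A_{0}=0$ and $\psi_{0}(x)=\frac{1}{p+\alpha}\|x-x_{0}\|^{p+\alpha}$, so the right-hand side of \eqref{eq:4.10} equals $\frac{1}{p+\alpha}\|x-x_{0}\|^{p+\alpha}=\psi_{0}(x)$ and the inequality holds (in fact with equality) for every $x\in\mathbb{E}$.

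For the inductive step, assume \eqref{eq:4.10} holds at index $t$. Applying the definition \eqref{eq:4.9} and then the induction hypothesis gives
\begin{equation*}
\psi_{t+1}(x)\leq A_{t}f(x)+\frac{1}{p+\alpha}\|x-x_{0}\|^{p+\alpha}+a_{t}\left[f(x_{t+1})+\langle\nabla f(x_{t+1}),x-x_{t+1}\rangle\right].
\end{equation*}
The key step is then the convexity of $f$, which provides the supporting-hyperplane bound $f(x_{t+1})+\langle\nabla f(x_{t+1}),x-x_{t+1}\rangle\leq f(x)$ for all $x$; since $a_{t}>0$, multiplying by $a_{t}$ and substituting yields
\begin{equation*}
\psi_{t+1}(x)\leq (A_{t}+a_{t})f(x)+\frac{1}{p+\alpha}\|x-x_{0}\|^{p+\alpha}=A_{t+1}f(x)+\frac{1}{p+\alpha}\|x-x_{0}\|^{p+\alpha},
\end{equation*}
where the last equality is the update rule $A_{t+1}=A_{t}+a_{t}$ from Step 2 of Algorithm 3. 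This closes the induction and establishes \eqref{eq:4.10} for all $t\geq 0$.

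I do not expect any real obstacle here: this is the standard estimating-sequence bookkeeping, and the only analytic ingredient is the first-order convexity inequality. The one point worth stating explicitly is that $a_{t}>0$ (guaranteed by \eqref{eq:4.3} and $M_{t}>0$), so that the convexity bound can be scaled without reversing the inequality; everything else is purely algebraic manipulation of the recursions.
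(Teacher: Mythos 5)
Your proposal is correct and follows essentially the same argument as the paper: induction on $t$, with the base case $A_{0}=0$ giving equality, and the inductive step combining the recursion \eqref{eq:4.9}, the supporting-hyperplane inequality from convexity of $f$ (scaled by $a_{t}>0$), and the update $A_{t+1}=A_{t}+a_{t}$. The only cosmetic difference is the order in which you apply the induction hypothesis and the convexity bound, which is immaterial.
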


\begin{proof}
We prove this result by induction in $t$. Since $A_0 =
0$, for all $x\in\mathbb{E}$,
$$
\ba{rcl}
\psi_{0}(x)&=&{1 \over (p+\alpha)}\|x-x_{0}\|^{p+\alpha} =
A_{0}f(x)+{1 \over (p+\alpha)}\|x-x_{0}\|^{p+\alpha},
\ea
$$
that is, (\ref{eq:4.10}) is true for $t=0$. Suppose that
(\ref{eq:4.10}) is true for some $t\geq 0$. Then 
(\ref{eq:4.9}) and the convexity of $f$ imply that, for all
$x\in\mathbb{E}$,
$$
\ba{rcl}
\psi_{t+1}(x)& =  &\psi_{t}(x)+a_{t}\left[f(x_{t+1})+ \la
\nabla f(x_{t+1}),x-x_{t+1}\ra\right]\\
&\leq&\psi_{t}(x)+a_{t}f(x)
\\
&\leq& (A_{t}+a_{t})f(x)+{\|x-x_{0}\|^{p+\alpha} \over
(p+\alpha)}
\\
&  = &A_{t+1}f(x)+{\|x-x_{0}\|^{p+\alpha}\over (p+\alpha)}.
\ea
$$
Thus, (\ref{eq:4.10}) is also true for $t+1$, and the
proof is completed.
\end{proof}

The theorem below establishes the global convergence rate
for Algorithm 3.
%
\begin{theorem}
\label{thm:B4.2} Assume that H1 is true and let the sequence
$\left\{x_{t}\right\}_{t=0}^{T}$ be generated by
Algorithm 3. Then, for $t=2,\ldots,T$,
\begin{equation}
f(x_{t})-f(x^{*})\leq\dfrac{2^{3p-1}M_{\nu}(p+\alpha)^{p+\alpha-1}\|x_{0}-x^{*}\|^{p+\alpha}}{(p-1)!(t-1)^{p+\alpha}}.
\label{eq:4.13}
\end{equation}

\end{theorem}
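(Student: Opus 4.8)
The plan is to run the estimating-sequences argument, whose crux is the auxiliary inductive claim
$$
A_t f(x_t)\ \leq\ \psi_t^{*}\ \equiv\ \min_{x\in\E}\psi_t(x),\qquad t\geq 0 .
$$
The base case $t=0$ is immediate, since $A_0=0$ and $\psi_0^{*}=\psi_0(x_0)=0$. For the inductive step I would use that $\psi_t$ is the sum of the power term $\frac{1}{p+\alpha}\|x-x_0\|^{p+\alpha}$ and affine functions, so that, with $v_t=\arg\min\psi_t$, it inherits the uniform convexity of the power of the norm:
$$
\psi_t(x)\ \geq\ \psi_t^{*}+\frac{2^{2-(p+\alpha)}}{p+\alpha}\,\|x-v_t\|^{p+\alpha},\qquad x\in\E .
$$
Substituting this bound, the inductive hypothesis $\psi_t^{*}\geq A_t f(x_t)$, and the convexity inequality $f(x_t)\geq f(x_{t+1})+\langle\nabla f(x_{t+1}),x_t-x_{t+1}\rangle$ into the recursion (\ref{eq:4.9}), I obtain $\psi_{t+1}^{*}\geq A_{t+1}f(x_{t+1})$ plus a residual minimization in $x$.

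The second step is to evaluate that residual. Using $A_{t+1}y_t=A_t x_t+a_t v_t$ (from (\ref{eq:4.1})--(\ref{eq:4.2})), the linear terms collapse to $A_{t+1}\langle\nabla f(x_{t+1}),y_t-x_{t+1}\rangle+a_t\langle\nabla f(x_{t+1}),x-v_t\rangle$, and minimizing $a_t\langle\nabla f(x_{t+1}),x-v_t\rangle+\frac{2^{2-(p+\alpha)}}{p+\alpha}\|x-v_t\|^{p+\alpha}$ in closed form yields a negative multiple of $\|\nabla f(x_{t+1})\|_{*}^{(p+\alpha)/(p+\alpha-1)}$. The descent condition (\ref{eq:4.7}) lower-bounds the surviving term $A_{t+1}\langle\nabla f(x_{t+1}),y_t-x_{t+1}\rangle$ by exactly such a power of $\|\nabla f(x_{t+1})\|_{*}$, and after inserting the definition (\ref{eq:4.3}) of $a_t$ (which gives $a_t^{(p+\alpha)/(p+\alpha-1)}=[2^{-(3p-1)}(p-1)!/M_t]^{1/(p+\alpha-1)}A_{t+1}$), both the power of $\|\nabla f\|_{*}$ and the common factor $A_{t+1}$ cancel. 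The claim then reduces to the purely numerical inequality $2^{2-(p+\alpha)}\cdot 2^{3p-1}\geq 4^{\,p+\alpha-1}$, which is equivalent to $\alpha\leq 1$ and hence always holds. This cancellation, where the constant $2^{3p-1}$ in (\ref{eq:4.3}) is precisely calibrated to the uniform-convexity parameter, is the main obstacle and the place where the argument is tight.

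Granting the claim, I would combine it with Lemma \ref{lem:4.1} evaluated at $x^{*}$,
$$
A_t f(x_t)\ \leq\ \psi_t^{*}\ \leq\ \psi_t(x^{*})\ \leq\ A_t f(x^{*})+\frac{1}{p+\alpha}\|x^{*}-x_0\|^{p+\alpha},
$$
to get $f(x_t)-f(x^{*})\leq\|x_0-x^{*}\|^{p+\alpha}/[(p+\alpha)A_t]$. It then remains to bound $A_t$ from below. Writing $r=p+\alpha$ and using $M_t\leq M_\nu$ in (\ref{eq:4.3}) gives $a_t=A_{t+1}-A_t\geq[(p-1)!/(2^{3p-1}M_\nu)]^{1/r}A_{t+1}^{(r-1)/r}$; concavity of $s\mapsto s^{1/r}$ yields $A_{t+1}^{1/r}-A_t^{1/r}\geq\frac{1}{r}A_{t+1}^{1/r-1}(A_{t+1}-A_t)\geq\frac{1}{r}[(p-1)!/(2^{3p-1}M_\nu)]^{1/r}$, and telescoping from $t=1$ (with $A_1\geq 0$) gives $A_t\geq(p-1)!(t-1)^{r}/[2^{3p-1}M_\nu r^{r}]$. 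Substituting this into the previous display and cancelling one factor of $(p+\alpha)$ produces (\ref{eq:4.13}) for $t=2,\ldots,T$.
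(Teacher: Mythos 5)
Your proposal is correct, and its skeleton is the same as the paper's: the inductive claim $A_t f(x_t)\leq\psi_t^{*}$, uniform convexity of the regularizer $\frac{1}{p+\alpha}\|x-x_0\|^{p+\alpha}$, the collapse of linear terms via $A_{t+1}y_t=A_tx_t+a_tv_t$, the descent condition (\ref{eq:4.7}) paired with the calibration (\ref{eq:4.3}), and finally Lemma \ref{lem:4.1} at $x^{*}$ plus a lower bound on $A_t$. You differ in two technical spots, both to your advantage. First, where the paper cites Lemma 2 of \cite{NES3} to certify nonnegativity of the residual, you minimize $a_t\langle\nabla f(x_{t+1}),x-v_t\rangle+\frac{\omega}{p+\alpha}\|x-v_t\|^{p+\alpha}$ in closed form; this is the same computation, just inlined, and your reduction to $2^{2-(p+\alpha)}\cdot 2^{3p-1}\geq 4^{p+\alpha-1}$ (i.e.\ $\alpha\leq 1$) checks out (you even drop the harmless factor $\frac{p+\alpha-1}{p+\alpha}<1$, so your sufficient condition is slightly stronger than needed). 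Second, and more substantively, for the growth of $A_t$ the paper invokes Lemma A.4 of \cite{GN2} (the recurrence $B_{t+1}-B_t\geq B_{t+1}^{(p+\alpha-1)/(p+\alpha)}$ with $B_1\geq 1$), which yields $A_t\geq\frac{1}{\tilde M}\bigl[\frac{1}{p+\alpha}(\frac12)^{(p+\alpha-1)/(p+\alpha)}\bigr]^{p+\alpha}(t-1)^{p+\alpha}$ and hence carries an extra factor $2^{p+\alpha-1}$ that the stated bound (\ref{eq:4.13}) silently absorbs; your concavity-of-$s^{1/r}$ telescoping gives $A_t\geq\frac{(p-1)!}{2^{3p-1}M_\nu}\frac{(t-1)^{p+\alpha}}{(p+\alpha)^{p+\alpha}}$, which is self-contained, does not even need $B_1\geq 1$, and reproduces the constant in (\ref{eq:4.13}) exactly. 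So your route is marginally more elementary and tighter in the constants than the paper's own.
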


\begin{proof}
Let us prove by induction that
\begin{equation}
A_{t}f(x_{t})\leq\psi_{t}^{*}\equiv\min_{x\in\E}\psi_{t}(x).
\label{eq:4.16}
\end{equation}
Since $A_{0}=0$, we have $A_{0}f(x_{0})=0=\min_{x\in E}\psi_{0}(x)$. Thus, (\ref{eq:4.16}) is true for $t=0$. Assume that it is true for some $t\geq 0$. Note that for any $x\in E$ we have
$$
\ba{rcl}
\psi_{t}(x)&=&\sum_{i=0}^{t-1}a_{i}\left[f(x_{i+1})+ \la
\nabla f(x_{i+1}),x-x_{i+1}\ra\right]+{\|x-x_{0}\| \over p+\alpha}^{p+\alpha}\\
&\equiv&\ell_{t}(x)+{1\over
p+\alpha}\|x-x_{0}\|^{p+\alpha},\,\,\text{for all}\,\, t\geq 1.
\ea
$$
Note that $\ell_{t}(x)$ is a linear function. Moreover, by
Lemma 4 in \cite{NES3}, function \\ ${1\over
(p+\alpha)}\|x-x_{0}\|^{p+\alpha}$ is uniformly convex of degree
$p+\alpha$ with parameter $2^{-(p+\alpha-2)}$. Thus,
$\psi_{t}(x)$ is also a uniformly convex function of
degree $p+\alpha$ with parameter
$2^{-(p+\alpha-2)}$. Therefore, Lemma A.2 in \cite{GN2}
and the induction assumption imply that
\begin{equation*}
\psi_{t}(x)\geq 
\psi_{t}^{*}+{2^{-(p+\alpha-1)}
\over p+\alpha}\|x-v_{t}\|^{p+\alpha}\geq  A_{t}f(x_{t})+{2^{-(p+\alpha-1)}\over p+\alpha}\|x-v_{t}\|^{p+\alpha}.
\end{equation*}
Thus,
$$
\ba{rcl}
\psi_{t+1}^{*}&  = &\min\limits_{x\in\E}\left\{\psi_{t}(x)+a_{t}\left[f(x_{t+1})+\la \nabla f(x_{t+1}),x-x_{t+1}\ra\right]\right\}\nonumber\\
\\
&\geq&\min\limits_{x\in\E}
\{A_{t}f(x_{t})+{2^{-(p+\alpha-2)}
\over
(p+\alpha)}\|x-v_{t}\|^{p+\alpha}\\
\\
& & +a_{t}[f(x_{t+1})+\langle \nabla
f(x_{t+1}),x-x_{t+1}\rangle]\}.
\ea
$$
Since $f$ is convex and differentiable, we have
$$
\ba{rcl}
f(x_{t}) &\geq &f(x_{t+1})+\la \nabla
f(x_{t+1}),x_{t}-x_{t+1}\ra.
\ea
$$
Then, substituting this inequality above, we obtain
$$
\ba{rcl}
\psi_{t+1}^{*} & \geq & \min\limits_{x\in\E}\{A_{t+1}f(x_{t+1}) +\la \nabla
f(x_{t+1}),A_{t}x_{t}-A_{t}x_{t+1}\ra\\
\\
& & +a_{t}\la \nabla f(x_{t+1}),x-x_{t+1}\ra+
{2^{-(p+\alpha-1)}\over (p+\alpha)}\|x-v_{t}\|^{p+\alpha}.
\ea
$$
Note that $y_{t}=(1-\gamma_{t})x_{t}+\gamma_{t}v_{t}=
{A_{t} \over A_{t+1}}x_{t}+{a_{t}\over A_{t+1}}v_{t}$.
Therefore, $A_{t}x_{t}=A_{t+1}y_{t}-a_{t}v_{t}$, and
$$
\ba{rcl}
\psi_{t+1}^{*} & \geq & \min\limits_{x\in\E}\{A_{t+1}f(x_{t+1}) +\la \nabla
f(x_{t+1}),A_{t+1}y_{t}-a_{t}v_{t}-A_{t}x_{t+1}
\ra\\
\\
& &  +a_{t}\la \nabla f(x_{t+1}),x-x_{t+1}\ra
+{2^{-(p+\alpha-1)}\over (p+\alpha)}\|x-v_{t}\|^{p+\alpha}.
\ea
$$
Moreover, $A_{t+1}x_{t+1}=A_{t}x_{t+1} +a_{t}x_{t+1}$, and so
$$
\ba{rcl}
\psi_{t+1}^{*}&\geq&\min\limits_{x\in\E}\{A_{t+1} f(x_{t+1})+A_{t+1}\la
\nabla f(x_{t+1}),y_{t}-x_{t+1}\ra\\
\\
& & +a_{t}\la \nabla f(x_{t+1}),x-v_{t}
\ra+{2^{-(p+\alpha-1)}\over (p+\alpha)}\|x-v_{t}\|^{p+\alpha}\\
&\geq& A_{t+1}f(x_{t+1})+\min\limits_{x\in\E} \{A_{t+1}{1\over 4}\left[{(p-1)! \over M_{t}}\right]^{ {1\over
p+\alpha-1}} \|\nabla
f(x_{t+1})\|_{*}^{ {p+\alpha\over p+\alpha-1}}\\
\\
& & +a_{t}\la \nabla f(x_{t+1}),x-v_{t}\ra+
{2^{-(p+\alpha-1)}\over (p+\alpha)}\|x-v_{t}\|^{p+\alpha}\},
\ea
$$
where the last inequality is due to (\ref{eq:4.7}). Thus,
to prove that (\ref{eq:4.16}) is true for $t+1$, it is
enough to show that
\beq\label{eq:4.17}
\ba{c}
A_{t+1}{1\over 4}\left[{(p-1)! \over M_{t}}\right]^{ {1\over
p+\alpha-1}} \|\nabla
f(x_{t+1})\|_{*}^{ {p+\alpha\over p+\alpha-1}}+a_{t} \la
\nabla f(x_{t+1}),x-v_{t}\ra\\
+ {2^{-(p+\alpha-1)}\over (p+\alpha)}\|x-v_{t}\|^{p+\alpha}\}\geq 0
\ea
\eeq
for all $x\in\mathbb{E}$. Using Lemma 2 in \cite{NES3} with
$r=p+\nu$, $s=a_{t}\nabla f(x_{t+1})$ and
$\omega=2^{-(p+\alpha-1)}$, we see that a
sufficient condition for (\ref{eq:4.17}) is
$$
\ba{rcl}
A_{t+1}{1\over 4}\left[{(p-1)! \over M_{t}}\right]^{ {1\over
p+\alpha-1}} \|\nabla
f(x_{t+1})\|_{*}^{ {p+\alpha\over p+\alpha-1}} &
\geq & {\left(p+\alpha-1\over p+\alpha\right)}2^{ {p+\alpha-2\over p+\alpha-1}}a_{t}^{
{p+\alpha\over p+\alpha-1}}\|\nabla f(x_{t+1})\|_{*}^{{p+\alpha\over p+\alpha-1}},
\ea
$$
which is equivalent to
$$
\ba{rcl}
a_{t}^{p+\alpha} & \leq &
2\left({p+\alpha \over p+\alpha-1}\right)^{p+\alpha-1}
{\left(1\over 8\right)}^{p+\alpha-1}\left[{(p-1)!\over M_{t}}\right]A_{t+1}^{p+\alpha-1}.
\ea
$$
Note that,
$
2\left({p+\alpha \over p+\alpha-1}\right)^{p+\alpha-1}
\left(1 \over 8\right)^{p+\alpha-1}\geq {1\over 2^{(3p-1)}}
$. Therefore, by (\ref{eq:4.3}) we have

\begin{eqnarray*}
a_{t}^{p+\alpha}&=&\dfrac{1}{2^{(3p-1)}}\left[\dfrac{(p-1)!}{M_{t}}\right]\left(A_{t}+a_{t}\right)^{p+\alpha-1}\\
&\leq&
2\left({p+\alpha \over p+\alpha-1}\right)^{p+\alpha-1}
{\left(1\over 8\right)}^{p+\alpha-1}\left[{(p-1)!\over M_{t}}\right]A_{t+1}^{p+\alpha-1}.
\end{eqnarray*}
Thus (\ref{eq:4.16}) is true for $t+1$, completing the induction argument.

Let us now estimate the growth of the coefficients
$A_{t}$. Since $M_{t}\leq M_{\nu}$ for all $t=0,\ldots,T$, by (\ref{eq:4.3}) we get $
a_{t}^{p+\alpha}\geq\dfrac{1}{\tilde{M}}(A_{t}+a_{t})^{p+\alpha-1}
$ with
\begin{equation}
\tilde{M}={2^{(3p-1)}M_{\nu}\over (p-1)!}.
\label{eq:4.18}
\end{equation}
Consequently,
\beq\label{eq:4.19}
\ba{rcl}
A_{t+1}-A_{t} & = & a_t \; \geq \;
\left({1 \over \tilde{M}}\right)^{ {1\over p+\alpha}} A_{t+1}^{ {p+\alpha-1 \over
p+\alpha}}.
\ea
\eeq
Now, denoting $B_{t}=\tilde{M}A_{t}$ for all
$t\geq 0$, it follows from (\ref{eq:4.19}) that
$$
\ba{rcl}
B_{t+1}-B_{t} & \geq & B_{t+1}^{ {p+\alpha-1\over p+\alpha}}.
\ea
$$
Then, by Lemma A.4 in \cite{GN2}, 
we have
$$
\ba{rcl}
B_{t} & \geq &
\left[\left({1 \over p+\alpha}\right)\left(
{B_{1}^{ {1\over p+\alpha}}\over B_{1}^{ {1\over
p+\alpha}}+1}\right)^{ {p+\alpha-1\over p+\alpha}}
\right]^{p+\alpha}(t-1)^{p+\alpha} \quad \forall t\geq 2.
\ea
$$
Note that $A_{1}\geq {1 \over 2M}$. Thus,
$B_{1}\geq 1$ and consequently
$$
\ba{rcl}
B_{t} & \geq &
\left[{1\over (p+\alpha)}\left(\half\right)^{ {p+\alpha-1\over p+\alpha}}
\right]^{p+\alpha}(t-1)^{p+\alpha}.
\ea
$$
Therefore, for all $t\geq 2$, we have
\begin{equation}
A_{t}\geq {1\over \tilde{M}}\left[{1\over (p+\alpha)}\left(\dfrac{1}{2}\right)^{
{p+\alpha-1\over p+\alpha}} \right]^{p+\alpha}(t-1)^{p+\alpha}.
\label{eq:4.20}
\end{equation}
Finally, by (\ref{eq:4.16}) and Lemma \ref{lem:4.1}, for
$t\geq 0$, we have
$$
\ba{rcl}
A_{t}f(x_{t}) & \leq & \psi_{t}^{*} \; \leq \;
A_{t}f(x^{*})+{1 \over
p+\alpha}\|x^{*}-x_{0}\|^{p+\alpha}.
\ea
$$
Hence, $A_{t}(f(x_{t})-f(x^{*}))\leq{1
\over 2+\nu}\|x^{*}-x_{0}\|^{2+\nu}$, and (\ref{eq:4.13})
follows immediately from (\ref{eq:4.18}) and (\ref{eq:4.20}).
\end{proof}

If we assume that $\nu$ and $H_{f,p}(\nu)$ are known, then, by Lemma \ref{lem:A.6}, we can set
\begin{equation*}
M_{t}=M_{\nu}\equiv (p+\nu-1)(H_{f,p}(\nu)+\theta (p-1)!).
\end{equation*}
Here, by (\ref{eq:3.1}) the corresponding version
of Algorithm 3 takes at most\\
$\mathcal{O}(\epsilon^{-1/(p+\nu)})$ iterations to
generate $x_{t}$ such that $
f(x_{t})-f(x^{*})\leq\epsilon. $ For problems in which
$H_{f,p}(\nu)$ is not known, let us consider the following
adaptive version of Algorithm 3.

\begin{mdframed}
\noindent\textbf{Algorithm 4. Adaptive Accelerated Tensor Method}
\\[0.2cm]
\noindent\textbf{Step 0.} Choose $x_{0}\in\E$, $H_{0}>0$, and $\theta\geq 0$. Set $\alpha$ by (\ref{eq:3.1}) and define function $\psi_{0}(x)=\frac{1}{p+\alpha}\|x-x_{0}\|^{p+\alpha}$. Set $v_{0}=x_{0}$, $A_{0}=0$, and $t:=0$.\\
\noindent\textbf{Step 1.} Set $i:=0$.\\
\noindent\textbf{Step 1.1.} Compute the coefficient $a_{t,i}>0$ by solving equation
\begin{equation*}
a_{t,i}^{p+\alpha}=\dfrac{1}{2^{(3p-1)}}\left[\dfrac{(p-1)!}{2^iH_{t}}\right](A_{t}+a_{t,i})^{p+\alpha-1}.
\end{equation*}
\noindent\textbf{Step 1.2.} Set $\gamma_{t,i}=\dfrac{a_{t,i}}{A_{t}+a_{t,i}}$ and compute vector $y_{t,i}=(1-\gamma_{t,i})x_{t}+\gamma_{t,i}v_{t}$.\\
\noindent\textbf{Step 1.3} Compute an approximate solution $x_{t,i}^{+}$ to
$
\min_{x\in\E}\Omega_{y_{t,i},p,2^{i}H_{t}}^{(\alpha)}(x),
$
such that
\begin{equation*}
\Omega_{y_{t,i},p,2^{i}H_{t}}^{(\alpha)}(x_{t,i}^{+})\leq f(y_{t,i})\quad\text{and}\quad \|\nabla\Omega_{y_{i,t},p,2^{i}H_{t}}^{(\alpha)}(x_{t,i}^{+})\|_{*}\leq\theta\|x_{t,i}^{+}-y_{t,i}\|^{p+\alpha-1}.
\end{equation*}
\noindent\textbf{Step 1.4.} If condition
\begin{equation*}
\la \nabla f(x_{t,i}^{+}),y_{t,i}-x_{t,i}^{+}\ra\geq\dfrac{1}{4}\left[\dfrac{(p-1)!}{2^{i}H_{t}}\right]^{\frac{1}{p+\alpha-1}}\|\nabla f(x_{t,i}^{+})\|_{*}^{\frac{p+\alpha}{p+\alpha-1}},
\end{equation*}
set $i_{t}:=i$ and go to Step 2. Otherwise, set $i:=i+1$ and go back to Step 1.1.\\
\noindent\textbf{Step 2.} Set $x_{t+1}=x_{t,i_{t}}^{+}$, $y_{t}=y_{t,i_{t}}$, $a_{t}=a_{t,i_{t}}$ and $\gamma_{t}=\gamma_{t,i_{t}}$. Define $A_{t+1}=A_{t}+a_{t}$ and $H_{t+1}=2^{i_{t}-1}H_{t}$.\\
\noindent\textbf{Step 3.} Define $\psi_{t+1}(.)$ by (\ref{eq:4.9}) and compute $v_{t+1}$ by (\ref{eq:4.8}).\\
\noindent\textbf{Step 4.} Set $t:=t+1$ and go back to Step 1.
\end{mdframed}

Note that Algorithm 4 is a particular case of Algorithm 3 in which
\begin{equation*}
M_{t}=2^{i}H_{t}\quad \forall t\geq 0.
\end{equation*}
Let us define the following function of $\epsilon>0$:
\begin{equation}
\tilde{N}_{\nu}(\epsilon)=\left\{\begin{array}{ll}
(p+\nu-1)(H_{f,p}(\nu)+\theta(p-1)!),&\text{if}\,\,\alpha = \nu,\\
\max\left\{4\theta(p-1)!,(4H_{f,p}(\nu))^{\frac{p}{p+\nu-1}}
\left(\dfrac{4R(\epsilon)}{\epsilon}\right)^{\frac{1-\nu}{p+\nu-1}}\right\},
&\text{if}\,\,\alpha = 1.
                                    \end{array}
                             \right.
\label{eq:4.21}
\end{equation}

The next lemma provides upper bounds on $H_{t}$ and on the number of calls of the oracle in Algorithm 4.

\begin{lemma}
\label{lem:B4.1}
Suppose that H1 and H2 are true. Given $\epsilon>0$, assume that $\left\{x_{t}\right\}_{t=0}^{T}$ is a sequence generated by Algorithm 4 such that
\begin{equation}
f(x_{0})-f(x^{*})\geq\epsilon
\label{eq:4.22}
\end{equation}
and
\begin{equation}
f(x_{t,i}^{+})-f(x^{*})\geq\epsilon,\quad  i=0,\ldots,i_{t}\,\,\text{and}\,\, t=0,\ldots,T.
\label{eq:4.23}
\end{equation}
Then
\begin{equation}
H_{t}\leq\max\left\{H_{0},\tilde{N}_{\nu}(\epsilon)\right\}\quad\text{for}\,\,t=0,\ldots,T.
\label{eq:4.24}
\end{equation}
Moreover, the number $O_{T}$ of calls of the oracle after $T$ iterations is bounded as follows:
\begin{equation}
O_{T}\leq 2T+\log_{2}\max\left\{H_{0},\tilde{N}_{\nu}(\epsilon)\right\}-\log_{2}H_{0}.
\label{eq:4.25}
\end{equation}
\end{lemma}

\begin{proof}
Let us prove by induction that the scaling coefficients $H_{t}$ in Algorithm 4 satisfy (\ref{eq:4.24}). This is obvious for $t=0$. Assume that (\ref{eq:4.24}) is true for some $t\geq 0$. If $\alpha=\nu$, it follows from Lemma \ref{lem:A.6} that the final value $2^{i_{t}}H_{t}$ cannot be bigger than
\begin{equation*}
2\left[(p+\nu-1)(H_{f,p}(\nu)+\theta(p-1)!)\right],
\end{equation*}
since otherwise we should stop the line-search earlier. Thus,
\begin{equation*}
H_{t+1}=\dfrac{1}{2}2^{i_{t}}H_{t}\leq (p+\nu-1)(H_{f,p}(\nu)+\theta(p-1)!)\leq\max\left\{\tilde{N}_{\nu}(\epsilon),H_{0}\right\},
\end{equation*}
that is, (\ref{eq:4.25}) holds for $t+1$. On the other hand, suppose that $\alpha=1$. In view of Lemma A.5 in \cite{GN2}, at any trial point $x_{t,i}^{+}$ we have
\begin{equation*}
\|\nabla f(x_{t,i}^{+})\|_{*}\geq\dfrac{\epsilon}{R(\epsilon)}.
\end{equation*}
Thus, it follows from Lemma \ref{lem:A.7} that
\begin{equation*}
2^{i_{t}}H_{t}\leq 2\max\left\{4\theta(p-1)!,(4H_{f,p}(\nu))^{\frac{p}{p+\nu-1}}\left(\dfrac{4R(\epsilon)}{\epsilon}\right)^{\frac{1-\nu}{1+\nu}}\right\}\leq 2\max\left\{\tilde{N}_{\nu}(\epsilon),H_{0}\right\}.
\end{equation*}
Consequently, we also have $H_{t+1}\leq\max\left\{M_{\nu}(\epsilon),H_{0}\right\}$; i.e., (\ref{eq:4.24}) holds for $t+1$. This completes the induction argument. Finally, as in the proof of Lemma 3.3, from (\ref{eq:4.24}) we get (\ref{eq:4.25}).
\end{proof}

Now we can prove the following convergence result for
Algorithm 4.

\begin{theorem}
Suppose that H1 and H2 are true. Given $\epsilon\in(0,1)$,
assume that $\left\{x_{t}\right\}_{t=0}^{T}$ is a sequence
generated by Algorithm such that (\ref{eq:4.22}) and
(\ref{eq:4.23}) hold. Then
\begin{equation}
f(x_{t})-f(x^{*})\leq\dfrac{2^{3p}\max\left\{\tilde{N}_{\nu}
(\epsilon),H_{0}\right\}(p+\alpha)^{p+\alpha-1}\|x_{0}-x^{*}\|^{p+\alpha}}
{(p-1)!(t-1)^{p+\alpha}}, \quad 2 \leq t \leq T.
\label{eq:4.26}
\end{equation}
Consequently,
\begin{equation}
T\leq 1+\left[\dfrac{2^{3p}\max\left\{H_{0},(p+\nu-1)(H_{f,p}(\nu)+\theta(p-1)!)\right\}(p+\nu)^{p+\nu-1}\|x_{0}-x^{*}\|^{p+\nu}}{(p-1)!}\right]^{\frac{1}{p+\nu}}\left(\dfrac{1}{\epsilon}\right)^{\frac{1}{p+\nu}}
\label{eq:4.27}
\end{equation}
if $\nu$ is known (i.e., $\alpha=\nu$), and
\begin{equation}
T\leq 1+\left[\dfrac{2^{3p}\max\left\{H_{0},4\theta(p-1)!,(4H_{f,p}(\nu))^{\frac{p}{p+\nu-1}}\left(4D_{0}\right)^{\frac{1-\nu}{p+\nu-1}}\right\}(p+1)^{p}\|x_{0}-x^{*}\|^{p+1}}{(p-1)!}\right]^{\frac{1}{p+1}}\left(\dfrac{1}{\epsilon}\right)^{\frac{p}{(p+1)(p+\nu-1)}}
\label{eq:4.28}
\end{equation}
if $\nu$ is unknown (i.e., $\alpha=1$).
\end{theorem}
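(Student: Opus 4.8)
The plan is to exploit the observation recorded just after the statement of Algorithm 4, namely that it is a particular instance of Algorithm 3 with scaling coefficient $M_{t}=2^{i_{t}}H_{t}$. From Step 2 of Algorithm 4 we have $H_{t+1}=2^{i_{t}-1}H_{t}$, hence $M_{t}=2^{i_{t}}H_{t}=2H_{t+1}$. Combining this identity with the bound (\ref{eq:4.24}) of Lemma \ref{lem:B4.1} gives $M_{t}\leq 2\max\{H_{0},\tilde{N}_{\nu}(\epsilon)\}$ for every $t$. Thus Algorithm 4 meets the standing requirement of Algorithm 3 with the choice $M_{\nu}:=2\max\{H_{0},\tilde{N}_{\nu}(\epsilon)\}$, and I can invoke Theorem \ref{thm:B4.2} directly. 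Substituting this value of $M_{\nu}$ into (\ref{eq:4.13}) replaces the factor $2^{3p-1}M_{\nu}$ by $2^{3p}\max\{H_{0},\tilde{N}_{\nu}(\epsilon)\}$, which is exactly (\ref{eq:4.26}).

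To extract the iteration complexity I evaluate (\ref{eq:4.26}) at $t=T$. Since $x_{T}=x_{T-1,i_{T-1}}^{+}$, assumption (\ref{eq:4.23}) yields $f(x_{T})-f(x^{*})\geq\epsilon$. Chaining this lower bound with (\ref{eq:4.26}) at $t=T$ and solving for $T-1$ gives
$$
T\leq 1+\left[\frac{2^{3p}\max\{H_{0},\tilde{N}_{\nu}(\epsilon)\}(p+\alpha)^{p+\alpha-1}\|x_{0}-x^{*}\|^{p+\alpha}}{(p-1)!}\right]^{\frac{1}{p+\alpha}}\epsilon^{-\frac{1}{p+\alpha}}.
$$
It then remains only to specialize $\alpha$ and unfold $\tilde{N}_{\nu}(\epsilon)$ according to (\ref{eq:4.21}). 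When $\nu$ is known we have $\alpha=\nu$ and $\tilde{N}_{\nu}(\epsilon)=(p+\nu-1)(H_{f,p}(\nu)+\theta(p-1)!)$, which is independent of $\epsilon$; inserting this constant into the displayed bound produces (\ref{eq:4.27}) with its clean exponent $\epsilon^{-1/(p+\nu)}$.

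The delicate case, which I expect to be the main obstacle, is $\nu$ unknown ($\alpha=1$): here $\tilde{N}_{\nu}(\epsilon)$ carries the factor $(4R(\epsilon)/\epsilon)^{(1-\nu)/(p+\nu-1)}$ and so grows as $\epsilon\to 0$. I first apply $R(\epsilon)\leq D_{0}$ from (\ref{eq:3.12}) to eliminate $R(\epsilon)$, and then, since $\epsilon\in(0,1)$ and the exponent $(1-\nu)/(p+\nu-1)$ is nonnegative (because $0\leq\nu\leq 1$ and $p\geq 1$), so that $\epsilon^{-(1-\nu)/(p+\nu-1)}\geq 1$, I factor this power of $\epsilon$ out of the maximum to obtain
$$
\max\{H_{0},\tilde{N}_{\nu}(\epsilon)\}\leq\max\left\{H_{0},4\theta(p-1)!,(4H_{f,p}(\nu))^{\frac{p}{p+\nu-1}}(4D_{0})^{\frac{1-\nu}{p+\nu-1}}\right\}\epsilon^{-\frac{1-\nu}{p+\nu-1}}.
$$

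Inserting this estimate into the complexity bound with $p+\alpha=p+1$, the $\frac{1}{p+1}$-th power of the $\epsilon$-factor merges with the explicit $\epsilon^{-1/(p+1)}$, and the total exponent becomes
$$
-\frac{1}{p+1}-\frac{1-\nu}{(p+1)(p+\nu-1)}=-\frac{(p+\nu-1)+(1-\nu)}{(p+1)(p+\nu-1)}=-\frac{p}{(p+1)(p+\nu-1)},
$$
which is precisely the exponent appearing in (\ref{eq:4.28}). The only real care required is this exponent bookkeeping, together with keeping $\|x_{0}-x^{*}\|$ and $D_{0}$ in their distinct roles: the former enters through the estimating-sequence rate of Theorem \ref{thm:B4.2}, while $D_{0}$ arises solely from bounding $R(\epsilon)$.
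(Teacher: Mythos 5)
Your proposal is correct and follows essentially the same route as the paper's proof: bound $M_{t}=2^{i_{t}}H_{t}=2H_{t+1}\leq 2\max\{H_{0},\tilde{N}_{\nu}(\epsilon)\}$ via Lemma \ref{lem:B4.1}, invoke Theorem \ref{thm:B4.2} with this value of $M_{\nu}$ to get (\ref{eq:4.26}), then evaluate at $t=T$ using (\ref{eq:4.23}) and specialize $\tilde{N}_{\nu}(\epsilon)$ in the two cases, with $R(\epsilon)\leq D_{0}$ and $\epsilon\in(0,1)$ handling the universal case. The only difference is cosmetic: you make explicit the step of factoring $\epsilon^{-(1-\nu)/(p+\nu-1)}\geq 1$ out of the maximum, which the paper leaves implicit when passing from (\ref{eq:4.31}) to (\ref{eq:4.28}).
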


\begin{proof}
By Lemma 4.4, we have
\begin{equation*}
2^{i_{t}}H_{t}=2(2^{i_{t}-1}H_{t})=2H_{t+1}\leq 2\max\left\{H_{0},\tilde{N}_{\nu}(\epsilon)\right\},\,\,t=0,\ldots,T-1.
\end{equation*}
Then (\ref{eq:4.26}) follows directly from Theorem 4.3 with
\begin{equation*}
M_{\nu}=2\max\left\{H_{0},\tilde{N}_{\nu}(\epsilon)\right\}.
\end{equation*}
Now, combining (\ref{eq:4.26}) and (\ref{eq:4.23}) for $k=T$, we obtain
\begin{equation*}
\epsilon\leq\dfrac{2^{3p}\max\left\{H_{0},\tilde{N}_{\nu}(\epsilon)\right\}(p+\alpha)^{p+\alpha-1}\|x_{0}-x^{*}\|^{p+\alpha}}{(p-1)!(T-1)^{p+\alpha}}
\end{equation*}
and so,
\begin{equation}
T\leq 1+\left[\dfrac{2^{3p}\max\left\{H_{0},\tilde{N}_{\nu}(\epsilon)\right\}(p+\alpha)^{p+\alpha-1}\|x_{0}-x^{*}\|^{p+\alpha}}{\epsilon(p-1)!}\right]^{\frac{1}{p+\alpha}}.
\label{eq:4.29}
\end{equation}
If $\nu$ is known, then $\alpha=\nu$ and, by (\ref{eq:4.21}), we have
\begin{equation}
\tilde{N}_{\nu}(\epsilon)=(p+\nu-1)(H_{f,p}(\nu)+\theta(p-1)!).
\label{eq:4.30}
\end{equation}
Thus, combining (\ref{eq:4.29}) and (\ref{eq:4.30}), we get (\ref{eq:4.27}). On the other hand, if $\nu$ is unknown, then $\alpha=1$ and, by (\ref{eq:4.21}), (\ref{eq:3.12}) and $\epsilon\in (0,1)$, we have
\begin{eqnarray}
\tilde{N}_{\nu}(\epsilon)&=&\max\left\{4\theta(p-1)!,(4H_{f,p}(\nu))^{\frac{p}{p+\nu-1}}\left(\dfrac{4R(\epsilon)}{\epsilon}\right)^{\frac{1-\nu}{p+\nu-1}}\right\}\nonumber\\
&\leq &\max\left\{4\theta(p-1)!,(4H_{f,p}(\nu))^{\frac{p}{p+\nu-1}}\left(4D_{0}\right)^{\frac{1-\nu}{p+\nu-1}}\right\}\epsilon^{-\frac{1-\nu}{p+\nu-1}}.
\label{eq:4.31}
\end{eqnarray}
In this case, combining (\ref{eq:4.29}) and (\ref{eq:4.31}) we get (\ref{eq:4.28}).
\end{proof}

When $\nu=1$, bounds (\ref{eq:4.27}) and (\ref{eq:4.28}) have the same dependence on $\epsilon$. However, when $\nu\neq 1$, the bound of $\mathcal{O}\left(\epsilon^{-p/(p+1)(p+\nu-1)}\right)$ obtained for the universal scheme (i.e., Algorithm 4 with $\alpha=1$) is worse than the bound of $\mathcal{O}\left(\epsilon^{-1/(p+\nu)}\right)$ obtained for the nonuniversal scheme ($\alpha=\nu$). For high-order methods ($p\geq 2$), to the best of our knowledge, there is no simple procedure by which one can identify the level of smoothness $\nu$ of the $p$th derivatives (in general). Therefore, despite this gap in the complexity bounds, we believe that the automatic choice of the best function subclass in the universal scheme is a very attractive feature. Moreover, in the nonuniversal scheme, for any $\theta>0$ with $(p+\nu-1)(H_{f,p}(\nu)+\theta(p-1)!)>H_{0}$, the corresponding right-hand side of (4.22) has an additional term
\begin{equation*}
\left[\dfrac{2^{3p}\theta (p-1)!(p+\nu)^{p+\nu-1}\|x_{0}-x^{*}\|^{p+\nu}}{(p-1)!}\right]^{\frac{1}{p+\nu}}\left(\frac{1}{\epsilon}\right)^{\frac{1}{p+\nu}}
\end{equation*}
in comparison to its value when $\theta=0$. In contrast, in the accelerated universal scheme, for any $\theta$ in the interval
\begin{equation*}
0<\theta<\min\left\{\dfrac{H_{0}}{4(p-1)!},\dfrac{H_{f,p}(\nu)^{\frac{p}{p+\nu-1}}D_{0}^{\frac{1-\nu}{p+\nu-1}}}{(p-1)!}\right\},
\end{equation*}
the corresponding right-hand side in (4.23) is the same as for $\theta=0$. In this sense, it appears that the accelerated universal scheme is more robust than the accelerated nonuniversal scheme in terms of the inexact solution of the auxiliary problems.

\section{Lower complexity bounds under H\"{o}lder condition}

In this section we investigate how much the convergence rates of our tensor methods can be improved with respect to problems satisfying H1. Specifically, we derive lower complexity bounds for $p$-order tensor methods applied to the problem (\ref{eq:2.1}), where the objective $f$ is convex and $H_{f,p}(\nu)<+\infty$ for some $\nu\in [0,1]$.

\subsection{Hard functions and Lower Complexity Bounds}

For simplicity, let us consider $\E=\mathbb{R}^{n}$ and $B=I_{n}$. Given an approximation $\bar{x}$ for the solution of (\ref{eq:2.1}), $p$-order methods usually compute the next test point as $x^{+}=\bar{x}+\bar{h}$, where the search direction $\bar{h}$ is the solution of an auxiliary problem of the form
\begin{equation}
\min_{h\in\mathbb{R}^{n}}\,\phi_{a,\gamma,m}(h)\equiv \sum_{i=1}^{p}a^{(i)}D^{i}f(\bar{x})[h]^{i}+\gamma\|h\|^{m},
\label{eq:6.1}
\end{equation}
with $a\in\mathbb{R}^{p}$, $\gamma>0$, and $m>1$. Denote by $\Gamma_{\bar{x},f}(a,\gamma,m)$ the set of all stationary points of function $\phi_{a,\gamma,m}(\,.\,)$, and define the linear subspace
\begin{equation}
S_{f}(\bar{x})=\text{Lin}\left(\Gamma_{\bar{x},f}(a,\gamma,m)\,|\,a\in\mathbb{R}^{p},\,\gamma>0,\,m>1\right).
\label{eq:6.2}
\end{equation}
With this notation, we can characterize the class of $p$-order tensor methods by the following assumption.
\\[0.2cm]
\noindent\textbf{Assumption 1.} Given $x_{0}\in\mathbb{R}^{n}$, the method generates a sequence of test points $\left\{x_{k}\right\}_{k\geq 0}$ such that
\begin{equation}
x_{k+1}\in x_{0}+\sum_{i=0}^{k}S_{f}(x_{i}),\quad k\geq 0.
\label{eq:6.3}
\end{equation}

Given $\nu\in [0,1]$, our parametric family of difficult functions for $p$-order tensor methods is defined as
\begin{equation}
f_{k}(x)=\dfrac{1}{p+\nu}\left[\sum_{i=1}^{k-1}|x^{(i)}-x^{(i+1)}|^{p+\nu}+\sum_{i=k}^{n}|x^{(i)}|^{p+\nu}\right]-x^{(1)},\quad 2\leq k\leq n.
\label{eq:6.4}
\end{equation}
The next lemma establishes that for each $f_{k}(\,.\,)$ we have $H_{f_{k},p}(\nu)<+\infty$.

\begin{lemma}
\label{lem:6.1}
Given an integer $k\in [2,n]$, the $p$th derivative of $f_{k}(\,.\,)$ is $\nu$-H\"{o}lder continuous with
\begin{equation}
H_{f_{k},p}(\nu)=2^{\frac{2+\nu}{2}}\Pi_{i=1}^{p-1}(p+\nu-i).
\label{eq:6.5}
\end{equation}
\end{lemma}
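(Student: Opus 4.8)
The plan is to exploit the fact that $f_k$ is a separable sum of powers of linear forms, reducing everything to a one-dimensional calculation. First I would discard the linear term $-x^{(1)}$, whose $p$th derivative vanishes for $p\geq 2$ (and is constant, hence trivially $\nu$-H\"older, for $p=1$), and write $f_k(x)=\sum_{t}\eta(\langle a_t,x\rangle)$, where $\eta(\tau)=\frac{1}{p+\nu}|\tau|^{p+\nu}$ and each coefficient vector $a_t$ is either $e_i-e_{i+1}$ (for the difference terms) or $e_i$ (for the pure-power terms). Since each form $\langle a_t,\cdot\rangle$ is linear, the chain rule gives the clean identity $D^pf_k(x)[h]^p=\sum_t \eta^{(p)}(\langle a_t,x\rangle)\,\langle a_t,h\rangle^p$, so the whole problem hinges on the scalar function $\eta^{(p)}$.

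Next I would compute $\eta^{(p)}$ explicitly. Differentiating $|\tau|^{p+\nu}$ exactly $p$ times yields $\eta^{(p)}(\tau)=\bigl(\prod_{i=1}^{p-1}(p+\nu-i)\bigr)|\tau|^{\nu}\,\mathrm{sgn}(\tau)^p$, which already isolates the product $\prod_{i=1}^{p-1}(p+\nu-i)$ appearing in (\ref{eq:6.5}). The residual scalar map $\tau\mapsto|\tau|^{\nu}\mathrm{sgn}(\tau)^p$ is $\nu$-H\"older continuous on $\R$: for even $p$ this is the subadditivity estimate $\bigl||\tau_1|^{\nu}-|\tau_2|^{\nu}\bigr|\leq|\tau_1-\tau_2|^{\nu}$ coming from concavity of $t\mapsto t^{\nu}$, and the odd case follows from the analogous bound for the odd extension. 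This is the step responsible for the H\"older exponent $\nu$.

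Finally I would assemble the multidimensional constant. Writing $u=x-y$ and applying the two previous steps termwise gives $|D^pf_k(x)[h]^p-D^pf_k(y)[h]^p|\leq \bigl(\prod_{i=1}^{p-1}(p+\nu-i)\bigr)\sum_t |\langle a_t,u\rangle|^{\nu}|\langle a_t,h\rangle|^p$, so it remains to evaluate the supremum of $\sum_t|\langle a_t,u\rangle|^{\nu}|\langle a_t,h\rangle|^p$ over $\|h\|\leq 1$, divided by $\|u\|^{\nu}$. The geometric content is that $\|a_t\|\leq\sqrt2$, with equality exactly for the difference forms, and the power-of-two factor $2^{(2+\nu)/2}$ should emerge by aligning both $u$ and $h$ with a single direction $e_i-e_{i+1}$, together with a matching explicit choice of $x,y,h$ realizing the bound. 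I expect this last step to be the main obstacle: the difference forms overlap on consecutive coordinates, so they are not orthogonal and the summands cannot be maximized independently. A careful accounting of these tridiagonal interactions—controlling the cross-terms contributed by neighbouring forms while simultaneously exhibiting an extremal triple that attains the estimate—is precisely what pins down the exact constant and upgrades the inequality to the equality (\ref{eq:6.5}).
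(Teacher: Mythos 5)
Your first two steps coincide with the paper's own proof: your linear forms $a_t$ are exactly the rows of the matrix $A_k$ in the paper's representation $f_k(x)=\eta_{p+\nu}(A_kx)-\langle e_1,x\rangle$, and your formula $\eta^{(p)}(\tau)=\bigl(\prod_{i=1}^{p-1}(p+\nu-i)\bigr)|\tau|^{\nu}\mathrm{sgn}(\tau)^{p}$ together with the scalar H\"older estimate is precisely the paper's computation of $D^{p}\eta_{p+\nu}$. (One flag: for odd $p$ the map $\tau\mapsto|\tau|^{\nu}\mathrm{sgn}(\tau)$ has H\"older constant $2^{1-\nu}$, not $1$; the paper glosses over this as well.) The genuine gap is your third step, which you yourself call ``the main obstacle'' and then do not carry out: bounding $\sum_t|\langle a_t,u\rangle|^{\nu}|\langle a_t,h\rangle|^{p}$ by a constant times $\|u\|^{\nu}\|h\|^{p}$. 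That is the only non-mechanical content of the lemma, and a promise of ``careful accounting of tridiagonal interactions'' is not a proof; as written, the proposal stops exactly where the work begins.

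Moreover, the route you sketch for that step is harder than necessary and aims at the wrong target. The paper closes the argument with no cross-term analysis at all, by decoupling the two factors through different norms: $\sum_t|\langle a_t,u\rangle|^{\nu}|\langle a_t,h\rangle|^{p}\le\|A_ku\|_{\infty}^{\nu}\sum_t|\langle a_t,h\rangle|^{p}\le\|A_ku\|_{\infty}^{\nu}\|A_kh\|^{p}$ (the last inequality because $p\ge 2$, so $\sum_i|v^{(i)}|^{p}\le\|v\|^{p}$), followed by the elementary bounds $\|A_ku\|_{\infty}\le\sqrt{2}\,\|u\|$ and $\|A_kh\|\le\|A_k\|\,\|h\|\le 2\|h\|$. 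This $\ell_\infty$/$\ell_2$ split is the idea you are missing; it makes the overlap of consecutive difference forms irrelevant. Finally, two remarks on your plan to ``upgrade to equality.'' First, equality is never needed: Theorem \ref{thm:6.1} uses only an upper bound on $H_{f_{2t+1},p}(\nu)$, and the paper's own proof in fact ends with an inequality, not an equality. Second, your extremal heuristic undermines itself: taking $x=0$ and $d,h$ proportional to $e_1-e_2$, the single term $t=1$ already contributes $2^{(p+\nu)/2}\prod_{i=1}^{p-1}(p+\nu-i)\,\|d\|^{\nu}\|h\|^{p}$ to the difference of $p$th derivatives (and the neighbouring form adds a further positive amount), which exceeds the constant in (\ref{eq:6.5}) whenever $p>2$. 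So termwise maximization cannot ``pin down'' that constant; the viable completion is to prove the upper bound via the norm split and drop the equality claim.
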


\begin{proof}
In view of (\ref{eq:6.4}), we have
\begin{equation}
f_{k}(x)=\eta_{p+\nu}(A_{k}x)-\la e_{1},x\ra,
\label{eq:6.6}
\end{equation}
where
\begin{equation}
\eta_{p+\nu}(u)=\dfrac{1}{p+\nu}\sum_{i=1}^{n}|u^{(i)}|^{p+\nu},
\label{eq:6.7}
\end{equation}
\begin{equation}
A_{k}=\left(\begin{array}{cc} U_{k} & 0\\ 0 & I_{n-k} \end{array}\right),\quad\text{with}\quad U_{k}=\left(\begin{array}{rrrrrr} 1      & -1     &  0     & \ldots & 0      & 0\\
                                  0      &  1     & -1     & \ldots & 0      & 0\\
                                  \vdots & \vdots & \vdots &        & \vdots & \vdots\\
                                  0      &  0     &  0     & \ldots & 1      & -1\\
                                  0      &  0     &  0     & \ldots & 0      &  1
             \end{array}\right)\in\mathbb{R}^{k\times k}.
\label{eq:6.8}
\end{equation}
It can be shown that (see page 13 in \cite{NES6})
\begin{equation}
\|A_{k}\|\leq 2.
\label{eq:6.9}
\end{equation}
On the other hand, for any $x,h\in\mathbb{R}^{n}$, we have
\begin{equation*}
D^{\ell}\eta_{p+\nu}(x)[h]^{\ell}=\left\{\begin{array}{ll}\left(\dfrac{\Pi_{i=0}^{\ell-1}(p+\nu-i)}{p+\nu}\right)\sum_{i=1}^{n}|x^{(i)}|^{p+\nu-\ell}(h^{(i)})^{\ell},&\text{if}\,\,\ell\,\,\text{is even},\\
\left(\dfrac{\Pi_{i=0}^{\ell-1}(p+\nu-i)}{p+\nu}\right)\sum_{i=1}^{n}|x^{(i)}|^{p+\nu-1-\ell}x^{(i)}(h^{(i)})^{\ell},&\text{if}\,\,\ell\,\,\text{is odd}.
                                         \end{array}
                                   \right.
\end{equation*}
Therefore, for all $x,y,h\in\mathbb{R}^{n}$, it follows that
\begin{eqnarray*}
\left|D^{p}\eta_{p+\nu}(x)[h]^{p}-D^{p}\eta_{p+\nu}(y)[h]^{p}\right| & \leq & \left(\Pi_{i=1}^{p-1}(p+\nu-i)\right)\sum_{i=1}^{n}|x^{(i)}-y^{(i)}|^{\nu}(h^{(i)})^{p}\\
&\leq & \left(\Pi_{i=1}^{p-1}(p+\nu-i)\right)\|x-y\|_{\infty}^{\nu}\sum_{i=1}^{n}(h^{(i)})^{p}\\
&\leq &\left(\Pi_{i=1}^{p-1}(p+\nu-i)\right)\|x-y\|_{\infty}^{\nu}\sum_{i=1}^{n}\left[(h^{(i)})^{2}\right]^{\frac{p}{2}}\\
&\leq & \left(\Pi_{i=1}^{p-1}(p+\nu-i)\right)\|x-y\|_{\infty}^{\nu}\|h\|^{p}.
\end{eqnarray*}
Consequently, for all $x,d,h\in\mathbb{R}^{n}$, we have
\begin{eqnarray}
\left|D^{p}f_{k}(x+d)[h]^{p}-D^{p}f_{k}(x)[h]^{p}\right|&=&\left|D^{p}\eta_{p+\nu}(A_{k}(x+d))[A_{k}h]^{p}-D^{p}\eta_{p+\nu}(A_{k}x)[A_{k}h]^{p}\right|\nonumber\\
 &\leq & \Pi_{i=1}^{p-1}(p+\nu-i)\|A_{k}d\|_{\infty}^{\nu}\|A_{k}h\|^{p}.
\label{eq:6.10}
\end{eqnarray}
Note that
\begin{eqnarray}
\|A_{k}d\|_{\infty}&  =  &\max_{1\leq i\leq
n}|(A_{k}d)^{(i)}| \leq \max_{1\leq n-1}
\left(|d^{(i)}|+|d^{(i+1)}|\right)\\
& \leq & \max_{1\leq i\leq
n-1}\sqrt{2[(d^{(i)})^{2}+(d^{(i+1)})^{2}]}
                   \leq  2^{\frac{1}{2}}\|d\|, \nonumber
\label{eq:6.11}
\end{eqnarray}
and, by (\ref{eq:6.9}), that
\begin{equation}
\|A_{k}h\|\leq \|A\|\|h\|\leq 2\|h\|.
\label{eq:6.12}
\end{equation}
Thus, combining (\ref{eq:6.10})-(\ref{eq:6.12}), we get
\begin{equation*}
\|D^{p}f_{k}(x+d)-D^{p}f_{k}(x)\|\leq 2^{\frac{2+\nu}{2}}\Pi_{i=1}^{p-1}(p+\nu-i)\|d\|^{\nu}.
\end{equation*}
\end{proof}

The next lemma provides additional properties of $f_{k}(\,.\,)$.

\begin{lemma}
\label{lem:6.2}
Given an integer $k\in [2,n]$, let function $f_{k}(\,.\,)$ be defined by (\ref{eq:6.4}). Then, $f_{k}(\,.\,)$ has a unique global minimizer $x_{k}^{*}$. Moreover,
\begin{equation}
f_{k}^{*}=-\dfrac{(p+\nu-1)k}{p+\nu}\quad\text{and}\quad \|x_{k}^{*}\|<\dfrac{(k+1)^{\frac{3}{2}}}{\sqrt{3}}.
\label{eq:6.13}
\end{equation}
\end{lemma}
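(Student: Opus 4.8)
The plan is to exploit strict convexity and coercivity for existence and uniqueness, and then to solve the stationarity system explicitly. By the representation (\ref{eq:6.6}), $f_{k}(x)=\eta_{p+\nu}(A_{k}x)-\la e_{1},x\ra$, where $A_{k}$ is upper triangular with unit diagonal and hence invertible. Since $p+\nu>1$, each summand $|t|^{p+\nu}$ in (\ref{eq:6.7}) is strictly convex, so $\eta_{p+\nu}$ is strictly convex; composing with the invertible map $A_{k}$ and subtracting the linear term $\la e_{1},x\ra$ preserves strict convexity. As $\|A_{k}x\|\to\infty$ whenever $\|x\|\to\infty$, the function $f_{k}$ is also coercive, so it attains a global minimizer, and strict convexity makes it unique. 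I denote it $x_{k}^{*}$; it is the unique solution of $\nabla f_{k}(x_{k}^{*})=0$.

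Next I would solve this stationarity system. Writing $\phi(t)=\frac{1}{p+\nu}|t|^{p+\nu}$, so that $\phi'(t)=|t|^{p+\nu-1}\,\mathrm{sign}(t)$ is continuous, strictly increasing, and satisfies $\phi'(1)=1$, the partial derivatives of (\ref{eq:6.4}) yield $\phi'(x^{(1)}-x^{(2)})=1$, the relations $\phi'(x^{(j-1)}-x^{(j)})=\phi'(x^{(j)}-x^{(j+1)})$ for $2\le j\le k-1$, the relation $\phi'(x^{(k-1)}-x^{(k)})=\phi'(x^{(k)})$ at $j=k$, and $\phi'(x^{(j)})=0$ for $j>k$. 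Introducing the differences $d_{i}=x^{(i)}-x^{(i+1)}$ for $1\le i\le k-1$ together with $d_{k}=x^{(k)}$ (consistent with $x^{(k+1)}=0$), these conditions read $\phi'(d_{1})=1$ and $\phi'(d_{j-1})=\phi'(d_{j})$ for all $j$. By strict monotonicity of $\phi'$ all the differences coincide, and $\phi'(d)=1$ forces $d=1$; hence $d_{i}=1$ for every $i$. Unwinding the differences gives the explicit minimizer $x_{k}^{*(j)}=k-j+1$ for $1\le j\le k$ and $x_{k}^{*(j)}=0$ for $k<j\le n$.

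It then remains to substitute and estimate. At $x_{k}^{*}$ every difference equals $1$ and $x^{(k)}=1$, so the bracket in (\ref{eq:6.4}) contributes $(k-1)+1=k$ terms equal to $1$, while $x^{(1)}=k$; therefore $f_{k}^{*}=\frac{k}{p+\nu}-k=-\frac{(p+\nu-1)k}{p+\nu}$. For the norm I compute $\|x_{k}^{*}\|^{2}=\sum_{j=1}^{k}(k-j+1)^{2}=\sum_{m=1}^{k}m^{2}=\frac{k(k+1)(2k+1)}{6}$. Squaring the target bound reduces the claim to $\frac{k(k+1)(2k+1)}{6}<\frac{(k+1)^{3}}{3}$, equivalently $2k^{2}+k<2(k+1)^{2}=2k^{2}+4k+2$, i.e. $0<3k+2$, which holds for all $k\ge 1$. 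The main obstacle is decoupling the stationarity system: the telescoping substitution $d_{i}=x^{(i)}-x^{(i+1)}$ combined with the strict monotonicity of $\phi'$ is exactly what forces all differences to equal $1$, after which both the optimal value and the norm bound follow by direct computation.
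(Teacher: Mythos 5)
Your proof is correct and follows essentially the same route as the paper: your difference variables $d_{i}=x^{(i)}-x^{(i+1)}$, $d_{k}=x^{(k)}$ are exactly the coordinates of $y=A_{k}x$ in the paper's representation $f_{k}(x)=\eta_{p+\nu}(A_{k}x)-\langle e_{1},x\rangle$, and your telescoping/monotonicity argument reproduces the paper's solution $A_{k}x_{k}^{*}=\hat{e}_{k}$, followed by the identical computations of $f_{k}^{*}$ and of $\|x_{k}^{*}\|^{2}=\frac{k(k+1)(2k+1)}{6}<\frac{(k+1)^{3}}{3}$. The only cosmetic difference is that you obtain existence and uniqueness from strict convexity plus coercivity, whereas the paper invokes uniform convexity of $f_{k}$; both are valid.
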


\begin{proof}
The existence and uniqueness of $x_{k}^{*}$ follows from the fact that $f_{k}(\,.\,)$ is uniformly convex. In view of (\ref{eq:6.6}), it follows from the first-order optimality condition that
\begin{equation*}
A_{k}^{T}\nabla\eta_{p+\nu}(A_{k}x_{k}^{*})=e_{1}.
\end{equation*}
Therefore, $A_{k}x_{k}^{*}=y_{k}^{*}$, where $y_{k}^{*}$ satisfies
\begin{equation}
\nabla\eta_{p+\nu}(y_{k}^{*})=A_{k}^{T}e_{1}=\hat{e}_{k}=\left[\begin{array}{c}\bar{e}_{k}\\ 0_{n-k}\end{array}\right]
\label{eq:6.14}
\end{equation}
with $\bar{e}_{k}\in\mathbb{R}^{k}$ being the vector of all ones and $0_{n-k}$ being the origin in $\mathbb{R}^{n-k}$. Note that
\begin{equation}
\dfrac{\partial\eta_{p+\nu}}{\partial y_{i}}(y)=|y^{(i)}|^{p+\nu-2}y^{(i)},\quad i=1,\ldots,n.
\label{eq:6.14extra}
\end{equation}
Consequently, (\ref{eq:6.14}) is equivalent to
\begin{equation*}
|(y_{k}^{*})^{(i)}|^{p+\nu-2}(y_{k}^{*})^{(i)}=\left\{\begin{array}{ll} 1,&\text{for}\,\,i=1,\ldots,k,\\
                                                                        0,&\text{for}\,\,i=k+1,\ldots,n.
                                               \end{array}
                                               \right.
\end{equation*}
Thus,
\begin{equation}
A_{k}x_{k}^{*}=y_{k}^{*}=\hat{e}_{k},
\label{eq:6.15}
\end{equation}
and so
\begin{equation}
(x_{k})^{(i)}=(A_{k}^{-1}y_{k}^{*})^{(i)}=(A_{k}^{-1}\hat{e}_{k})^{(i)}=(k-i+1)_{+},\,\,i=1,\ldots,n,
\label{eq:6.16}
\end{equation}
where $(\tau)_{+}=\max\left\{0,\tau\right\}$. Finally,
combining (\ref{eq:6.6}), (\ref{eq:6.7}), (\ref{eq:6.15})
and (\ref{eq:6.16}) we get
\begin{eqnarray*}
f_{k}^{*}&=&\eta_{p+\nu}(A_{k}x_{k}^{*})-\la e_{1},x_{k}^{*}\ra = \eta_{p+\nu}(\hat{e}_{k})-(x_{k}^{*})^{(1)}\\
&=&\dfrac{1}{p+\nu}\sum_{i=1}^{n}|(\hat{e}_{k})^{(i)}|^{p+\nu}-k=\dfrac{k}{p+\nu}-k
=-\dfrac{(p+\nu-1)k}{p+\nu},
\end{eqnarray*}
\begin{eqnarray*}
\|x_{k}^{*}\|^{2}&=&\sum_{i=1}^{n}\left[(x_{k}^{*})^{(i)}\right]^{2}=k^{2}+(k-1)^{2}+\ldots +2^{2}+1^{2}\\
                 &=&\sum_{i=1}^{k}i^{2}=\dfrac{k(k+1)(2k+1)}{6}<\dfrac{(k+1)^{3}}{3}.
\end{eqnarray*}
\end{proof}

Our goal is to understand the behavior of the tensor methods specified by Assumption 1 when applied to the minimization of $f_{k}(\,.\,)$ with a suitable $k$. For that, let us consider the following subspaces:
\begin{equation*}
\mathbb{R}^{n}_{k}=\left\{x\in\mathbb{R}^{n}\,|\,x^{(i)}=0,\,\,i=k+1,\ldots,n\right\},\,\,1\leq k\leq n-1.
\end{equation*}

\begin{lemma}
\label{lem:6.3}
For any $q\geq 0$ and $x\in\mathbb{R}_{k}^{n}$, $f_{k+q}(x)=f_{k}(x)$.
\end{lemma}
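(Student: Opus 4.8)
The plan is to substitute the defining condition of $\mathbb{R}_k^n$, namely $x^{(i)}=0$ for $i=k+1,\ldots,n$, directly into the two expressions $f_{k+q}(x)$ and $f_k(x)$ coming from (\ref{eq:6.4}), and then verify term-by-term that they coincide. The case $q=0$ is immediate, so I would assume $q\geq 1$ throughout.

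First I would write $f_{k+q}(x)$ explicitly by replacing $k$ with $k+q$ in (\ref{eq:6.4}):
$$f_{k+q}(x)=\dfrac{1}{p+\nu}\left[\sum_{i=1}^{k+q-1}|x^{(i)}-x^{(i+1)}|^{p+\nu}+\sum_{i=k+q}^{n}|x^{(i)}|^{p+\nu}\right]-x^{(1)}.$$
Since $x\in\mathbb{R}_k^n$ forces $x^{(i)}=0$ for every $i\geq k+1$, the whole trailing sum vanishes (its smallest index $k+q$ is already $\geq k+1$), and within the first sum every term indexed by $i\geq k+1$ reduces to $|0-0|^{p+\nu}=0$. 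The only term beyond index $k-1$ that survives is the crossover term at $i=k$, which becomes $|x^{(k)}-x^{(k+1)}|^{p+\nu}=|x^{(k)}|^{p+\nu}$ upon using $x^{(k+1)}=0$.

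Next I would perform the analogous reduction on $f_k(x)$: its tail sum $\sum_{i=k}^{n}|x^{(i)}|^{p+\nu}$ collapses to $|x^{(k)}|^{p+\nu}$ for the same reason. After these simplifications both functions are seen to equal
$$\dfrac{1}{p+\nu}\left[\sum_{i=1}^{k-1}|x^{(i)}-x^{(i+1)}|^{p+\nu}+|x^{(k)}|^{p+\nu}\right]-x^{(1)},$$
which establishes the identity $f_{k+q}(x)=f_k(x)$.

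The calculation is entirely routine, so there is no genuine obstacle; the one point that demands care is the bookkeeping at index $k$, where the boundary difference term $|x^{(k)}-x^{(k+1)}|^{p+\nu}$ belonging to the \emph{difference} block of $f_{k+q}$ must be matched against the leading term $|x^{(k)}|^{p+\nu}$ of the \emph{tail} block of $f_k$. I would isolate this single term before discarding the remaining vanishing contributions, since that is precisely where the two different groupings in the definition (\ref{eq:6.4}) are reconciled.
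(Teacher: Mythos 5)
Your proof is correct and is exactly the argument the paper intends: the paper's own proof is the one-line remark that the claim ``follows directly from (\ref{eq:6.4})'', and your term-by-term substitution (including the careful matching of the crossover term $|x^{(k)}-x^{(k+1)}|^{p+\nu}$ in $f_{k+q}$ against the leading tail term $|x^{(k)}|^{p+\nu}$ of $f_{k}$) is precisely the routine verification being elided there.
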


\begin{proof}
It follows directly from (\ref{eq:6.4}).
\end{proof}

\begin{lemma}
\label{lem:6.4}
Let $\mathcal{M}$ be a $p$-order tensor method satisfying Assumption 1. If $\mathcal{M}$ is applied to the minimization of $f_{t}(\,.\,)$ starting from $x_{0}=0$, then the sequence $\left\{x_{k}\right\}_{k\geq 0}$ of test points generated by $\mathcal{M}$ satisfies
\begin{equation*}
x_{k+1}\in\sum_{i=0}^{k}S_{f_{t}}(x_{i})\subset\mathbb{R}^{n}_{k+1},\,\,0\leq k \leq t-1.
\end{equation*}
\end{lemma}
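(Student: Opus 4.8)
The plan is to argue by induction on $k$, reducing the nested-subspace assertion to a single structural fact about one iteration. The first inclusion $x_{k+1}\in\sum_{i=0}^{k}S_{f_{t}}(x_{i})$ is immediate from Assumption 1 (equation (\ref{eq:6.3})) together with $x_{0}=0$, so the whole content is the second inclusion $\sum_{i=0}^{k}S_{f_{t}}(x_{i})\subseteq\mathbb{R}^{n}_{k+1}$. Since the subspaces are nested, $\mathbb{R}^{n}_{0}\subseteq\mathbb{R}^{n}_{1}\subseteq\cdots$, it suffices to prove the one-step statement: \emph{if $0\le k\le t-1$ and $\bar{x}\in\mathbb{R}^{n}_{k}$, then $S_{f_{t}}(\bar{x})\subseteq\mathbb{R}^{n}_{k+1}$}. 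Granting this, the base case $x_{0}=0\in\mathbb{R}^{n}_{0}$ is trivial, and if $x_{0},\dots,x_{k}\in\mathbb{R}^{n}_{k}$ (using nesting) the one-step statement gives $S_{f_{t}}(x_{i})\subseteq\mathbb{R}^{n}_{k+1}$ for each $i\le k$, whence $x_{k+1}\in\sum_{i=0}^{k}S_{f_{t}}(x_{i})\subseteq\mathbb{R}^{n}_{k+1}$, advancing the induction as long as $k\le t-1$.

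To prove the one-step statement I would analyze the stationarity condition for $\phi_{a,\gamma,m}$. A point $\bar{h}$ is stationary if and only if
\begin{equation*}
\sum_{i=1}^{p} i\,a^{(i)}\,\nabla_{h}\!\left(D^{i}f_{t}(\bar{x})[h]^{i}\right)\big|_{h=\bar{h}}+\gamma m\,\|\bar{h}\|^{m-2}B\bar{h}=0.
\end{equation*}
The crux is to show that the first sum lies in $\mathbb{R}^{n}_{k+1}$ for \emph{every} $\bar{h}$. Using $f_{t}(x)=\eta_{p+\nu}(A_{t}x)-\langle e_{1},x\rangle$ from (\ref{eq:6.6}) and the chain rule, each term equals $A_{t}^{T}$ applied to a covector whose $j$th component is a fixed multiple of the separable coefficient of $D^{\ell}\eta_{p+\nu}$ (read off from the explicit formula in the proof of Lemma \ref{lem:6.1}) evaluated at $(A_{t}\bar{x})^{(j)}$, times a power of $(A_{t}\bar{h})^{(j)}$. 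I would then combine two facts. First, for $\bar{x}\in\mathbb{R}^{n}_{k}$ with $k\le t-1$, the bidiagonal form of $A_{t}$ in (\ref{eq:6.8}) yields $(A_{t}\bar{x})^{(j)}=0$ for every $j\ge k+1$, so the corresponding derivative coefficient vanishes there and the covector is supported on $\{1,\dots,k\}$, independently of $\bar{h}$. Second, $A_{t}^{T}e_{j}=e_{j}-e_{j+1}$ for $j\le t-1$, so $A_{t}^{T}$ enlarges the support of a covector by at most one index. Therefore the image lies in $\mathbb{R}^{n}_{k+1}$; the linear term $i=1$ is handled identically, via $\nabla\eta_{p+\nu}(A_{t}\bar{x})\in\mathbb{R}^{n}_{k}$ and $-e_{1}\in\mathbb{R}^{n}_{1}$.

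With the derivative part confined to $\mathbb{R}^{n}_{k+1}$, the stationarity condition forces $\gamma m\,\|\bar{h}\|^{m-2}B\bar{h}\in\mathbb{R}^{n}_{k+1}$; since $\gamma>0$, $m>1$ and $B=I_{n}$, either $\bar{h}=0$ or $\bar{h}$ is a positive scalar multiple of a vector of $\mathbb{R}^{n}_{k+1}$, so in every case $\bar{h}\in\mathbb{R}^{n}_{k+1}$. Taking the linear span over all admissible $(a,\gamma,m)$ gives $S_{f_{t}}(\bar{x})\subseteq\mathbb{R}^{n}_{k+1}$, which closes the induction.

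I expect the main obstacle to be the coefficient bookkeeping behind facts (i) and (ii): verifying that each one-dimensional derivative coefficient of $\eta_{p+\nu}$ genuinely vanishes at a zero argument, and that the single off-diagonal of $A_{t}^{T}$ spreads support by exactly one coordinate. The delicate point is the top order $i=p$, which is precisely where the H\"older exponent enters: for $\nu>0$ the power $p+\nu-p=\nu$ is positive and the coefficient vanishes at $0$, while in the odd-order instances the convention $\mathrm{sign}(0)=0$ is what makes the relevant coefficient vanish; this must be checked carefully against the explicit formula for $D^{\ell}\eta_{p+\nu}$.
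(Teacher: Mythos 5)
Your argument is correct for $\nu>0$, but it follows a genuinely different route from the paper, because the paper does not actually prove this lemma: its entire proof is a citation of Lemma 2 in \cite{NES6}, which establishes the analogous subspace statement for the Lipschitz-case hard functions built from $\eta_{p+1}$ (i.e., $\nu=1$). What you supply is the self-contained adaptation to the H\"older family built from $\eta_{p+\nu}$: reduce by induction to the one-step claim that $\bar{x}\in\mathbb{R}^{n}_{k}$ implies $S_{f_{t}}(\bar{x})\subseteq\mathbb{R}^{n}_{k+1}$, then prove it from the stationarity condition of $\phi_{a,\gamma,m}$ using the two structural facts that $(A_{t}\bar{x})^{(j)}=0$ for all $j\geq k+1$ (so every derivative covector of $\eta_{p+\nu}$ at $A_{t}\bar{x}$ is supported on $\{1,\dots,k\}$, regardless of $\bar{h}$) and that $A_{t}^{T}e_{j}=e_{j}-e_{j+1}$ spreads support by one index, after which the regularization term $\gamma m\|\bar{h}\|^{m-2}\bar{h}$ forces $\bar{h}\in\mathbb{R}^{n}_{k+1}$. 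This is precisely the mechanism behind the cited lemma, and your version is arguably what the paper actually needs, since the cited result concerns powers $p+1$ rather than $p+\nu$; both facts check out, and the extra factor $i$ in your stationarity formula is a harmless rescaling that does not affect the support argument. One caveat, which you flag but do not fully resolve: for $\nu=0$ and $p$ even, the top-order coefficient is $|y|^{p+\nu-p}=|y|^{0}\equiv 1$, which does \emph{not} vanish at $y=0$; then $D^{p}f_{t}(\bar{x})[h]^{p}$ is independent of $\bar{x}$, its gradient is supported wherever $A_{t}\bar{h}$ is, and the one-step inclusion genuinely fails (with $\bar{x}=0$, $a^{(i)}=0$ for $i<p$ and $a^{(p)}<0$, a suitable nonzero multiple of $e_{n}$ is a stationary point). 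So your proof covers $\nu\in(0,1]$; the degenerate case $\nu=0$, which the paper's statement formally includes, is equally unhandled by the paper's citation to the $\nu=1$ result, so this is a gap you inherit from the paper rather than introduce.
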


\begin{proof}
See Lemma 2 in \cite{NES6}.
\end{proof}

Now, we can prove the lower complexity bound for $p$-order tensor methods applied to the minimization of functions with $\nu$-H\"{o}lder continuous $p$th derivatives.

\begin{theorem}
\label{thm:6.1}
Let $\mathcal{M}$ be a $p$-order tensor method satisfying Assumption 1. Assume that for any function $f$ with $H_{f,p}(\nu)<+\infty$ this method ensures the rate of convergence:
\begin{equation}
\min_{0\leq k\leq t}\,f(x_{k})-f^{*}\leq\dfrac{H_{f,p}(\nu)\|x_{0}-x^{*}\|^{p+\nu}}{\kappa(t)},\,\,t\geq 1,
\label{eq:6.17}
\end{equation}
where $\left\{x_{k}\right\}_{k\geq 0}$ is the sequence generated by method $\mathcal{M}$ and $x^{*}$ is a global minimizer of $f$. Then, for all $t\geq 1$ such that $2t+1\leq n$ we have
\begin{equation}
\kappa(t)\leq C_{p,\nu}(t+1)^{\frac{3(p+\nu)-2}{2}},
\label{eq:6.18}
\end{equation}
where
\begin{equation}
C_{p,\nu}=\dfrac{2^{\frac{3p+4\nu+2}{2}}\Pi_{i=0}^{p-1}(p+\nu-i)}{3^{\frac{p+\nu}{2}}(p+\nu-1)}.
\label{eq:6.19}
\end{equation}
\end{theorem}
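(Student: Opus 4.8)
The plan is to instantiate the assumed convergence guarantee (\ref{eq:6.17}) on a carefully chosen member of the hard family (\ref{eq:6.4}) and then solve for $\kappa(t)$. Concretely, I would run the method $\mathcal{M}$ on $f_{2t+1}(\,\cdot\,)$ starting from $x_{0}=0$; this is legitimate precisely because the standing hypothesis $2t+1\leq n$ guarantees that $f_{2t+1}$ is well defined on $\mathbb{R}^{n}$. The engine of the argument is the subspace-confinement property of Lemma \ref{lem:6.4}: applied to $f_{2t+1}$, it forces $x_{k}\in\mathbb{R}^{n}_{k}\subseteq\mathbb{R}^{n}_{t}$ for every $0\leq k\leq t$, so after $t$ iterations the method has only ``activated'' the first $t$ coordinates and cannot feel the remaining $t+1$ of them.

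Next I would invoke Lemma \ref{lem:6.3}, which says $f_{2t+1}$ agrees with $f_{t}$ on $\mathbb{R}^{n}_{t}$. Hence $f_{2t+1}(x_{k})=f_{t}(x_{k})\geq f_{t}^{*}$ for each visited point, giving the residual lower bound
\[ \min_{0\leq k\leq t}f_{2t+1}(x_{k})-f_{2t+1}^{*}\geq f_{t}^{*}-f_{2t+1}^{*}. \]
The optimal-value gap on the right is read off directly from Lemma \ref{lem:6.2}: since $f_{m}^{*}=-\tfrac{(p+\nu-1)m}{p+\nu}$, taking $m=t$ and $m=2t+1$ yields $f_{t}^{*}-f_{2t+1}^{*}=\tfrac{(p+\nu-1)(t+1)}{p+\nu}$. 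This is the decisive quantitative step: it certifies that the residual stays bounded below by a quantity growing \emph{linearly} in $t$, no matter how clever $\mathcal{M}$ is.

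Finally I would feed this into the hypothesised rate (\ref{eq:6.17}) with $f=f_{2t+1}$ and $x_{0}=0$, so that $\|x_{0}-x_{2t+1}^{*}\|=\|x_{2t+1}^{*}\|$. Rearranging gives
\[ \kappa(t)\leq\frac{(p+\nu)\,H_{f_{2t+1},p}(\nu)\,\|x_{2t+1}^{*}\|^{p+\nu}}{(p+\nu-1)(t+1)}. \]
Substituting the Hölder constant $H_{f_{2t+1},p}(\nu)=2^{(2+\nu)/2}\,\Pi_{i=1}^{p-1}(p+\nu-i)$ from Lemma \ref{lem:6.1} and the radius bound $\|x_{2t+1}^{*}\|<(2t+2)^{3/2}/\sqrt{3}$ from Lemma \ref{lem:6.2}, then collecting the powers of $2$, $3$ and $(t+1)$ and using the elementary identity $(p+\nu)\Pi_{i=1}^{p-1}(p+\nu-i)=\Pi_{i=0}^{p-1}(p+\nu-i)$, collapses the right-hand side into exactly (\ref{eq:6.18})--(\ref{eq:6.19}). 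The exponent $\tfrac{3(p+\nu)-2}{2}$ emerges as $\tfrac{3(p+\nu)}{2}-1$, where the $\tfrac{3}{2}$ reflects the cubic growth $\|x_{2t+1}^{*}\|^{2}=\Theta(t^{3})$ and the $-1$ comes from the linear gap in the denominator.

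I expect the only genuine subtlety to be the interplay between confinement and the choice of index $2t+1$, rather than any single calculation. The index must be large enough that the true minimizer $x_{2t+1}^{*}$ sits well outside the explored subspace $\mathbb{R}^{n}_{t}$ (this is what makes the gap $f_{t}^{*}-f_{2t+1}^{*}$ nonvanishing and linear in $t$), while Lemma \ref{lem:6.3} must simultaneously permit evaluating $f_{2t+1}$ as $f_{t}$ at the visited points; the factor $2$ in $2t+1$ is what produces the clean constant $C_{p,\nu}$. Once Lemmas \ref{lem:6.1}--\ref{lem:6.4} are granted, everything after the gap computation is routine bookkeeping of constants.
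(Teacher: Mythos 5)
Your proposal is correct and follows essentially the same route as the paper's own proof: run $\mathcal{M}$ on $f_{2t+1}$ from $x_{0}=0$, use Lemma \ref{lem:6.4} to confine the iterates to $\mathbb{R}^{n}_{t}$, use Lemma \ref{lem:6.3} to replace $f_{2t+1}$ by $f_{t}$ there, lower-bound the residual by the gap $f_{t}^{*}-f_{2t+1}^{*}=\tfrac{(p+\nu-1)(t+1)}{p+\nu}$ from Lemma \ref{lem:6.2}, and then substitute the H\"older constant and radius bound to obtain $C_{p,\nu}$. The constant bookkeeping you describe matches the paper's computation exactly.
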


\begin{proof}
Let us apply method $\mathcal{M}$ for minimizing function $f_{2t+1}(\,.\,)$ starting from point $x_{0}=0$. By Lemma \ref{lem:6.4} we have $x_{i}\in\mathbb{R}_{t}^{n}$ for all $i$, $0\leq i\leq t$. Moreover, by Lemma \ref{lem:6.3} we have
\begin{equation}
f_{2t+1}(x)=f_{t}(x),\quad\forall x\in\mathbb{R}^{n}_{t}.
\label{eq:6.20}
\end{equation}
Thus, from (\ref{eq:6.17}), (\ref{eq:6.20}), Lemma \ref{lem:6.1} and Lemma \ref{lem:6.2} we get
\begin{eqnarray*}
\kappa(t)&\leq &\dfrac{H_{f_{2t+1},p}(\nu)\|x_{0}-x_{2t+1}^{*}\|^{p+\nu}}{\min_{0\leq k\leq t}\,f_{2t+1}(x_{k})-f_{2t+1}^{*}}=\dfrac{2^{\frac{2+\nu}{2}}\Pi_{i=1}^{p-1}(p+\nu-i)\|x_{2t+1}^{*}\|^{p+\nu}}{\min_{0\leq k\leq t}\,f_{t}(x_{k})-f_{2t+1}^{*}}\\
&\leq & \dfrac{2^{\frac{2+\nu}{2}}\Pi_{i=1}^{p-1}(p+\nu-i)\|x_{2t+1}^{*}\|^{p+\nu}}{f_{t}^{*}-f_{2t+1}^{*}}<\dfrac{2^{\frac{2+\nu}{2}}\Pi_{i=1}^{p-1}(p+\nu-i)\left[2(t+1)\right]^{\frac{3}{2}(p+\nu)}}{3^{\frac{p+\nu}{2}}(f_{t}^{*}-f_{2t+1}^{*})}\\
&=& \dfrac{2^{\frac{3p+4\nu+2}{2}}\Pi_{i=1}^{p-1}(p+\nu-i)(t+1)^{\frac{3(p+\nu)}{2}}}{3^{\frac{p+\nu}{2}}\left[\dfrac{-(p+\nu-1)t+(p+\nu-1)(2t+1)}{p+\nu}\right]}\\
&=&
\dfrac{2^{\frac{3p+4\nu+2}{2}}\Pi_{i=0}^{p-1}(p+\nu-i)(t+1)^{\frac{3(p+\nu)}{2}}}
{3^{\frac{p+\nu}{2}}(p+\nu-1)(t+1)} =
C_{p,\nu}(t+1)^{\frac{3(p+\nu)-2}{2}},
\end{eqnarray*}
where constant $C_{p,\nu}$ is given by (\ref{eq:6.19}).
\end{proof}

\subsection{Discussion}

Theorem \ref{thm:6.1} establishes that the lower bound for
the rate of convergence of tensor methods applied to
functions with $\nu$-H\"{o}lder continuous $p$th
derivatives is of
$\mathcal{O}\left((\frac{1}{k})^{\frac{3(p+\nu)-2}{2}}\right)$.
In the Lipschitz case (i.e., $\nu=1$) we have
$\mathcal{O}\left((\frac{1}{k})^{\frac{3p+1}{2}}\right)$,
which coincide with the bounds in \cite{AR,NES6}. On the
other hand, for first-order methods (i.e., $p=1$) we have
$\mathcal{O}\left((\frac{1}{k})^{\frac{1+3\nu}{2}}\right)$,
which is the bound in \cite{NEM}.

The rate of
$\mathcal{O}\left((\frac{1}{k})^{\frac{3(p+\nu)-2}{2}}\right)$
corresponds to a worst-case complexity bound of
\\$\mathcal{O}\left(\epsilon^{-2/[3(p+\nu)-2]}\right)$
iterations necessary to ensure
$
f(x_{k})-f(x^{*})\leq\epsilon.
$
This means that the nonuniversal accelerated schemes proposed in this paper (e.g., Algorithm 4 with $\alpha=\nu$) are nearly optimal tensor methods. In fact, for $\epsilon\in (0,1)$, note that
\begin{eqnarray*}
\left(\frac{1}{\epsilon}\right)^{\frac{1}{p+\nu}}&=&\left(\frac{1}{\epsilon}\right)^{\frac{p+\nu-2}{(p+\nu)[3(p+\nu)-2]}}\left(\frac{1}{\epsilon}\right)^{\frac{2}{3(p+\nu)-2}}\leq \left(\frac{1}{\epsilon}\right)^{\frac{p-1}{3p^{2}-2p}}\left(\frac{1}{\epsilon}\right)^{\frac{2}{3(p+\nu)-2}}\\
&\leq & \left(\frac{1}{\epsilon}\right)^{\frac{1}{8}}\left(\frac{1}{\epsilon}\right)^{\frac{2}{3(p+\nu)-2}}\\
\end{eqnarray*}
In particular, if $\epsilon=10^{-6}$,  we have
$
\left(\frac{1}{\epsilon}\right)^{\frac{1}{p+\nu}}\leq 6\left(\frac{1}{\epsilon}\right)^{\frac{2}{3(p+\nu)-2}}.
$ Thus, in practice, the complexity bounds of our
accelerated nonuniversal methods differ from the lower
bound just by a small constant factor.

Notice that the lower-bound described in Theorem 5.5 is only valid while the iteration counter $t$ satisfies $t<\frac{1}{2}(n-1)$, where $n$ is the dimension of the domain of the objective. The same condition on $t$ appears in other lower bounds in the literature for the case $p=1$ and $\nu=1$ (see, e.g., Theorem 2.1.7 in \cite{NES18}).

\section{Conclusion}

In this paper, we presented $p$-order methods for
unconstrained minimization of convex functions that are
$p$-times differentiable with $\nu$-H\"{o}lder continuous
$p$th derivatives. For the universal and the nonuniversal
schemes without acceleration, we established iteration
complexity bounds of
$\mathcal{O}\left(\epsilon^{-1/(p+\nu-1)}\right)$ for
reducing the functional residual below a given
$\epsilon\in (0,1)$. Assuming that $\nu$ is known, we
obtained an improved complexity bound of
$\mathcal{O}\left(\epsilon^{-1/(p+\nu)}\right)$ for the corresponding
accelerated scheme. For the case in which $\nu$ is
unknown, we presented an accelerated universal tensor scheme
with an iteration complexity of
$\mathcal{O}\left(\epsilon^{-p/[(p+1)(p+\nu-1)]}\right)$. 

Finally, a lower complexity bound of
$\mathcal{O}(\epsilon^{-2/[3(p+\nu)-2]})$ was also obtained for
the referred problem class. This means that, in practice, our
accelerated nonuniversal schemes are nearly optimal.
Remarkably, the complexity bound obtained for the
accelerated universal schemes is slightly worse than the
bound obtained for the nonuniversal accelerated schemes.
Up to now, it is not clear whether the estimating
sequences technique can be modified to provide an
accelerated universal $p$-order method with a complexity
bound of $\mathcal{O}\left(\epsilon^{-1/(p+\nu)}\right)$.

It is worth mentioning that the study of high-order methods is still at its early stages, with the majority of recent works in this area focusing on the derivation of global complexity bounds (see, e.g., \cite{BAES,Birgin,CGT2,CHEN,Mart,NES6}). These bounds predict that high-order methods with $p\geq 3$ may require significantly fewer calls of the oracle than second-order methods. As pointed out in \cite{CHEN,NES6}, the computation of high-order derivatives may be affordable for structured objectives (such as separable functions). Moreover, at least for $p=3$ and $\alpha=\nu$, the auxiliary problems can be solved using Bregman gradient methods that also take into account their particular structure \cite{GN3,NES6}. Nevertheless, the practical impact of high-order methods is yet to be seen.

\appendix

\section{Auxiliary Results}

In all algorithms described in this paper, the acceptance of new points is conditioned to the achievement of a sufficient decrease of the objective function value. In the nonaccelerated schemes, the sufficient decrease condition is specified by (3.4), while for accelerated schemes, it is specified by (4.6). In this Appendix we present auxiliary results from which we conclude that these conditions are satisfied when the regularization parameter is sufficiently large.

\subsection{Results for schemes without acceleration}
Our first lemma gives a lower bound for the functional decrease in terms of a suitable power of the norm of the displacement, when $\nu$ is known.


\begin{lemma}
\label{lem:extraA.1}
Let $H_{f,p}(\nu)<+\infty$ for some $\nu\in [0,1]$ and assume that $x^{+}$ satisfies
\begin{equation}
\Omega_{\bar{x},p,H}^{(\nu)}(x^{+})\leq f(\bar{x}),
\label{eq:extraA.1}
\end{equation}
for some $\bar{x}\in\E$ and $H>0$. If $H\geq\frac{3}{2}H_{f,p}(\nu)$, then
\begin{equation}
f(\bar{x})-f(x^{+})\geq\dfrac{H}{(p+1)!}\|x^{+}-\bar{x}\|^{p+\nu}.
\label{eq:extraA.2}
\end{equation}
\end{lemma}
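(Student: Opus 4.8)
The plan is to combine the H\"older smoothness estimate (\ref{eq:2.4}) with the acceptance condition (\ref{eq:extraA.1}); remarkably, no further property of the point $x^{+}$ (in particular, no gradient information) is needed. First I would apply (\ref{eq:2.4}) at the pair $(\bar{x},x^{+})$ to control how far $f$ departs from its $p$th order Taylor polynomial:
\begin{equation*}
f(x^{+})\leq\Phi_{\bar{x},p}(x^{+})+\dfrac{H_{f,p}(\nu)}{p!}\|x^{+}-\bar{x}\|^{p+\nu}.
\end{equation*}
Next, I would unfold the hypothesis (\ref{eq:extraA.1}) using the definition (\ref{eq:2.7}) of the model $\Omega^{(\nu)}_{\bar{x},p,H}$, which isolates the same Taylor polynomial:
\begin{equation*}
\Phi_{\bar{x},p}(x^{+})\leq f(\bar{x})-\dfrac{H}{p!}\|x^{+}-\bar{x}\|^{p+\nu}.
\end{equation*}

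Adding these two inequalities cancels the term $\Phi_{\bar{x},p}(x^{+})$ and leaves
\begin{equation*}
f(\bar{x})-f(x^{+})\geq\dfrac{H-H_{f,p}(\nu)}{p!}\|x^{+}-\bar{x}\|^{p+\nu}.
\end{equation*}
The mechanism is now transparent: the regularization coefficient $H/p!$ must overcome the Taylor approximation error $H_{f,p}(\nu)/p!$, so the guaranteed decrease is governed by the surplus $H-H_{f,p}(\nu)$. The concluding step is to feed in the standing assumption $H\geq\frac{3}{2}H_{f,p}(\nu)$, equivalently $H_{f,p}(\nu)\leq\frac{2}{3}H$, which gives $H-H_{f,p}(\nu)\geq\frac{1}{3}H$, and hence
\begin{equation*}
f(\bar{x})-f(x^{+})\geq\dfrac{H}{3\,p!}\|x^{+}-\bar{x}\|^{p+\nu}.
\end{equation*}
The target (\ref{eq:extraA.2}) then follows from the factorial comparison $\frac{1}{3\,p!}\geq\frac{1}{(p+1)!}$, i.e. $p+1\geq 3$.

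I do not anticipate any genuine difficulty here beyond careful bookkeeping of constants: the argument is three elementary inequalities glued together. The only point deserving explicit attention is the very last comparison $p+1\geq 3$, which is exactly where the numbers $\frac{3}{2}$ in the hypothesis and $(p+1)!$ in the conclusion are calibrated against one another; the bound is tight (equality) precisely at $p=2$, so I would state this factorial step explicitly rather than absorb it silently, and note that it is what restricts the clean form of the estimate to the regime $p\geq 2$.
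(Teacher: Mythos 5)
Your proof is correct and follows essentially the same route as the paper: both derive the surplus inequality $f(\bar{x})-f(x^{+})\geq\frac{H-H_{f,p}(\nu)}{p!}\|x^{+}-\bar{x}\|^{p+\nu}$ by combining (\ref{eq:2.4}) (equivalently (\ref{eq:2.8})) with (\ref{eq:extraA.1}), and both then need $p\geq 2$ to reach the constant $\frac{1}{(p+1)!}$. The only cosmetic difference is the final bookkeeping: the paper uses $\frac{3}{2}\geq\frac{p+1}{p}$ to get $H-H_{f,p}(\nu)\geq\frac{H}{p+1}$ directly, whereas you keep the surplus $\frac{H}{3}$ and then compare $3\,p!\leq(p+1)!$ --- the same calibration, tight at $p=2$ in both versions.
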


\begin{proof}
In view of (\ref{eq:2.8}) and (\ref{eq:extraA.1}), we have
\begin{eqnarray*}
f(x^{+})&\leq & \Omega_{\bar{x},p,H_{f,p}(\nu)}^{(\nu)}(x^{+})\\
        & =   & \Phi_{\bar{x},p}(x^{+})+\dfrac{H}{p!}\|x^{+}-\bar{x}\|^{p+\nu}-\dfrac{(H-H_{f,p}(\nu))}{p!}\|x^{+}-\bar{x}\|^{p+\nu}\\
        & =   &\Omega_{\bar{x},p,H}^{(\nu)}(x^{+})-\dfrac{(H-H_{f,p}(\nu))}{p!}\|x^{+}-\bar{x}\|^{p+\nu}\\
        &\leq & f(\bar{x})-\dfrac{(H-H_{f,p}(\nu))}{p!}\|x^{+}-\bar{x}\|^{p+\nu},
\end{eqnarray*}
which gives
\begin{equation*}
f(\bar{x})-f(x^{+})\geq \dfrac{H-H_{f,p}(\nu)}{p!}\|x^{+}-\bar{x}\|^{p+\nu}
\end{equation*}
Since $H\geq\frac{3}{2}H_{f,p}(\nu)\geq\dfrac{p+1}{p}H_{f,p}(\nu)$ for all $p\geq 2$, it follows that
\begin{equation*}
f(\bar{x})-f(x^{+})\geq\dfrac{\left(1-\frac{p}{p+1}\right)H}{p!}\|x^{+}-\bar{x}\|^{p+\nu}=\dfrac{H}{(p+1)!}\|x^{+}-\bar{x}\|^{p+\nu}.
\end{equation*}
\end{proof}

The next lemma provides a lower bound for the functional decrease in terms of a suitable power of the norm of the gradient when $\nu$ is known.
\begin{lemma}
\label{lem:extraA.2}
Let $H_{f,p}(\nu)<+\infty$ for some $\nu\in [0,1]$, and assume that $x^{+}\in\E$ satisfies (\ref{eq:extraA.1}) and
\begin{equation}
\|\nabla\Omega_{\bar{x},p,H}^{(\nu)}(x^{+})\|_{*}\leq\theta\|x^{+}-\bar{x}\|^{p+\nu-1}
\label{eq:extraA.3}
\end{equation}
for some $\bar{x}\in\E$, $H>0$, and $\theta\geq 0$. If
\begin{equation}
H\geq\max\left\{\dfrac{3H_{f,p}(\nu)}{2},3\theta (p-1)!\right\},
\label{eq:extraA.4}
\end{equation}
then
\begin{equation}
f(\bar{x})-f(x^{+})\geq\dfrac{1}{8(p+1)!H^{\frac{1}{p+\nu-1}}}\|\nabla f(x^{+})\|_{*}^{\frac{p+\nu}{p+\nu-1}}.
\label{eq:extraA.5}
\end{equation}
\end{lemma}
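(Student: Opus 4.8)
The plan is to combine the functional decrease already furnished by Lemma~\ref{lem:extraA.1} with an upper bound on $\|\nabla f(x^{+})\|_{*}$ expressed in terms of the step length $r\equiv\|x^{+}-\bar{x}\|$. First I would invoke Lemma~\ref{lem:extraA.1}, which is applicable because hypothesis (\ref{eq:extraA.4}) in particular forces $H\geq\frac{3}{2}H_{f,p}(\nu)$; its conclusion (\ref{eq:extraA.2}) gives
\begin{equation*}
f(\bar{x})-f(x^{+})\geq\dfrac{H}{(p+1)!}\,r^{p+\nu}.
\end{equation*}
It then remains only to bound $r$ from below by a suitable power of $\|\nabla f(x^{+})\|_{*}$.

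To get such a bound, I would insert the gradients of $\Phi_{\bar{x},p}$ and of $\Omega_{\bar{x},p,H}^{(\nu)}$ and use the triangle inequality,
\begin{equation*}
\|\nabla f(x^{+})\|_{*}\leq\|\nabla f(x^{+})-\nabla\Phi_{\bar{x},p}(x^{+})\|_{*}+\|\nabla\Phi_{\bar{x},p}(x^{+})-\nabla\Omega_{\bar{x},p,H}^{(\nu)}(x^{+})\|_{*}+\|\nabla\Omega_{\bar{x},p,H}^{(\nu)}(x^{+})\|_{*}.
\end{equation*}
The first term is controlled by (\ref{eq:2.5}), namely $\|\nabla f(x^{+})-\nabla\Phi_{\bar{x},p}(x^{+})\|_{*}\leq\frac{H_{f,p}(\nu)}{(p-1)!}r^{p+\nu-1}$. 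For the middle term a direct differentiation of the regularizer $\frac{H}{p!}\|\cdot-\bar{x}\|^{p+\nu}$ shows that its dual-norm gradient equals exactly $\frac{H(p+\nu)}{p!}r^{p+\nu-1}$. The third term is at most $\theta\, r^{p+\nu-1}$ by the stopping condition (\ref{eq:extraA.3}). Collecting these and then applying both parts of (\ref{eq:extraA.4}) (i.e.\ $H_{f,p}(\nu)\leq\frac{2}{3}H$ and $\theta\leq\frac{H}{3(p-1)!}$) together with $\nu\leq 1$ and $p\geq 2$ (so that $\frac{p+\nu}{p}\leq\frac{3}{2}$), I expect the coefficient to collapse to
\begin{equation*}
\|\nabla f(x^{+})\|_{*}\leq\dfrac{5H}{2(p-1)!}\,r^{p+\nu-1}.
\end{equation*}

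Finally I would invert this estimate, $r^{p+\nu-1}\geq\frac{2(p-1)!}{5H}\|\nabla f(x^{+})\|_{*}$, raise both sides to the power $\frac{p+\nu}{p+\nu-1}$ to recover $r^{p+\nu}$, and substitute into the decrease inequality from the first step, obtaining
\begin{equation*}
f(\bar{x})-f(x^{+})\geq\dfrac{1}{(p+1)!}\left(\dfrac{2(p-1)!}{5}\right)^{\frac{p+\nu}{p+\nu-1}}H^{-\frac{1}{p+\nu-1}}\|\nabla f(x^{+})\|_{*}^{\frac{p+\nu}{p+\nu-1}}.
\end{equation*}
The main obstacle is purely the bookkeeping of the numerical constant: the claim (\ref{eq:extraA.5}) requires $\left(\frac{2(p-1)!}{5}\right)^{(p+\nu)/(p+\nu-1)}\geq\frac{1}{8}$. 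The worst case is $p=2$, where $(p-1)!=1$ and the exponent attains its maximum value $2$ (at $\nu=0$), giving $(2/5)^{2}=4/25\geq 1/8$; for $p\geq 3$ the base is at least $4/5$ while the exponent is at most $3/2$, so the inequality holds with room to spare, and the desired bound (\ref{eq:extraA.5}) follows.
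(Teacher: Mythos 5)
Your proof is correct and takes essentially the same route as the paper's: the decrease supplied by Lemma~\ref{lem:extraA.1} combined with a triangle-inequality bound on $\|\nabla f(x^{+})\|_{*}$ using (\ref{eq:2.5}), the exact gradient of the regularization term, and the stopping condition (\ref{eq:extraA.3}). Your constant tracking is in fact more careful than the paper's, which collapses the bracketed coefficient to $2H$ --- an estimate that actually fails for $p=2$, $\nu>0$, where the sum equals $\left(2+\frac{\nu}{2}\right)H$ --- whereas your bound $\frac{5H}{2(p-1)!}$ is valid for all $p\geq 2$ and $\nu\in[0,1]$; both versions still yield (\ref{eq:extraA.5}) because the factor $\frac{1}{8}$ leaves enough slack, as your worst-case check at $p=2$, $\nu=0$ confirms.
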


\begin{proof}
By (\ref{eq:2.5}), (\ref{eq:2.7}), (\ref{eq:extraA.3}), and (\ref{eq:extraA.4}), we have
\begin{eqnarray*}
\|\nabla f(x^{+})\|_{*}&\leq &\|\nabla f(x^{+})-\nabla\Phi_{\bar{x},p}(x^{+})\|_{*}+\|\nabla\Phi_{\bar{x},p}(x^{+})-\nabla\Omega_{\bar{x},p,H}^{(\nu)}(x^{+})\|_{*}\\
                       &     & +\|\nabla\Omega_{\bar{x},p,H}^{(\nu)}(x^{+})\|_{*}\\
                       &\leq & \left[\dfrac{H_{f,p}(\nu)}{(p-1)!}+\dfrac{H(p+\nu)}{p!}+\theta \right]\|x^{+}-\bar{x}\|^{p+\nu-1}\\
                       &\leq & 2H\|x^{+}-\bar{x}\|^{p+\nu-1}.
\end{eqnarray*}
Thus,
\begin{equation}
\|x^{+}-\bar{x}\|^{p+\nu}\geq\left(\dfrac{1}{2H}\right)^{\frac{p+\nu}{p+\nu-1}}\|\nabla f(x^{+})\|_{*}^{\frac{p+\nu}{p+\nu-1}}.
\label{eq:extraA.6}
\end{equation}
On the other hand, by (\ref{eq:extraA.1}) and (\ref{eq:extraA.4}) it follows from Lemma \ref{lem:extraA.1} that
\begin{equation}
f(\bar{x})-f(x^{+})\geq\dfrac{H}{(p+1)!}\|x^{+}-\bar{x}\|^{p+\nu}.
\label{eq:extraA.7}
\end{equation}
Then, combining (\ref{eq:extraA.6}), and (\ref{eq:extraA.7}) we get (\ref{eq:extraA.5}).
\end{proof}

The lemma below gives lower bounds for powers of the norm of the displacement when $\nu$ is unknown.
\begin{lemma}
\label{lem:A.3}
Let $H_{f,p}(\nu)<+\infty$ for some $\nu\in [0,1]$, and assume that $x^{+}$ satisfies
\begin{equation}
\|\nabla\Omega_{\bar{x},p,H}^{(1)}(x^{+})\|_{*}\leq\theta\|x^{+}-\bar{x}\|^{p}
\label{eq:A.4}
\end{equation}
for some $\bar{x}\in\E$, $H>0$ and $\theta\geq 0$. If for some $\delta>0$ we have
\begin{equation}
\|\nabla f(x^{+})\|_{*}\geq\delta\quad\text{and}\quad H\geq\max\left\{\theta,(CH_{f,p}(\nu))^{\frac{p}{p+\nu-1}}\left(\dfrac{4}{\delta}\right)^{\frac{1-\nu}{p+\nu-1}}\right\},
\label{eq:A.5}
\end{equation}
with constant $C\geq 1$, then
\begin{equation}
\|x^{+}-\bar{x}\|^{1-\nu}\geq\dfrac{CH_{f,p}(\nu)}{H}
\label{eq:A.6}
\end{equation}
and, consequently,
\begin{equation}
4H\|x^{+}-\bar{x}\|^{p}\geq\|\nabla f(x^{+})\|_{*}.
\label{eq:A.7}
\end{equation}
\end{lemma}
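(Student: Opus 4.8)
The plan is to derive a single upper bound on $\|\nabla f(x^{+})\|_{*}$ and then extract both conclusions from it. Set $r=\|x^{+}-\bar{x}\|$. Inserting $\nabla\Phi_{\bar{x},p}(x^{+})$ and applying the triangle inequality,
\[
\|\nabla f(x^{+})\|_{*}\leq\|\nabla f(x^{+})-\nabla\Phi_{\bar{x},p}(x^{+})\|_{*}+\|\nabla\Phi_{\bar{x},p}(x^{+})-\nabla\Omega_{\bar{x},p,H}^{(1)}(x^{+})\|_{*}+\|\nabla\Omega_{\bar{x},p,H}^{(1)}(x^{+})\|_{*}.
\]
By (\ref{eq:2.5}) the first summand is at most $\frac{H_{f,p}(\nu)}{(p-1)!}r^{p+\nu-1}$. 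Since $\Omega_{\bar{x},p,H}^{(1)}$ exceeds $\Phi_{\bar{x},p}$ exactly by $\frac{H}{p!}\|\cdot-\bar{x}\|^{p+1}$, the middle summand is the dual norm of $\frac{H(p+1)}{p!}r^{p-1}B(x^{+}-\bar{x})$, namely $\frac{H(p+1)}{p!}r^{p}$; and the last summand is at most $\theta r^{p}$ by (\ref{eq:A.4}). Using $\theta\leq H$ from (\ref{eq:A.5}), together with $\frac{p+1}{p!}\leq\frac{3}{2}$ and $\frac{1}{(p-1)!}\leq 1$ for $p\geq 2$, I obtain the master estimate
\[
\|\nabla f(x^{+})\|_{*}\leq H_{f,p}(\nu)\,r^{p+\nu-1}+\tfrac{5}{2}Hr^{p},
\]
which I will call $(\star)$.

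Next I would establish (\ref{eq:A.6}) by contradiction. If $\nu=1$, then (\ref{eq:A.6}) reads $H\geq CH_{f,p}(\nu)$, which is precisely the second entry in the maximum in (\ref{eq:A.5}); so assume $\nu<1$ and suppose, to the contrary, that $r^{1-\nu}<CH_{f,p}(\nu)/H$. Raising the lower bound on $H$ in (\ref{eq:A.5}) to the power $(p+\nu-1)/p$ produces the two facts that drive the argument: on one hand $CH_{f,p}(\nu)\leq H^{(p+\nu-1)/p}(\delta/4)^{(1-\nu)/p}$, an \emph{upper} bound on $H_{f,p}(\nu)$; on the other hand, substituting this into $Hr^{1-\nu}<CH_{f,p}(\nu)$ and solving for $r$ (permissible because $1-\nu>0$) gives $r^{p}<\delta/(4H)$, whence $\frac{5}{2}Hr^{p}<\frac{5\delta}{8}$. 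Plugging the $H_{f,p}(\nu)$-bound and $r^{p+\nu-1}<(\delta/(4H))^{(p+\nu-1)/p}$ into the first term of $(\star)$, the powers of $H$ cancel and the powers of $\delta/4$ sum to $1$, so that term is at most $\delta/(4C)\leq\delta/4$. Hence $(\star)$ forces $\|\nabla f(x^{+})\|_{*}<\delta/4+5\delta/8=7\delta/8<\delta$, contradicting $\|\nabla f(x^{+})\|_{*}\geq\delta$. Therefore (\ref{eq:A.6}) holds.

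Finally, (\ref{eq:A.7}) follows from (\ref{eq:A.6}) through $(\star)$. Writing the first term of $(\star)$ as $\frac{H_{f,p}(\nu)}{Hr^{1-\nu}}\,Hr^{p}$ and using $Hr^{1-\nu}\geq CH_{f,p}(\nu)\geq H_{f,p}(\nu)$ (from (\ref{eq:A.6}) and $C\geq 1$), this term is at most $Hr^{p}$. Thus $(\star)$ gives $\|\nabla f(x^{+})\|_{*}\leq Hr^{p}+\frac{5}{2}Hr^{p}=\frac{7}{2}Hr^{p}\leq 4Hr^{p}$, which is exactly (\ref{eq:A.7}).

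The main obstacle is an apparent circularity: controlling the H\"older term $H_{f,p}(\nu)r^{p+\nu-1}$ in $(\star)$ seems to require the lower bound on $r$ encoded in (\ref{eq:A.6}), yet (\ref{eq:A.6}) is exactly what must be proved. The resolution is that under the negation of (\ref{eq:A.6}) the hypothesis (\ref{eq:A.5}) supplies \emph{both} an upper bound on $H_{f,p}(\nu)$ \emph{and} the smallness $r^{p}<\delta/(4H)$; the exponents in (\ref{eq:A.5}) are calibrated precisely so that these two facts combine to make the H\"older term collapse to a multiple of $\delta$, closing the contradiction without presupposing (\ref{eq:A.6}).
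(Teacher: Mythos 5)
Your proof is correct and follows essentially the same route as the paper: the same triangle-inequality decomposition through $\nabla\Phi_{\bar{x},p}(x^{+})$ yielding the master bound (the paper's (\ref{eq:A.8})), the same contradiction argument from the negation of (\ref{eq:A.6}) with the same calibration of exponents, and the same use of (\ref{eq:A.6}) to absorb the H\"older term and conclude (\ref{eq:A.7}). The only cosmetic difference is that you close the contradiction by forcing $\|\nabla f(x^{+})\|_{*}<\delta$, whereas the paper rearranges the same chain into an upper bound on $H$ that violates (\ref{eq:A.5}); these are equivalent.
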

\begin{proof}
For $\nu=1$, (\ref{eq:A.6}) is obvious. Thus, assume that $\nu\in [0,1)$ and denote $r=\|x^{+}-\bar{x}\|$. Then, by (\ref{eq:2.5}), (\ref{eq:2.7}), and (\ref{eq:A.4}), we have
\begin{eqnarray}
\delta &<& \|\nabla f(x^{+})\|_{*}\nonumber\\
       &\leq & \|\nabla f(x^{+})-\nabla\Phi_{\bar{x},p}(x^{+})\|_{*}
       +\|\nabla\Phi_{\bar{x},p}(x^{+})-\nabla\Omega_{\bar{x},p,H}^{(1)}(x^{+})\|_{*}\nonumber\\
       & & +\|\nabla\Omega_{\bar{x},p,H}^{(1)}(x^{+})\|_{*}\nonumber\\
       &\leq &\dfrac{H_{f,p}(\nu)}{(p-1)!}r^{p+\nu-1}+\left(\dfrac{H(p+1)}{p!}+\theta\right)r^{p}\nonumber\\
       & =   &r^{p+\nu-1}\left[\dfrac{H_{f,p}(\nu)}{(p-1)!}+\left(\dfrac{p+1}{p!}+\dfrac{\theta}{H}\right)Hr^{1-\nu}\right].
\label{eq:A.8}
\end{eqnarray}
Assume by contradiction that (\ref{eq:A.6}) is not true, i.e., $Hr^{1-\nu}<CH_{f,p}(\nu)$. Since $H\geq\theta$ and $C\geq 1$, it follows that
\begin{eqnarray*}
\delta &<& r^{p+\nu-1}\left[\dfrac{H_{f,p}(\nu)}{(p-1)!}+\left(\dfrac{p+1}{p!}+1\right)CH_{f,p}(\nu)\right]\\
       & = &\dfrac{r^{p+\nu-1}H_{f,p}(\nu)}{(p-1)!}\left[1+\dfrac{C(p+1)}{p}+C(p-1)!\right]\\
       &\leq & 4CH_{f,p}(\nu)r^{p+\nu-1}
       < 4CH_{f,p}(\nu)\left(\dfrac{CH_{f,p}(\nu)}{H}\right)^{\frac{p+\nu-1}{1-\nu}}\\
       & =   & 4(CH_{f,p}(\nu))^{\frac{p}{1-\nu}}\left(\dfrac{1}{H}\right)^{\frac{p+\nu-1}{1-\nu}}.
\end{eqnarray*}
This implies that
$
H<(CH_{f,p}(\nu))^{\frac{p}{p+\nu-1}}\left(\dfrac{4}{\delta}\right)^{\frac{1-\nu}{p+\nu-1}}
$
contradicting the second inequality in (\ref{eq:A.5}). Therefore, (\ref{eq:A.6}) holds.

Finally, let us prove (\ref{eq:A.7}). In view of inequality (\ref{eq:A.6}), we have
\begin{equation*}
\dfrac{H_{f,p}(\nu)}{(p-1)!}\leq \dfrac{H}{C(p-1)!}r^{1-\nu}.
\end{equation*}
Thus, it follows from (\ref{eq:A.8}) that
\begin{eqnarray*}
\|\nabla f(x^{+})\|_{*}&\leq & r^{p+\nu-1}\left[\dfrac{H}{C(p-1)!}r^{1-\nu}+\left(\dfrac{p+1}{p!}+\dfrac{\theta}{H}\right)Hr^{1-\nu}\right]\\
                       & =  &r^{p}H\left[\dfrac{1}{Cp!}+\dfrac{(p+1)}{p!}+\dfrac{\theta}{H}\right]\leq  4r^{p}H.
\end{eqnarray*}
\end{proof}

Now, using Lemma \ref{lem:A.3}, we obtain a lower bound for the functional decrease in terms of a computable power of the norm of the displacement, when $\nu$ is unknown.

\begin{lemma}
\label{lem:A.4}
Let $H_{f,p}(\nu)<+\infty$ for some $\nu\in [0,1]$ and assume that $x^{+}\in\E$ satisfies
\begin{equation}
\Omega_{\bar{x},p,H}^{(1)}(x^{+})<f(\bar{x})
\label{eq:A.9}
\end{equation}
and
\begin{equation}
\|\nabla\Omega_{\bar{x},p,H}^{(1)}(x^{+})\|_{*}\leq\theta\|x^{+}-\bar{x}\|^{p}
\label{eq:A.10}
\end{equation}
for some $\bar{x}\in\E$, $H>0$ and $\theta\geq 0$. If for some $\delta>0$ we have
\begin{equation}
\|\nabla f(x^{+})\|_{*}\geq\delta\quad\text{and}\quad H\geq\max\left\{\theta,(CH_{f,p}(\nu))^{\frac{p}{p+\nu-1}}\left(\dfrac{4}{\delta}\right)^{\frac{1-\nu}{p+\nu-1}}\right\},
\label{eq:A.11}
\end{equation}
with constant $C\geq\frac{3}{2}$, then
\begin{equation}
f(\bar{x})-f(x^{+})\geq\dfrac{H}{(p+1)!}\|x^{+}-\bar{x}\|^{p+1}.
\label{eq:A.12}
\end{equation}
\end{lemma}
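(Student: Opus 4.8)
The plan is to mimic the structure of Lemma~\ref{lem:extraA.1}, but to account for the mismatch between the H\"older exponent $\nu$ that governs the smoothness of $f$ and the exponent $1$ used in the regularizer of the model $\Omega^{(1)}_{\bar{x},p,H}$. Writing $r=\|x^{+}-\bar{x}\|$, I would first apply the global upper bound (\ref{eq:2.8}) with the exact constant $H_{f,p}(\nu)$, namely $f(x^{+})\le \Phi_{\bar{x},p}(x^{+})+\frac{H_{f,p}(\nu)}{p!}r^{p+\nu}$, and then rewrite the right-hand side by adding and subtracting $\frac{H}{p!}r^{p+1}$ so that the polynomial part reassembles into $\Omega^{(1)}_{\bar{x},p,H}(x^{+})$. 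Invoking hypothesis (\ref{eq:A.9}) to replace $\Omega^{(1)}_{\bar{x},p,H}(x^{+})$ by $f(\bar{x})$ then yields
$$f(\bar{x})-f(x^{+})>\frac{1}{p!}\,r^{p+1}\bigl[H-H_{f,p}(\nu)\,r^{\nu-1}\bigr].$$

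The decisive second step is to show that the bracketed quantity is at least $\tfrac{1}{3}H$, which is where the companion result Lemma~\ref{lem:A.3} enters. Hypotheses (\ref{eq:A.10}) and (\ref{eq:A.11}) of the present statement are precisely conditions (\ref{eq:A.4}) and (\ref{eq:A.5}) of that lemma (the constant $C\ge\tfrac{3}{2}$ certainly satisfies $C\ge 1$), so its conclusion (\ref{eq:A.6}) applies and gives $r^{1-\nu}\ge C H_{f,p}(\nu)/H$, equivalently $H_{f,p}(\nu)\,r^{\nu-1}\le H/C$. Since $C\ge\tfrac{3}{2}$, this forces
$$H-H_{f,p}(\nu)\,r^{\nu-1}\ge H\Bigl(1-\tfrac{1}{C}\Bigr)\ge\tfrac{1}{3}H.$$
The case $\nu=1$ is covered by the same argument, since then $r^{\nu-1}=1$ and (\ref{eq:A.6}) reduces to the hypothesis $H\ge C H_{f,p}(1)$ already contained in (\ref{eq:A.11}).

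Combining the two displays gives $f(\bar{x})-f(x^{+})\ge\frac{H}{3\,p!}\,r^{p+1}$, and I would close the argument by observing that for $p\ge 2$ one has $\tfrac{1}{3}\ge\tfrac{1}{p+1}$, hence $\frac{H}{3\,p!}\ge\frac{H}{(p+1)!}$, which is exactly (\ref{eq:A.12}). I expect no genuine analytic difficulty here; the work is almost entirely bookkeeping. The one point requiring care is verifying that the hypotheses of Lemma~\ref{lem:A.3} transfer verbatim---in particular that the regularizer exponent $1$ (not $\nu$) is the one appearing in (\ref{eq:A.9})--(\ref{eq:A.10}), so that the lemma's control of the ``wrong-power'' term $H_{f,p}(\nu)r^{p+\nu}$ is available---and confirming that the factor $\tfrac{1}{3}$ produced by $C=\tfrac{3}{2}$ dominates $\tfrac{1}{p+1}$ exactly under the standing assumption $p\ge 2$ used throughout the tensor-method analysis.
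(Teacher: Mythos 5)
Your proof is correct, and its first half is exactly the paper's: both derive $f(\bar{x})-f(x^{+})\geq\frac{1}{p!}\,r^{p+1}\bigl[H-H_{f,p}(\nu)r^{\nu-1}\bigr]$ with $r=\|x^{+}-\bar{x}\|>0$ (the paper's (\ref{eq:A.13})) by the same add-and-subtract of $\frac{H}{p!}r^{p+1}$ inside the bound (\ref{eq:2.8}) and then invoking (\ref{eq:A.9}). Where you genuinely differ is the finish. The paper argues by contradiction: assuming (\ref{eq:A.12}) fails, it obtains $\frac{Hp}{p+1}<H_{f,p}(\nu)r^{\nu-1}$, then uses conclusion (\ref{eq:A.7}) of Lemma \ref{lem:A.3}, namely $4Hr^{p}\geq\|\nabla f(x^{+})\|_{*}\geq\delta$, to bound $r^{\nu-1}\leq(\delta/4H)^{(\nu-1)/p}$, and re-solves the resulting inequality for $H$ to contradict the threshold in (\ref{eq:A.11}). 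You instead invoke the other conclusion, (\ref{eq:A.6}), of the same lemma, which yields $H_{f,p}(\nu)r^{\nu-1}\leq H/C\leq\frac{2}{3}H$ directly, so the bracket is at least $\frac{1}{3}H$ and (\ref{eq:A.12}) follows at once from $\frac{1}{3}\geq\frac{1}{p+1}$. Your route is shorter and avoids both the contradiction and the re-derivation of the exponents appearing in (\ref{eq:A.11}) (in your argument those exponents enter only through Lemma \ref{lem:A.3}); the paper's route has the expository merit of re-exposing where that specific threshold comes from, but the two uses of $C\geq\frac{3}{2}$ play the same role, since you need it to make $1-\frac{1}{C}\geq\frac{1}{3}$. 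Both proofs rely on $p\geq 2$ at the very end (yours via $\frac{1}{3}\geq\frac{1}{p+1}$, the paper's via $\frac{p+1}{p}\leq\frac{3}{2}$), which is consistent with the paper's standing convention (see the proof of Lemma \ref{lem:extraA.1}). Two small points you pass over but which do hold: the strict inequality in (\ref{eq:A.9}) forces $r>0$, so the factorization $r^{p+\nu}=r^{p+1}r^{\nu-1}$ and the inversion of (\ref{eq:A.6}) are legitimate; and if $H_{f,p}(\nu)=0$ the bracket equals $H$ and nothing needs to be proved.
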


\begin{proof}
In view of (\ref{eq:2.4}), (\ref{eq:2.7}), and (\ref{eq:A.9}), we have
\begin{eqnarray*}
f(x^{+})&\leq & \Omega_{\bar{x},p,H_{f,p}(\nu)}^{(\nu)}(x^{+})\\
        & =   & \Phi_{\bar{x},p}(x^{+})+\dfrac{H_{f,p}(\nu)}{p!}\|x^{+}-\bar{x}\|^{p+\nu}\\
        & =   &\Phi_{\bar{x},p}(x^{+})+\dfrac{H}{p!}\|x^{+}-\bar{x}\|^{p+1}-\dfrac{H}{p!}\|x^{+}-\bar{x}\|^{p+1}+\dfrac{H_{f,p}(\nu)}{p!}\|x^{+}-\bar{x}\|^{p+\nu}\\
        & =  &\Omega_{\bar{x},p,H}^{(1)}(x^{+})-\dfrac{H}{p!}\|x^{+}-\bar{x}\|^{p+1}+\dfrac{H_{f,p}(\nu)}{p!}\|x^{+}-\bar{x}\|^{p+\nu}\\
        & <  & f(\bar{x})-\dfrac{H}{p!}\|x^{+}-\bar{x}\|^{p+1}+\dfrac{H_{f,p}(\nu)}{p!}\|x^{+}-\bar{x}\|^{p+\nu}
\end{eqnarray*}
and so
\begin{equation}
f(\bar{x})-f(x^{+})\geq \dfrac{H}{p!}\|x^{+}-\bar{x}\|^{p+1}-\dfrac{H_{f,p}(\nu)}{p!}\|x^{+}-\bar{x}\|^{p+\nu}.
\label{eq:A.13}
\end{equation}
Assume by contradiction that (\ref{eq:A.12}) is not true, i.e.,
\begin{equation}
f(\bar{x})-f(x^{+})<\dfrac{H}{(p+1)!}\|x^{+}-\bar{x}\|^{p+1}.
\label{eq:A.14}
\end{equation}
Then, combining (\ref{eq:A.13}) and (\ref{eq:A.14}), we obtain
\begin{equation*}
\dfrac{H}{p!}\|x^{+}-\bar{x}\|^{p+1}-\dfrac{H_{f,p}(\nu)}{p!}\|x^{+}-\bar{x}\|^{p+\nu}<\dfrac{H}{(p+1)!}\|x^{+}-\bar{x}\|^{p+1}
\end{equation*}
which implies that
\begin{equation}
H\left(1-\dfrac{1}{p+1}\right)<H_{f,p}(\nu)\|x^{+}-\bar{x}\|^{\nu-1}.
\label{eq:A.15}
\end{equation}
By (\ref{eq:A.10}) and (\ref{eq:A.11}), the conclusions of Lemma \ref{lem:A.3} hold. In particular, we have
\begin{equation*}
4H\|x^{+}-\bar{x}\|^{p}\geq\|\nabla f(x^{+})\|_{*}\geq\delta
\end{equation*}
and so
\begin{equation}
\|x^{+}-\bar{x}\|^{\nu-1}\leq\left(\dfrac{\delta}{4H}\right)^{\frac{\nu-1}{p}}.
\label{eq:A.16}
\end{equation}
Then it follows from (\ref{eq:A.15}) and (\ref{eq:A.16}) that
\begin{equation*}
\dfrac{Hp}{p+1}<H_{f,p}(\nu)\left(\dfrac{\delta}{4H}\right)^{\frac{\nu-1}{p}}\Longrightarrow H<\left(\frac{3}{2}H_{f,p}(\nu)\right)^{\frac{p}{p+\nu-1}}\left(\dfrac{4}{\delta}\right)^{\frac{1-\nu}{p+\nu-1}},
%
\end{equation*}
contradicting the second inequality in (\ref{eq:A.11}). Therefore, (\ref{eq:A.12}) is true.
\end{proof}

Finally, the next lemma gives a lower bound for the functional decrease in terms of a computable power of the norm of the gradient when $\nu$ is unknown.

\begin{corollary}
\label{cor:A.5}
Let $H_{f,p}(\nu)<+\infty$ for some $\nu\in [0,1]$, and assume that $x^{+}\in\E$ satisfies (\ref{eq:A.9}) and (\ref{eq:A.10}) for some $\bar{x}\in\E$, $H>0$, and $\theta\geq 0$. Given $\delta>0$, define
\begin{equation}
\xi_{\nu}(\delta)\equiv\max\left\{\theta,\left(\dfrac{3H_{f,p}(\nu)}{2}\right)^{\frac{p}{p+\nu-1}}\left(\dfrac{4}{\delta}\right)^{\frac{1-\nu}{p+\nu-1}}\right\}.
\label{eq:A.17}
\end{equation}
If $\|\nabla f(x^{+})\|_{*}\geq\delta$ and $H\geq\xi_{\nu}(\delta)$, then
\begin{equation*}
f(\bar{x})-f(x^{+})\geq\dfrac{1}{8(p+1)!H^{\frac{1}{p}}}\|\nabla f(x^{+})\|^{\frac{p+1}{p}}.
\end{equation*}
\end{corollary}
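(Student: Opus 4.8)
The plan is to derive this corollary as an immediate consequence of Lemmas~\ref{lem:A.4} and~\ref{lem:A.3}, which have already done the heavy lifting. First I would observe that the threshold $\xi_{\nu}(\delta)$ in (\ref{eq:A.17}) is exactly the lower bound on $H$ appearing in (\ref{eq:A.11}) for the choice $C=\frac{3}{2}$. Since $\frac{3}{2}\geq\frac{3}{2}$, the hypotheses of Lemma~\ref{lem:A.4} are met, and since $\frac{3}{2}\geq 1$, those of Lemma~\ref{lem:A.3} are met as well (conditions (\ref{eq:A.9}), (\ref{eq:A.10}) and $\|\nabla f(x^{+})\|_{*}\geq\delta$ being common to both). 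Thus I may invoke both conclusions (\ref{eq:A.12}) and (\ref{eq:A.7}) simultaneously.

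The core computation then proceeds as follows. From (\ref{eq:A.7}) I would isolate
\[
\|x^{+}-\bar{x}\|^{p}\geq\frac{\|\nabla f(x^{+})\|_{*}}{4H},
\]
raise both sides to the power $\frac{p+1}{p}$ to obtain a lower bound on $\|x^{+}-\bar{x}\|^{p+1}$, and substitute this into the descent estimate (\ref{eq:A.12}). After collecting the powers of $H$ (note $1-\frac{p+1}{p}=-\frac{1}{p}$), the factor $\frac{H}{(p+1)!}$ combined with $(4H)^{-(p+1)/p}$ leaves
\[
f(\bar{x})-f(x^{+})\geq\frac{1}{(p+1)!\,4^{(p+1)/p}\,H^{1/p}}\,\|\nabla f(x^{+})\|_{*}^{(p+1)/p}.
\]

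The only nonroutine step is absorbing the constant $4^{(p+1)/p}$ into the clean $8$ of the stated bound, and this is where I expect the main (mild) obstacle to lie. One must use the standing assumption $p\geq 2$, under which $4^{1/p}\leq 4^{1/2}=2$ and hence $4^{(p+1)/p}=4\cdot 4^{1/p}\leq 8$. With this estimate the denominator is at most $8(p+1)!\,H^{1/p}$, yielding exactly the desired conclusion
\[
f(\bar{x})-f(x^{+})\geq\frac{1}{8(p+1)!\,H^{1/p}}\,\|\nabla f(x^{+})\|_{*}^{(p+1)/p}.
\]
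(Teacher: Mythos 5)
Your proposal is correct and follows essentially the same route as the paper's own proof: both combine the gradient bound (\ref{eq:A.7}) from Lemma \ref{lem:A.3} (raised to the power $\frac{p+1}{p}$) with the descent estimate (\ref{eq:A.12}) from Lemma \ref{lem:A.4}, and then absorb $4^{(p+1)/p}\leq 8$ into the constant. Your explicit verification that $\xi_{\nu}(\delta)$ is the threshold (\ref{eq:A.11}) with $C=\tfrac{3}{2}$ (hence valid for both lemmas) and that the constant absorption needs $p\geq 2$ is slightly more careful than the paper, which leaves both points implicit.
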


\begin{proof}
From inequality (\ref{eq:A.7}) in Lemma \ref{lem:A.3} we have
\begin{equation*}
\|x^{+}-\bar{x}\|^{p}\geq\dfrac{1}{4H}\|\nabla f(x^{+})\|_{*}
\end{equation*}
which implies that
\begin{equation*}
\|x^{+}-\bar{x}\|^{p+1}\geq\left(\dfrac{1}{4H}\right)^{\frac{p+1}{p}}\|\nabla f(x^{+})\|_{*}^{\frac{p+1}{p}}.
\end{equation*}
Then, it follows from inequality (\ref{eq:A.12}) in Lemma \ref{lem:A.4} that
\begin{eqnarray*}
f(\bar{x})-f(x^{+})& \geq &\dfrac{H}{(p+1)!}\|x^{+}-\bar{x}\|^{p+1}\geq \dfrac{H}{(p+1)!}\dfrac{1}{4^{\frac{p+1}{p}}H^{\frac{p+1}{p}}}\|\nabla f(x^{+})\|_{*}^{\frac{p+1}{p}}\\
                   &\geq &\dfrac{1}{8(p+1)!H^{\frac{1}{p}}}\|\nabla f(x^{+})\|_{*}^{\frac{p+1}{p}}.
\end{eqnarray*}
\end{proof}

%

\subsection{Results for accelerated schemes}
For the case in which $\nu$ is known, the lemma below establishes that (4.6) is achievable when the regularization parameter is sufficiently large.
\begin{lemma}
\label{lem:A.6}
Let $H_{f,p}(\nu)<+\infty$ for some $\nu\in [0,1]$, and assume that $x^{+}$ satisfies
\begin{equation}
\|\nabla\Omega_{\bar{x},p,H}^{(\nu)}(x^{+})\|_{*}\leq\theta\|x^{+}-\bar{x}\|^{p+\nu-1}
\label{eq:A.18}
\end{equation}
for some $\bar{x}\in\E$, $H>0$ and $\theta\geq 0$. If
\begin{equation}
H\geq (p+\nu-1)\left(H_{f,p}(\nu)+\theta (p-1)!\right),
\label{eq:A.19}
\end{equation}
then
\begin{equation}
\la\nabla f(x^{+}),\bar{x}-x^{+} \ra\geq\dfrac{1}{3}\left[\dfrac{(p-1)!}{H}\right]^{\frac{1}{p+\nu-1}}\|\nabla f(x^{+})\|_{*}^{\frac{p+\nu}{p+\nu-1}}.
\label{eq:A.20}
\end{equation}
\end{lemma}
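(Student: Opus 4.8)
The plan is to write $\nabla f(x^{+})$ as the gradient of the regularizer plus a small residual, and then to extract (\ref{eq:A.20}) from the interplay between the inner product $\la\nabla f(x^{+}),\bar{x}-x^{+}\ra$ and the squared dual norm $\|\nabla f(x^{+})\|_{*}^{2}$. First I would differentiate the model (\ref{eq:2.7}) with $\alpha=\nu$, obtaining
$$\nabla\Omega_{\bar{x},p,H}^{(\nu)}(x^{+})=\nabla\Phi_{\bar{x},p}(x^{+})+\frac{H(p+\nu)}{p!}\|x^{+}-\bar{x}\|^{p+\nu-2}B(x^{+}-\bar{x}).$$
Writing $r=\|x^{+}-\bar{x}\|$ and $w=\frac{H(p+\nu)}{p!}r^{p+\nu-2}B(x^{+}-\bar{x})$, this lets me decompose $\nabla f(x^{+})=-w+z$, where $z=\nabla\Omega_{\bar{x},p,H}^{(\nu)}(x^{+})+\big(\nabla f(x^{+})-\nabla\Phi_{\bar{x},p}(x^{+})\big)$. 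By the H\"older estimate (\ref{eq:2.5}) and the hypothesis (\ref{eq:A.18}) the residual satisfies $\|z\|_{*}\le Kr^{p+\nu-1}$ with $K=\frac{H_{f,p}(\nu)}{(p-1)!}+\theta$, whereas $\|w\|_{*}=Lr^{p+\nu-1}$ and $\la w,x^{+}-\bar{x}\ra=Lr^{p+\nu}$ with $L=\frac{H(p+\nu)}{p!}$.

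Next I would record two consequences. Putting $\Delta=\la\nabla f(x^{+}),\bar{x}-x^{+}\ra$ and $\sigma=\la z,x^{+}-\bar{x}\ra$, the decomposition gives the exact identity $\Delta=Lr^{p+\nu}-\sigma$; since $|\sigma|\le Kr^{p+\nu}$ and $L>K$ under (\ref{eq:A.19}), this already yields the one-sided bound $\Delta\ge(L-K)r^{p+\nu}>0$. Expanding $\|\nabla f(x^{+})\|_{*}^{2}=\|w\|_{*}^{2}-2\la w,B^{-1}z\ra+\|z\|_{*}^{2}$ and substituting $\la w,B^{-1}z\ra=Lr^{p+\nu-2}\sigma$ together with $\sigma=Lr^{p+\nu}-\Delta$, the quadratic terms in $L$ cancel and, after discarding the nonpositive remainder $\|z\|_{*}^{2}-L^{2}r^{2(p+\nu-1)}$, I obtain the key coupling
$$\|\nabla f(x^{+})\|_{*}^{2}\le\frac{2H(p+\nu)}{p!}\,r^{p+\nu-2}\,\Delta.$$

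Then I would eliminate $r$. When $p+\nu>2$ the bound $\Delta\ge(L-K)r^{p+\nu}$ gives $r^{p+\nu-2}\le\big(\Delta/(L-K)\big)^{(p+\nu-2)/(p+\nu)}$; substituting this into the coupling and solving for $\Delta$ produces $\Delta\ge c\,\|\nabla f(x^{+})\|_{*}^{(p+\nu)/(p+\nu-1)}$ with $c=(L-K)^{(p+\nu-2)/(2(p+\nu-1))}\big/(2L)^{(p+\nu)/(2(p+\nu-1))}$. The borderline case $p+\nu=2$ is easier and must be treated separately, because there $r^{p+\nu-2}=1$ and the coupling reads $\|\nabla f(x^{+})\|_{*}^{2}\le 2L\Delta$ directly, so no division by $L-K$ is needed. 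Finally I would invoke (\ref{eq:A.19}) in the form $\frac{H_{f,p}(\nu)}{(p-1)!}+\theta\le\frac{H}{(p+\nu-1)(p-1)!}$ to bound $L$ and $L-K$ purely through $H$ and $(p-1)!$, which makes $c$ factor as $\big[(p-1)!/H\big]^{1/(p+\nu-1)}$ times a dimensionless quantity.

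The main obstacle will be this last factor: establishing that it is at least $\tfrac13$. After the substitutions it reduces to the purely numerical inequality
$$\frac{(a-b)^{\frac{p+\nu-2}{2(p+\nu-1)}}}{(2a)^{\frac{p+\nu}{2(p+\nu-1)}}}\ge\frac13,\qquad a=\frac{p+\nu}{p},\quad b=\frac{1}{p+\nu-1}.$$
I expect this to hold for all $p\ge2$ and $\nu\in[0,1]$ with room to spare --- consistent with the fact that the acceptance test (\ref{eq:4.7}) only needs the weaker constant $\tfrac14$ --- but verifying it uniformly (monotonicity in $\nu$, and the boundary $p+\nu=2$, where $a-b$ degenerates and the simpler coupling above takes over) is the delicate point of the argument.
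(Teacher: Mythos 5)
Your proposal is correct, and its skeleton coincides with the paper's: your residual $z$ is exactly the vector $\nabla f(x^{+})-\nabla\Phi_{\bar{x},p}(x^{+})+\nabla\Omega_{\bar{x},p,H}^{(\nu)}(x^{+})$ that the paper's proof bounds in norm by $\left(\frac{H_{f,p}(\nu)}{(p-1)!}+\theta\right)r^{p+\nu-1}$ using (\ref{eq:2.5}) and (\ref{eq:A.18}). Where you genuinely depart is the elimination of $r$. The paper squares that estimate to obtain the single inequality $\Delta\geq\frac{g^{2}}{2Lr^{p+\nu-2}}+\frac{L^{2}-K^{2}}{2L}\,r^{p+\nu}$ (its (\ref{eq:A.21})) and then minimizes the right-hand side exactly over $r>0$, the constant $\frac13$ coming from the value at the stationary point $r_{*}$. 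You never form this joint bound; instead you extract the separate lever $\Delta\geq(L-K)r^{p+\nu}$ directly from the inner product --- an inequality absent from the paper --- and substitute it into the coupling $g^{2}\leq 2Lr^{p+\nu-2}\Delta$. Both are sound: the paper's exact minimization keeps the cross term and hence somewhat larger constants (about $0.5$ at worst, versus about $0.43$ for yours), while your route needs no calculus, and your case split at $p+\nu=2$ is the correct one (the paper's split is at ``$\nu=0$'', although its optimization degenerates only when $p+\nu=2$).

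The one step you left open --- the dimensionless inequality $\geq\frac13$ --- is true, so there is no fatal flaw, but it must be closed for the proof to be complete, and it can be closed cheaply. With $s=p+\nu$ the two exponents sum to one, $\frac{s-2}{2(s-1)}+\frac{s}{2(s-1)}=1$, and $2a=\frac{2(p+\nu)}{p}\leq 3$ for all $p\geq2$, $\nu\in[0,1]$; hence
\[
\frac{(a-b)^{\frac{s-2}{2(s-1)}}}{(2a)^{\frac{s}{2(s-1)}}}\;\geq\;\frac{(1/3)^{\frac{s-2}{2(s-1)}}}{3^{\frac{s}{2(s-1)}}}\;=\;\frac{1}{3}\qquad\text{whenever}\quad a-b\geq\tfrac13 .
\]
Since $a-b=\frac{s(s-1)-p}{p(s-1)}\geq\frac{s-2}{s-1}\geq\frac12$ for every $p\geq3$, only $p=2$ needs care; there $a-b=\frac{\nu(\nu+3)}{2(1+\nu)}\geq\frac13$ exactly when $3\nu^{2}+7\nu-2\geq 0$, i.e.\ $\nu\geq\frac{\sqrt{73}-7}{6}\approx 0.26$. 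On the leftover strip $p=2$, $0<\nu<0.26$, take logarithms and multiply by $2(1+\nu)$: the claim becomes $\nu\ln\frac{2(1+\nu)}{\nu(\nu+3)}\leq 2(1+\nu)\ln 3-(2+\nu)\ln(2+\nu)$. The right-hand side is increasing in $\nu$ there, so it is at least $2\ln\frac32>0.8$, while the left-hand side is at most $\nu\ln\frac{0.84}{\nu}$ (because $\frac{2(1+\nu)}{\nu+3}\leq\frac{2.52}{3}=0.84$ on that range), which is at most $0.26\ln\frac{0.84}{0.26}<0.31$. Combined with your direct treatment of the boundary $p+\nu=2$ (which yields the constant $\frac12$), this finishes the argument; numerically the minimum of your constant is about $0.43$, attained near $p=2$, $\nu\approx 0.4$, confirming your ``room to spare'' expectation.
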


\begin{proof}
Denote $r=\|x^{+}-\bar{x}\|$. Then, by (\ref{eq:2.5}), (\ref{eq:2.7}), and (\ref{eq:A.18}), we have
\begin{eqnarray*}
\|\nabla f(x^{+})+\dfrac{H(p+\nu)}{p!}r^{p+\nu-2}B(x^{+}-\bar{x})\|_{*}&=&\|\nabla f(x^{+})-\nabla\Phi_{\bar{x},p}(x^{+})+\nabla\Omega_{\bar{x},p,H}^{(\nu)}(x^{+})\|_{*}\\
&\leq & \|\nabla f(x^{+})-\nabla\Phi_{\bar{x},p}(x^{+})\|_{*}+\|\nabla\Omega_{\bar{x},p,H}^{(\nu)}(x^{+})\|_{*}\\
& \leq &\left(\dfrac{H_{f,p}(\nu)}{(p-1)!}+\theta\right)r^{p+\nu-1}.
\end{eqnarray*}
Thus, we obtain
\begin{eqnarray*}
\left(\dfrac{H_{f,p}(\nu)}{(p-1)!}+\theta\right)r^{2(p+\nu-1)}&\geq &\|\nabla f(x^{+})+\dfrac{H(p+\nu)}{p!}r^{p+\nu-2}B(x^{+}-\bar{x})\|_{*}^{2}\\
& = &\|\nabla f(x^{+})\|_{*}^{2}+\dfrac{2(p+\nu)}{p!}Hr^{p+\nu-2}\langle\nabla f(x^{+}),x^{+}-\bar{x}\rangle\\
& &+\dfrac{H^{2}(p+\nu)^{2}}{(p!)^{2}}r^{2(p+\nu-1)}
\end{eqnarray*}
which implies that
\begin{equation}
\la \nabla f(x^{+}),\bar{x}-x^{+}\ra \geq\dfrac{p!}{2(p+\nu)Hr^{p+\nu-2}}\|\nabla f(x^{+})\|_{*}^{2}+\dfrac{p^{2}H}{2(p+\nu)p!}\left[1-\left(\dfrac{H_{f,p}(\nu)+\theta (p-1)!}{H}\right)^{2}\right]r^{p+\nu}.
\label{eq:A.21}
\end{equation}
For $\nu=0$, (\ref{eq:A.21}) leads to the desired relation. Let us assume that $\nu>0$. Denote $g=\|\nabla f(x^{+})\|$ and
$
\Delta^{2}=1-\left(\dfrac{H_{f,p}(\nu)+\theta (p-1)!}{H}\right)^{2}.
$
By (\ref{eq:A.19}), we have
\begin{equation}
\Delta^{2}\geq 1-\dfrac{1}{(p+\nu-1)^{2}}=\dfrac{(p+\nu-1)^{2}-1}{(p+\nu-1)^{2}}=\dfrac{(p+\nu-2)(p+\nu)}{(p+\nu-1)^{2}}>0.
\label{eq:A.22}
\end{equation}
Consider the right-hand side of inequality (\ref{eq:A.21}) as a function of $r$:
\begin{equation*}
h(r)=\dfrac{p!}{2(p+\nu)Hr^{p+\nu-2}}g^{2}+\dfrac{Hp^{2}\Delta^{2}r^{p+\nu}}{2(p+\nu)p!}.
\end{equation*}
Since $\Delta^{2}>0$, $h$ is a convex function for $r>0$. Thus, let us find the optimal $r_{*}$ as a solution to the first-order optimality condition for function $h$:
\begin{equation*}
\dfrac{g^{2}(p+\nu-2)p!}{(p+\nu)Hr_{*}^{p+\nu-1}}=\dfrac{Hp^{2}\Delta^{2}r_{*}^{p+\nu-1}}{p!}.
\end{equation*}
Solving this equation for $r_{*}$, we obtain
$
r_{*}^{p+\nu-1}=\dfrac{g(p-1)!}{H\Delta}\sqrt{\dfrac{p+\nu-2}{p+\nu}}.
$
Consequently,
\begin{eqnarray*}
h(r_{*})& = &\dfrac{r_{*}}{2H(p+\nu)}\left[\dfrac{g^{2}p!}{r^{p+\nu-1}}+\dfrac{H^{2}p^{2}\Delta^{2}r_{*}^{p+\nu-1}}{p!}\right]\\
& = & \dfrac{(p+\nu-1)p\Delta^{\frac{p+\nu-2}{p+\nu-1}}}{(p+\nu)\sqrt{(p+\nu-2)(p+\nu)}}\left(\sqrt{\dfrac{p+\nu-2}{p+\nu}}\right)^{\frac{1}{p+\nu-1}}\left[\dfrac{(p-1)!}{H}\right]^{\frac{1}{p+\nu-1}}g^{\frac{p+\nu}{p+\nu-1}}
\end{eqnarray*}
Now, usinig (\ref{eq:A.22}) we obtain
\begin{eqnarray*}
h(r_{*})&\geq & \frac{(p+\nu-1)p}{(p+\nu)\sqrt{(p+\nu-2)(p+\nu)}}\left(\frac{\sqrt{(p+\nu-2)(p+\nu)}}{p+\nu-1}\right)^{\frac{p+\nu-2}{p+\nu-1}}\left(\sqrt{\frac{p+\nu-2}{p+\nu}}\right)^{\frac{1}{p+\nu-1}}\left[\dfrac{(p-1)!}{H}\right]^{\frac{1}{p+\nu-1}}g^{\frac{p+\nu}{p+\nu-1}}\nonumber\\
& = & \dfrac{(p+\nu-1)^{\frac{1}{p+\nu-1}}p}{(p+\nu)^{\frac{p+\nu}{p+\nu-1}}}\left[\dfrac{(p-1)!}{H}\right]^{\frac{1}{p+\nu-1}}g^{\frac{p+\nu}{p+\nu-1}}.
\end{eqnarray*}
Note that
\begin{eqnarray*}
\dfrac{(p+\nu-1)^{\frac{1}{p+\nu-1}}p}{(p+\nu)^{\frac{p+\nu}{p+\nu-1}}}&=&\dfrac{p(p+\nu-1)^{\frac{1}{p+\nu-1}}}{(p+\nu)(p+\nu)^{\frac{p+\nu}{p+\nu-1}}}\geq\left(\dfrac{p}{p+1}\right)\left(\dfrac{p-1}{p+1}\right) \\
%
&\geq & \dfrac{1}{3}.
\end{eqnarray*}
Thus,
$
h(r^{*})\geq\dfrac{1}{3}\left[\dfrac{(p-1)!}{H}\right]^{\frac{1}{p+\nu-1}}g^{\frac{p+\nu}{p+\nu-1}}
$
and so by (\ref{eq:A.21}) we get (\ref{eq:A.20}).
\end{proof}

Finally, for the case in which $\nu$ is unknown, the next lemma establishes that (4.6) is also achievable when the regularization parameter is sufficiently large.

\begin{lemma}
\label{lem:A.7}
Let $H_{f,p}(\nu)<+\infty$ for some $\nu\in [0,1]$, and assume that $x^{+}$ satisfies
\begin{equation}
\|\nabla\Omega_{\bar{x},p,H}^{(1)}(x^{+})\|_{*}\leq\theta\|x^{+}-\bar{x}\|^{p}
\label{eq:A.23}
\end{equation}
for some $\bar{x}\in\E$, $H>0$, and $\theta\geq 0$. If for some $\delta>0$ we have
\begin{equation}
\|\nabla f(x^{+})\|_{*}\geq\delta\quad\text{and}\quad H\geq\max\left\{C\theta (p-1)!,(CH_{f,p}(\nu))^{\frac{p}{p+\nu-1}}\left(\dfrac{4}{\delta}\right)^{\frac{1-\nu}{1+\nu}}\right\}
\label{eq:A.24}
\end{equation}
with $C\geq 4$, then
\begin{equation}
\la\nabla f(x^{+}),\bar{x}-x^{+} \ra\geq\dfrac{1}{4}\left[\dfrac{(p-1)!}{H}\right]^{\frac{1}{p}}\|\nabla f(x^{+})\|_{*}^{\frac{p+1}{p}}.
\label{eq:A.25}
\end{equation}
\end{lemma}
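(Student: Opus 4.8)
The plan is to reduce the statement to Lemma~\ref{lem:A.3} and then reproduce, with $\nu$ formally set to $1$, the minimization-over-$r$ argument used for the known-exponent case in Lemma~\ref{lem:A.6}. First I would observe that the hypotheses (\ref{eq:A.23})--(\ref{eq:A.24}) imply those of Lemma~\ref{lem:A.3}: since $C\geq 4$ and $(p-1)!\geq 1$ we have $C\theta(p-1)!\geq\theta$, so the conclusions (\ref{eq:A.6}) and (\ref{eq:A.7}) are available. Writing $r=\|x^{+}-\bar{x}\|$ and $g=\|\nabla f(x^{+})\|_{*}$, the gradient of the model is
\[
\nabla\Omega_{\bar{x},p,H}^{(1)}(x^{+})=\nabla\Phi_{\bar{x},p}(x^{+})+\frac{H(p+1)}{p!}r^{p-1}B(x^{+}-\bar{x}),
\]
so $\nabla f(x^{+})+\frac{H(p+1)}{p!}r^{p-1}B(x^{+}-\bar{x})=\left[\nabla f(x^{+})-\nabla\Phi_{\bar{x},p}(x^{+})\right]+\nabla\Omega_{\bar{x},p,H}^{(1)}(x^{+})$, whose dual norm is at most $\frac{H_{f,p}(\nu)}{(p-1)!}r^{p+\nu-1}+\theta r^{p}$ by the triangle inequality, (\ref{eq:2.5}), and (\ref{eq:A.23}).

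The key step is to absorb the H\"older term, which carries the ``wrong'' power $r^{p+\nu-1}$, into a multiple of $r^{p}$. Inequality (\ref{eq:A.6}) reads $H_{f,p}(\nu)\leq\frac{H}{C}r^{1-\nu}$, hence $\frac{H_{f,p}(\nu)}{(p-1)!}r^{p+\nu-1}\leq\frac{H}{C(p-1)!}r^{p}$, while $H\geq C\theta(p-1)!$ gives $\theta r^{p}\leq\frac{H}{C(p-1)!}r^{p}$; the dual norm above is therefore at most $\frac{2H}{C(p-1)!}r^{p}$. Squaring, expanding, and isolating the cross term (using $p!=p(p-1)!$) I obtain
\[
\la\nabla f(x^{+}),\bar{x}-x^{+}\ra\geq\frac{p!}{2(p+1)Hr^{p-1}}g^{2}+\frac{p^{2}\tilde{\Delta}^{2}H}{2(p+1)p!}r^{p+1},\qquad\tilde{\Delta}^{2}:=\frac{(p+1)^{2}}{p^{2}}-\frac{4}{C^{2}}.
\]
Because $C\geq 4$ we have $\tilde{\Delta}^{2}\geq\frac{(p+1)^{2}}{p^{2}}-\frac14\geq\frac34>0$, so the right-hand side is a positive, strictly convex function $h(r)$ on $r>0$ that is exactly of the form appearing in (\ref{eq:A.21}) with $\nu$ replaced by $1$ and $\Delta^{2}$ by $\tilde{\Delta}^{2}$.

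It then remains to minimize $h$, exactly as in Lemma~\ref{lem:A.6}: the stationary point $r_{*}$ satisfies $r_{*}^{2p}=\frac{(p-1)a}{(p+1)b}$ (with $a,b$ the two coefficients), and back-substitution yields $h(r_{*})=K\,[(p-1)!/H]^{1/p}g^{(p+1)/p}$, the exponents $\frac{p+1}{p}$ of $g$ and $-\frac1p$ of $H$ emerging automatically and matching (\ref{eq:A.25}). The main obstacle is the purely arithmetic verification that
\[
K=\frac{p^{2}\tilde{\Delta}^{(p-1)/p}}{(p+1)\sqrt{(p-1)(p+1)}}\left(\frac{p-1}{p+1}\right)^{1/(2p)}\geq\frac14\qquad\text{for all }p\geq 2,
\]
which is the analogue of the closing constant estimate of Lemma~\ref{lem:A.6}; here the lower bound $\tilde{\Delta}^{2}\geq\frac34$ plays the role that (\ref{eq:A.22}) did there, and the resulting factor $\frac14$ (rather than $\frac13$) reflects the factor-of-two lost in the bound $\frac{2H}{C(p-1)!}r^{p}$ together with the choice $C\geq4$. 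I would also reconcile the exponent $\frac{1-\nu}{1+\nu}$ printed in (\ref{eq:A.24}) with the $\frac{1-\nu}{p+\nu-1}$ demanded by Lemma~\ref{lem:A.3}; they coincide at $p=2$, and one must check that (\ref{eq:A.24}) still implies the hypothesis of Lemma~\ref{lem:A.3} for general $p$.
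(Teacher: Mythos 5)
You have reproduced the paper's own proof essentially step for step: the same decomposition of $\nabla f(x^{+})+\frac{H(p+1)}{p!}r^{p-1}B(x^{+}-\bar{x})$, the same absorption of the H\"{o}lder term $\frac{H_{f,p}(\nu)}{(p-1)!}r^{p+\nu-1}$ into $\frac{H}{C(p-1)!}r^{p}$ via (\ref{eq:A.6}) (which the paper invokes silently, you explicitly), the same squaring and isolation of the cross term yielding a convex function of $r$ with $\tilde{\Delta}^{2}\geq\frac{3}{4}$, and the same minimization over $r$ producing the constant $\frac{1}{4}$. The exponent discrepancy you flag in (\ref{eq:A.24}) is indeed a typo for $\frac{1-\nu}{p+\nu-1}$ (compare (\ref{eq:A.17}) and (\ref{eq:4.21})), so your reduction to Lemma \ref{lem:A.3} is exactly what the paper intends.
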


\begin{proof}
Denote $r=\|x^{+}-\bar{x}\|$. Then, by (\ref{eq:2.5}), (\ref{eq:2.7}), and (\ref{eq:A.23}) we have
\begin{eqnarray*}
\|\nabla f(x^{+})+\dfrac{H(p+1)}{p!}r^{p-1}B(x^{+}-\bar{x})\|_{*} & = &\|\nabla f(x^{+})-\nabla\Phi_{\bar{x},p}(x^{+})+\nabla\Omega_{\bar{x},p,H}^{(1)}(x^{+})\|_{*}\\
&\leq &\|\nabla f(x^{+})-\nabla\Phi_{\bar{x},p}(x^{+})\|_{*}+\|\nabla\Omega_{\bar{x},p,H}(x^{+})\|_{*}\\
&\leq &\dfrac{H_{f,p}(\nu)}{(p-1)!}r^{p+\nu-1}+\theta r^{p}\\
&  \leq  &\left(\dfrac{H}{C(p-1)!}+\theta\right)r^{p}
\end{eqnarray*}
Therefore,
\begin{eqnarray*}
\left(\dfrac{H}{C(p-1)!}+\theta\right)^{2}r^{2p}&\geq & \|\nabla f(x^{+})+\dfrac{H}{p!}r^{p-1}B(x^{+}-\bar{x})\|_{*}^{2}\\
& = & \|\nabla f(x^{+})\|_{*}^{2}+\dfrac{2(p+1)}{p!}Hr^{p-1}\la f^{(1)}(x^{+}),x^{+}-\bar{x}\ra \dfrac{H^{2}(p+1)^{2}}{(p!)^{2}}r^{2p},
\end{eqnarray*}
which gives
\begin{equation}
\la \nabla f(x^{+}),\bar{x}-x^{+}\ra \geq\dfrac{p!}{2(p+1)Hr^{p-1}}\|\nabla f(x^{+})\|_{*}^{2}+\dfrac{p!}{2H(p+1)}\left[\left(\dfrac{H(p+1)}{p!}\right)^{2}-\left(\dfrac{H}{C(p-1)!}+\theta\right)^{2}\right]r^{p+1}.
\label{eq:A.26}
\end{equation}
Since $H\geq C\theta(p-1)!$, it follows that
\begin{equation*}
\dfrac{p!}{2H(p+1)}\left[\left(\dfrac{H(p+1)}{p!}\right)^{2}-\left(\dfrac{H}{C(p-1)!}+\theta\right)^{2}\right]r^{p+1}=\dfrac{Hp!}{2(p+1)}\left[\left(\dfrac{p+1}{p!}\right)^{2}-\left(\dfrac{2}{C(p-1)!}\right)^{2}\right]^{2}
\end{equation*}
Because $C\geq 4$, we have
\begin{equation*}
-\left(\dfrac{2}{C(p-1)!}\right)^{2}\geq -\left(\dfrac{1}{2(p-1)!}\right)^{2}
\end{equation*}
and so
\begin{eqnarray*}
\dfrac{p!}{2H(p+1)}\left[\left(\dfrac{H(p+1)}{p!}\right)^{2}-\left(\dfrac{H}{C(p-1)!}+\theta\right)^{2}\right]r^{p+1}&\geq &\dfrac{Hp!}{2(p+1)}\left[\left(\dfrac{p+1}{p}\right)^{2}-\left(\dfrac{1}{2(p-1)!}\right)^{2}\right]\\
& \geq & \dfrac{3Hp^{2}}{8(p+1)!}.
\end{eqnarray*}
Therefore,
\begin{equation}
\la \nabla f(x^{+}),\bar{x}-x^{+}\ra\geq\dfrac{p!}{2H(p+1)r^{p-1}}\|\nabla f(x^{+})\|^{2}+\dfrac{3Hp^{2}}{8(p+1)!}r^{p+1}.
\label{eq:A.27}
\end{equation}
Denote $g=\|\nabla f(x^{+})\|$ and consider the right-hand side of (\ref{eq:A.26}) as a function of $r$:
\begin{equation*}
h(r)=\dfrac{p!}{2(p+1)Hr^{p-1}}g^{2}+\dfrac{3Hp^{2}r^{p+1}}{8(p+1)!}.
\end{equation*}
Let us find the optimal $r_{*}$ as a solution to the first-order optimality condition for function $h$:
\begin{equation*}
\dfrac{(p-1)g^{2}p!}{2(p+1)Hr_{*}^{p}}=\dfrac{3Hp^{2}(p+1)r_{*}^{p}}{8(p+1)!}=\dfrac{3Hp^{2}r^{p}}{8p!}.
\end{equation*}
Solving this equation for $r_{*}$, we obtain
\begin{equation*}
r_{*}^{p}=\dfrac{g(p-1)!}{H\Delta}\sqrt{\dfrac{8(p-1)}{3(p+1)}}.
\end{equation*}
Consequently,
\begin{eqnarray*}
h(r_{*})& = & r_{*}\left[\dfrac{gp}{2(p+1)}\sqrt{\dfrac{6(p+1)}{8(p-1)}}+\dfrac{3gp}{8(p+1)}\sqrt{\dfrac{8(p-1)}{6(p+1)}}\right]\\
        & \geq &\dfrac{3gp}{8(p+1)}\left[\dfrac{6(p+1)+8(p-1)}{\sqrt{[8(p-1)][6(p+1)]}}\right]\left[\dfrac{g(p-1)!}{H}\sqrt{\dfrac{8(p-1)}{6(p+1)}}\right]^{\frac{1}{p}}\\
        &\geq &\dfrac{1}{4}\left[\dfrac{(p-1)!}{H}\right]^{\frac{1}{p}}g^{\frac{p+1}{p}}.
\end{eqnarray*}

Therefore, (\ref{eq:A.25}) holds.
\end{proof}

%

\section*{Acknowledgments}
The authors are very grateful to the two anonymous referees, whose comments helped to improve the first version of this paper.

\end{document}